\documentclass[11pt]{amsart}

\usepackage[margin=1in]{geometry}
\usepackage{amsmath,amssymb,amsthm,mathtools}
\usepackage{enumitem}

\ifx\pdfpageheight\undefined
   \usepackage[dvips,colorlinks=true,linkcolor=blue,citecolor=red,%
      urlcolor=green]{hyperref}
   \usepackage[dvips]{graphicx}
   \makeatletter
   \edef\Gin@extensions{\Gin@extensions,.mps}
   \makeatother
\else
 \usepackage[pdftex]{graphicx}
   \usepackage[bookmarksopen=false,pdftex=true,breaklinks=true,%
      backref=page,pagebackref=true,plainpages=false,%
      hyperindex=true,pdfstartview=FitH,colorlinks=true,%
      pdfpagelabels=true,colorlinks=true,linkcolor=blue,%
      citecolor=red,urlcolor=green,hypertexnames=false%
      ]%
   {hyperref}
\fi

\theoremstyle{theorem}
\newtheorem{theorem}{Theorem}
\newtheorem{lemma}{Lemma}[section]
\newtheorem{proposition}{Proposition}[section]

\newtheorem{conjecture}{Conjecture}[section]

\theoremstyle{remark}
\newtheorem{remark}{Remark}[section]

\theoremstyle{definition}
\newtheorem{definition}{Definition}[section]
\newtheorem{notation}{Notation}[section]
\newtheorem{example}{Example}[section]

\newcommand{\R}{\mathbb{R}}
\newcommand{\im}{\operatorname{im}}

\newcommand{\Harm}{\mathcal{H}}
\newcommand{\argmin}{\operatorname*{arg\,min}}

\usepackage{amssymb,color,epsf}
\usepackage{mathtools}
\usepackage{enumitem}

\usepackage{bm}
\usepackage{amsmath}
\usepackage{tabularx}
\usepackage{color, colortbl}
\usepackage{url}
\usepackage{enumitem}

\DeclareMathAlphabet{\mathpzc}{OT1}{pzc}{m}{it}

\usepackage [cmtip,arrow]{xy}
\xyoption{all}
\usepackage {pb-diagram,pb-xy}
\usepackage[mathscr]{eucal}
\usepackage[utf8]{inputenc}
\usepackage[T1]{fontenc}
\usepackage[english]{babel}
\usepackage{bm}
\usepackage{tabularx}

\usepackage{bm}

\usepackage{blkarray, bigstrut}
\usepackage{gauss}

\newcommand{\sqbracket}[1]{[#1]}

\newcommand {\junk}[1]{}

\newcommand {\Z}  {\mathbb{Z}}

\newcommand {\la}   {{\langle}}
\newcommand {\ra}   {{\rangle}}

\newcommand {\Ker}      {\mathrm{Ker}}

\newcommand {\PP}     {\mathbb{P}} 

\newcommand{\card}{\mathrm{card}}

\def\addots{\mathinner{\mkern1mu
\raise1pt\vbox{\kern7pt\hbox{.}}
\mkern2mu\raise4pt\hbox{.}\mkern2mu
\raise7pt\hbox{.}\mkern1mu}}

\newcommand{\HH}  {\mbox{\rm H}}

\newcommand{\hide}[1]{}

\newcommand{\supp}{\mathrm{supp}}

\newcommand{\nc}{\newcommand}
\newcommand{\rc}{\renewcommand}
\nc{\mc}{\mathcal}
\rc{\t}{\text}
\nc{\op}[1]{\operatorname{#1}}
\nc{\opcat}[1]{\mathbf{#1}}

\nc{\id}{\op{id}}

\nc{\umutnote}[1]{{\marginpar{\small \textcolor{blue}{#1}}}}

\nc{\cA}{\mc{A}}\nc{\cB}{\mc{B}}\nc{\cC}{\mc{C}}\nc{\cD}{\mc{D}}\nc{\cE}{\mc{E}}\nc{\cF}{\mc{F}}\nc{\cG}{\mc{G}}\nc{\cH}{\mc{H}}\nc{\cI}{\mc{I}}\nc{\cJ}{\mc{J}}\nc{\cK}{\mc{K}}\nc{\cL}{\mc{L}}\nc{\cM}{\mc{M}}\nc{\cN}{\mc{N}}\nc{\cO}{\mc{O}}\nc{\cP}{\mc{P}}\nc{\cQ}{\mc{Q}}\nc{\cR}{\mc{R}}\nc{\cS}{\mc{S}}\nc{\cT}{\mc{T}}\nc{\cU}{\mc{U}}\nc{\cV}{\mc{V}}\nc{\cW}{\mc{W}}\nc{\cX}{\mc{X}}\nc{\cY}{\mc{Y}}\nc{\cZ}{\mc{Z}}

\rc{\PP}{\mathbb{P}}
\rc{\AA}{\mathbb{A}}
\nc{\bbC}{\mathbb{C}}
\nc{\CC}{\mathbb{C}}

\nc{\code}[1]{{\texttt{#1}}}
\nc{\mcode}[1]{{\text{\texttt{#1}}}}
\nc{\xto}[1]{\raisebox{-0.03cm}{\scalebox{0.85}{$\,\xrightarrow{#1}\,$}}}
\nc{\xtonormal}[1]{\xrightarrow{#1}}
\nc{\xfrom}[1]{\xleftarrow{#1}}
\nc{\sidenote}[1]{\marginpar{\small #1}}

\nc{\Aff}{\opcat{Aff}}
\nc{\AffVar}{\opcat{AffVar}}
\nc{\ProjVar}{\opcat{ProjVar}}
\nc{\GAP}{\opcat{GrAlgPairs}}
\nc{\GA}{\opcat{GrAlg}}

\nc{\acc}{\mathrm{a.c.c}}
\nc{\GL}{\mathrm{GL}}

\nc{\Mod}{\t{-}\opcat{Mod}}
\nc{\Sub}{\opcat{Sub}}
\nc{\iso}{\cong}
\nc{\compose}{\circ}

\newcommand{\FF}{\mathcal{F}}

\newcommand{\spn}{\mathrm{span}}

\newcommand{\bp}{\begin{proof}}
\newcommand{\ep}{\end{proof}}

\newcommand{\Rep}{\mathrm{Rep}}
\newcommand{\econt}{\mathrm{content}}

\begin{document}
\title[Essential Simplices Dominate in Harmonic Representatives]{Essential Simplices Dominate in Harmonic Representatives of One-Dimensional Persistent Classes}

\author{Saugata Basu}
\address{Department of Mathematics,
Purdue University, West Lafayette, IN 47906, U.S.A.}
\email{sbasu@math.purdue.edu}

\author{Aldo Guzmán-Sáenz}
\address{Independent Researcher}
\email{aldo.guzman.saenz@gmail.com}

\author{Laxmi Parida}
\address{IBM Research, Yorktown Heights}
\email{parida@us.ibm.com}

\date{}
\keywords{Harmonic persistent homology, essential simplices, harmonic weights, graph Laplacian}
\begin{abstract}
 Persistent homology summarizes 
   the birth and death of topological features, but it does not by itself specify where a feature is located in the underlying complex. Harmonic persistent homology addresses this by assigning canonical harmonic cycle representatives to bars. In earlier work, Basu and Cox \cite{Basu-Cox} showed that harmonic representatives of simple bars maximize the total relative weight placed on essential simplices, the simplices that are forced to appear in representatives of the corresponding class.
In this paper we prove that, for generic one-dimensional bars, this preference is stronger than an aggregate maximization statement. Every essential edge has strictly larger coefficient, in absolute value, than every non-essential edge in the harmonic representative, and the absolute values of the coefficients of all essential edges are equal.

The key argument is a finite-dimensional variational characterization of the harmonic representative as a minimum-norm chain with prescribed boundary, combined with an elementary graph-theoretic cut argument.
We then prove that the result is special to dimension one. In higher dimensions, the analogous coefficient-wise dominance statement fails.

We give examples to show that harmonic representatives can place larger coefficients on non-essential higher-dimensional simplices than on essential ones. These results clarify both the power and the limitations of using harmonic representatives to assign geometric significance to simplices in persistent homology. 
\end{abstract}

\maketitle

\tableofcontents
\section{Introduction}
\subsection{Persistent homology and barcodes}
The theory of persistent homology (see for example \cite{Edelsbrunner-Harer2010}) 
associates to any filtration of finite simplicial
complexes an object known as ``barcode'' of the filtration. 
The bars in a barcode correspond
loosely speaking to the lifetime of homology classes appearing in the homology of the 
simplicial complexes that appear in the filtration (here we are thinking of the ordered index set of the filtration
as time).
Precise definition (relevant to the current paper) is given below in Section~
\ref{subsec:precise}. To get an intuitive feeling for the definition of barcodes it may be instructive to study 
Example~\ref{eg:essential} and its accompanying
Figure~\ref{fig:essential}. 
Note however the important point that a new homology class that is ``born'' at a certain time  is defined only modulo a certain subspace in the homology of the complex at that time  -- thus identifying a bar with one particular homology class is problematic -- and no canonical identification exists.

\subsection{Associating cycles to bars}
Often in practice there is a demand to associate not just a homology class, but a specific 
\emph{cycle from the chain group representing this class} or at least a \emph{set of simplices} to each bar. 
This is because in applications
the simplices of the simplicial complexes of a filtration themselves  often have special significance. For instance, the vertices of a given simplicial complex could be labelled by
genes and a $p$-simplex $\sigma = (g_0,\ldots,g_p)$ may signify positive correlation between the genes 
$g_0,\ldots,g_p$ (say in causing a certain disease) (see for example
\cite{lockwood2014,Gurnari-et-al}).

There have been several approaches to the problem of
associating specific cycle representatives to persistent homology classes.
Most of these approaches involve minimization of some weight on the space of cycles representing a homology class.
For instance,
volume-optimal cycles were proposed in the non-persistent setting in \cite{Dey-2010} and in the setting of persistent homology in \cite{volume}. 

\subsection{Harmonic representatives of bars}
In \cite{Basu-Cox} a new
approach based on the theory of \emph{harmonic chains} was initiated. 
In that paper the authors consider homology groups with coefficients in $\R$ and impose an inner product on the chain
group to make the chain groups an Euclidean space.
As a result they are able to identify the various persistent 
homology groups, as well as the bars in the barcode of the given filtration, as subspaces of the  
simplicial chain groups themselves. 
Note that in contrast with ordinary persistent homology theory, one can associate canonically (only depending on the chosen inner product) a certain subspace
of the chain space to each bar. When the bar is of multiplicity one
this subspace is spanned by a single vector, and we have a uniquely defined (up to scalar multiplication)
cycle representing the bar --  such a cycle is called a \emph{harmonic representative} of the bar. 
The theory of \emph{harmonic persistence} has since found applications (see \cite{Gurnari-et-al}).
It has also led to a flurry of very interesting recent works \cite{hou2024,gulen2025grassmannianpersistencediagramsspecial} extending and reformulating the results in \cite{Basu-Cox} in several different directions.

\subsection{Essential simplices}
There are several reasons to consider harmonic representatives.
Instead of trying to optimize the length of the cycle the harmonic representatives 
put more relative weight on certain important simplices.
Since as remarked earlier, the simplices themselves in the simplicial complex underlying the filtration often have
domain dependent meaning --  if a particular simplex shows up with non-zero coefficient in \emph{every} cycle representing the homology class, then this fact may be considered
significant from an application point of view
(the lengths of representative cycles are not so significant
in these applications).
This idea was formalized in \cite{essential} where the notion
of \emph{essential} simplices corresponding to the bars of a barcode was introduced. Essential simplices (in dimension one) 
play a central role the application of topological data analysis to the study of population genomics. 
The notion of essential simplices is also central to the current paper. 
Informally, a simplex is essential relative to a bar, if it occurs with a non-zero coefficient in \emph{every}  cycle representing the bar. We will give 
a precise definition later taken from \cite{Basu-Cox}.

\subsection{Essential content}
\label{subsec:essential}
An important connection between the harmonicity of a representative
cycle and the set of essential simplices of a bar was established in \cite{Basu-Cox}.
If a bar in the barcode of a filtration
is of multiplicity one (this happens generically), then it is represented by a unique harmonic 
representative (unique up to multiplication by non-zero scalar). 
We define for each cycle,  
\[
z = \sum_{\sigma} c_\sigma \cdot \sigma,
\]
(not necessarily harmonic) representing any given simple bar the relative
essential content,
\[
\econt(z) = \left(\frac{\sum_{\sigma \mbox{ is essential}}  c_\sigma^2}{\sum_{\sigma} c_\sigma^2}\right)^{1/2},
\]
of the cycle 
which measures the relative weight in the cycle of the essential as opposed to the 
non-essential simplices. 

A key result proved in \cite{Basu-Cox}
is that the harmonic 
representatives of bars maximize (amongst all  representative cycles)
the relative essential content of the bar, 
i.e.
if $z_0$ is a harmonic representative of a simple bar $b$,
then for any cycle $z$  representing $b$,
\begin{equation}
\label{eqn:essential}
\econt(z) \leq \econt(z_0).
\end{equation}

Inequality~\eqref{eqn:essential} picks out the harmonic cycles as preferred
representatives of homology classes as they maximize the contributions of 
the essential simplices. 
We shift our focus in the current paper to the essential simplices themselves.

\subsection{Harmonic weights}
The numbers $|c_\sigma|$ (i.e. the absolute value of the coefficients of the 
simplex $\sigma$ in the harmonic representative of a bar) are called 
\emph{harmonic weights} in \cite{Gurnari-et-al}. 
Harmonic representatives maximize aggregate essential content \cite{Basu-Cox}. One might hope that the following stronger pointwise dominance property
\eqref{eqn:main} also holds.
Given a harmonic representative $h = \sum_{\sigma} c_\sigma \cdot \sigma$ of a simple bar $b$,
and any two simplices $\sigma_0$ and $\tau_0$ of the same dimension 
with $\tau_0 \neq \sigma_0$, and 
such that $\sigma_0$ is essential for the bar $b$,

\begin{equation}
\label{eqn:main}
|c_{\tau_0}| \leq |c_{\sigma_0}|
\end{equation}
with equality if and only if $\tau_0$ is also essential for the bar $b$.

This statement is made precise later (see Conjecture~\ref{conj:essential} below) after we have introduced the relevant definitions. The main purpose of the current paper is to investigate the conditions under which  inequality~\eqref{eqn:main} is valid. 
Surprisingly, as we show in this paper,
it is true only in dimension one and fails fundamentally in higher dimensions.

Note however that in many applications one-dimensional bars play the most important role. 
For example, they are used  to generate hypotheses in cancer genomics in \cite{Gurnari-et-al}, where
the weights in dimension one  played the most important role.
Indeed, essential simplices were introduced in \cite{essential} with the goal of applying topological data analysis techniques
to population genomics. Again bars of dimension one were the ones of most significance in this application. This makes the fact that inequality~\eqref{eqn:main} holds in dimension one
significant from the applications point of view.


\subsection{Precise definitions}
\label{subsec:precise}
We assume that the reader is already familiar with basic definitions of simplicial homology.  We fix below some notation.

\begin{definition}
\label{def:simplicial-complex}
A \emph{finite simplicial complex} $K$ is a set of ordered subsets of $[N] = \{0,\ldots N\}$ for some $N \geq 0$, such that if $\sigma \in K$ and $\tau$
is a subset of $\sigma$, then $\tau \in K$. 
\end{definition}

\begin{notation}
\label{def:p-simplices}
If $\sigma = \{i_0,\ldots,i_p\} \in K$, 
with $K$ a finite simplicial complex,
and $i_0 < \cdots < i_p $, we will denote $\sigma = [i_0,\ldots,i_p]$ and call $\sigma$ a \emph{$p$-dimensional simplex of $K$}. 
We will denote by $K^{(p)}$ the subcomplex of $K$ consisting of simplices of $K$ of dimension $\leq p$.
We will denote by $K^{\sqbracket{p}} = K^{(p)} \setminus K^{(p-1)}$ the subset of $p$-dimensional simplices of $K$.
Note that $K^{\sqbracket{p}}$ is not a subcomplex.
\end{notation}

\begin{definition}[Chain groups]
\label{def:chain-groups}
Suppose $K$ is a finite simplicial complex.
For $p \geq 0$,  we will denote by 
$C_p(K) = C_p(K;\R)$ (the $p$-th chain group),   
the $\R$-vector space generated by the elements of $K^{\sqbracket{p}}$,
i.e.
\[
C_p(K) = \bigoplus_{\sigma \in K^{\sqbracket{p}}}\R \cdot \sigma.
\]
\end{definition}

\begin{definition}[The boundary map]
\label{def:boundary-map}
We denote by $\partial_p(K): C_p(K) \rightarrow C_{p-1}(K)$ the 
linear map (called the $p$-th \emph{boundary map})  defined as follows. Since 
$\left(\sigma \right)_{\sigma \in K^{\sqbracket{p}}}$ is a basis of $C_p(K)$ it is enough to define the image of each $\sigma \in C_p(K)$. 
We define for $\sigma = [i_0,\ldots,i_p] \in K^{\sqbracket{p}}$,
$
\partial_p(K)(\sigma) = \sum_{0\leq j \leq p} (-)^j [i_0,\ldots, \widehat{i_j}, \ldots,i_p] \in C_{p-1}(K),
$
where $\widehat{\cdot}$ denotes omission. (If the value of $p$ is clear from context we will sometimes drop the subscript and denote $\partial_p$ by $\partial$.)
\end{definition}

\begin{notation}[Cycles, boundaries, homology and the canonical surjection]
We denote 
$
Z_p(K) = \Ker(\partial_p(K)),
$
(the space of \emph{$p$-dimensional cycles}),
$
B_p(K) = \mathrm{Im}(\partial_{p+1}(K))
$
(the space of \emph{$p$-dimensional boundaries}),
and
$\HH_p(K) = Z_p(K)/B_p(K)$
(the \emph{$p$-dimensional simplicial homology group} of $K$).
We will denote by 
\[
\phi_p(K):Z_p(K) \rightarrow Z_p(K)/B_p(K) =  \HH_p(K)
\]
the canonical surjection. 
(Unless stated otherwise all homology groups in this paper are taken with real coefficients).
\end{notation}

\subsection{Representing homology classes by harmonic chains} 
Let $K$ be a finite simplicial complex.
We make the chain group $C_p(K)$ into an Euclidean space by fixing an inner product $\la \cdot,\cdot\ra_{C_p(K)}$. 
For the rest of the paper we fix the following inner product on $C_p(K)$ which we will    
refer to as the standard inner product on $C_p(K)$.
We define:
\begin{equation}
\label{eqn:standard}
\la \sigma, \sigma'\ra_{C_p(K)} = \delta_{\sigma,\sigma'},  \sigma,   \sigma' \in K^{\sqbracket{p}}
\end{equation}
(i.e. we declare the basis  $\left(\sigma \right)_{\sigma \in K^{\sqbracket{p}}}$ to be an orthonormal basis).
If the context is clear we will omit the subscript from the notation  $\la \cdot,\cdot\ra_{C_p(K)}$.

We now come to a key definition -- namely, that of harmonic homology (following \cite{Eckmann}).
\begin{definition}[Harmonic homology subspace]
\label{def:harmonic}
For $p \geq 0$,  we will denote 
\[
\mathcal{H}_p(K) = Z_p(K)  \cap  B_p(K)^\perp
\]
and call $\mathcal{H}_p(K) \subset C_p(K)$ the \emph{harmonic homology subspace of $K$}. 
\end{definition}

\subsubsection{Elementary properties}
The following  propositions encapsulate the key properties of the harmonic homology subspaces. We denote by $\mathrm{proj}_W:V \rightarrow V$ the orthogonal projection on to a subspace $W$ of a vector space $V$.
\begin{proposition}
\label{prop:f}
The map  $\mathfrak{f}_p(K)$ defined by 
\begin{equation}
    \label{eqn:f}
    z + B_p(K) \rightarrow \mathrm{proj}_{B_p(K)^\perp}(z), z \in Z_p(K)
\end{equation}
gives an isomorphism
$
\mathfrak{f}_p(K): \HH_p(K)  \rightarrow \mathcal{H}_p(K).
$
\end{proposition}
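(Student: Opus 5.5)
The plan is to verify three things in order: that $\mathfrak{f}_p(K)$ is well defined, that it lands in $\mathcal{H}_p(K)$, and that it is a linear bijection. Throughout, $C_p(K)$ is a finite-dimensional Euclidean space, so orthogonal projections and complements behave as usual. The one structural fact needed is $B_p(K) \subset Z_p(K)$, which holds because $\partial_p \circ \partial_{p+1} = 0$.

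\emph{Well-definedness and image.} If $z - z' \in B_p(K)$, then $\mathrm{proj}_{B_p(K)^\perp}(z - z') = 0$, so the value of $\mathfrak{f}_p(K)$ depends only on the class $z + B_p(K)$. Linearity is immediate because projection is linear. To see that the image lies in $\mathcal{H}_p(K)$, write
\[
z = \mathrm{proj}_{B_p(K)}(z) + \mathrm{proj}_{B_p(K)^\perp}(z).
\]
The first summand lies in $B_p(K) \subset Z_p(K)$, so $\mathrm{proj}_{B_p(K)^\perp}(z) = z - \mathrm{proj}_{B_p(K)}(z)$ is a difference of two cycles and hence a cycle. It also lies in $B_p(K)^\perp$ by construction, so it belongs to $Z_p(K) \cap B_p(K)^\perp = \mathcal{H}_p(K)$.

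\emph{Injectivity.} Suppose $\mathrm{proj}_{B_p(K)^\perp}(z) = 0$. Then $z \in B_p(K)$, so its class $z + B_p(K)$ is zero in $\HH_p(K)$.

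\emph{Surjectivity.} Take any $h \in \mathcal{H}_p(K)$. Then $h \in Z_p(K)$, and since $h \in B_p(K)^\perp$ the projection fixes it, so $\mathfrak{f}_p(K)(h + B_p(K)) = h$. This also gives $\dim \mathcal{H}_p(K) = \dim Z_p(K) - \dim B_p(K)$, because $Z_p(K) = B_p(K) \oplus (Z_p(K) \cap B_p(K)^\perp)$ when $B_p(K) \subset Z_p(K)$.

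There is no real obstacle here. The only step that needs care is checking that the projection of a cycle is again a cycle, and that rests entirely on $B_p(K) \subset Z_p(K)$. Everything else is finite-dimensional linear algebra.
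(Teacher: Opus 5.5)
Your proof is correct and complete. The paper gives no argument of its own here and simply cites \cite{Basu-Cox}; your verification rests on $B_p(K)\subset Z_p(K)$ and the resulting orthogonal splitting $Z_p(K)=B_p(K)\oplus\mathcal{H}_p(K)$, which is the standard elementary argument for this fact.
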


\begin{proof}
See \cite{Basu-Cox}.
\end{proof}

\begin{proposition}
\label{prop:proj}
For all $z \in Z_p(K)$,
    \[
    \mathfrak{f}_p(K) \circ \phi_p(K)(z)  = \mathrm{proj}_{\mathcal{H}_p(K)}(z).
    \]
\end{proposition}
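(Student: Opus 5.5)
The plan is to write $\mathrm{proj}_{\mathcal{H}_p(K)}(z)$ and $\mathrm{proj}_{B_p(K)^\perp}(z)$ in terms of an orthogonal decomposition of $Z_p(K)$ and compare them directly. By Proposition~\ref{prop:f}, $\mathfrak{f}_p(K)\circ\phi_p(K)(z) = \mathrm{proj}_{B_p(K)^\perp}(z)$ for every $z \in Z_p(K)$. So it suffices to show that, for a cycle $z$, projecting onto $B_p(K)^\perp$ gives the same vector as projecting onto $\mathcal{H}_p(K) = Z_p(K)\cap B_p(K)^\perp$.

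First I will record the inclusion $B_p(K) \subset Z_p(K)$, which holds because $\partial_p\circ\partial_{p+1}=0$. Taking orthogonal complements inside the finite-dimensional Euclidean space $Z_p(K)$, with the inner product restricted from $C_p(K)$, gives the orthogonal direct sum
\[
Z_p(K) = B_p(K) \oplus \bigl(Z_p(K)\cap B_p(K)^\perp\bigr) = B_p(K)\oplus \mathcal{H}_p(K).
\]
This is the key structural step. Given $z \in Z_p(K)$, I write $z = b + h$ with $b \in B_p(K)$ and $h \in \mathcal{H}_p(K)$.

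Next I compute both projections of $z$ from this decomposition. Since $h \in B_p(K)^\perp$ and $b \in B_p(K)$, uniqueness of the orthogonal decomposition $C_p(K) = B_p(K)\oplus B_p(K)^\perp$ gives $\mathrm{proj}_{B_p(K)^\perp}(z) = h$. On the other hand, $b \in B_p(K)$ is orthogonal to $\mathcal{H}_p(K)$, because $\mathcal{H}_p(K) \subset B_p(K)^\perp$, and $h \in \mathcal{H}_p(K)$. Hence $z - h = b$ lies in $\mathcal{H}_p(K)^\perp$, and so $\mathrm{proj}_{\mathcal{H}_p(K)}(z) = h$ as well. Combining the two computations yields
\[
\mathfrak{f}_p(K)\circ\phi_p(K)(z) = \mathrm{proj}_{\mathcal{H}_p(K)}(z).
\]

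There is no real obstacle here. The only point needing care is that $B_p(K) \subset Z_p(K)$; this is what makes the $B_p(K)^\perp$-component of a cycle itself a cycle, and hence an element of $\mathcal{H}_p(K)$.
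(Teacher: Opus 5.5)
Your proof is correct and takes the same route as the paper, which just says the result follows immediately from Proposition~\ref{prop:f}. You supply the detail the paper leaves out: the orthogonal decomposition $Z_p(K) = B_p(K)\oplus\mathcal{H}_p(K)$, which shows that $\mathrm{proj}_{B_p(K)^\perp}$ and $\mathrm{proj}_{\mathcal{H}_p(K)}$ agree on cycles.
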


\begin{proof}
    Follows immediately from Proposition~\ref{prop:f}. 
\end{proof}

\begin{proposition}
\label{prop:intersection}
$
\mathcal{H}_p(K) = \Ker(\partial_{p+1}(K)^*) \cap \Ker(\partial_{p}(K))
$
(where $L^*$ denotes the adjoint of a linear map $L$ between two inner product spaces).
\end{proposition}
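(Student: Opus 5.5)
The plan is to rewrite the orthogonal complement $B_p(K)^\perp$ as a kernel, and then intersect with $Z_p(K)$. By Definition~\ref{def:harmonic}, $\mathcal{H}_p(K) = Z_p(K)\cap B_p(K)^\perp$, and by definition $Z_p(K)=\Ker(\partial_p(K))$. So it suffices to prove the single identity
\[
B_p(K)^\perp = \Ker(\partial_{p+1}(K)^*),
\]
where the adjoint $\partial_{p+1}(K)^*: C_p(K)\rightarrow C_{p+1}(K)$ is taken with respect to the standard inner products \eqref{eqn:standard} on $C_p(K)$ and $C_{p+1}(K)$. Once this holds, the proposition follows by intersecting both sides with $\Ker(\partial_p(K))$.

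The identity is the standard fact that the orthogonal complement of an image is the kernel of the adjoint, and I would prove it directly from the defining property of the adjoint. Write $\partial=\partial_{p+1}(K)$ and fix $c\in C_p(K)$.
\begin{itemize}
\item Suppose $c\in B_p(K)^\perp = (\mathrm{Im}\,\partial)^\perp$. Then for every $d\in C_{p+1}(K)$,
\[
\la \partial^* c, d\ra_{C_{p+1}(K)} = \la c, \partial d\ra_{C_p(K)} = 0 .
\]
Choosing $d=\partial^* c$ gives $\|\partial^* c\|^2=0$, so $\partial^* c=0$.
\item Conversely, suppose $\partial^* c=0$. Then for every $d$, $\la c,\partial d\ra = \la \partial^* c, d\ra = 0$, so $c\perp \mathrm{Im}\,\partial = B_p(K)$.
\end{itemize}
Both steps use only that the spaces are finite-dimensional real inner product spaces, so the adjoint exists. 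Concretely, in the orthonormal simplex bases, $\partial^*$ is represented by the transpose of the boundary matrix.

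No step here is a real obstacle. The only care needed is bookkeeping: the adjoint must be taken with respect to the same inner products used to define $B_p(K)^\perp$, namely the standard ones fixed in \eqref{eqn:standard}. For $p=0$ one reads $\partial_0(K)=0$, so $Z_0(K)=C_0(K)$ and the statement stays consistent. The same argument works for any choice of inner products on the chain groups, provided $\partial_{p+1}(K)^*$ is defined using those inner products.
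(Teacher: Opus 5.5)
Your proof is correct. You reduce to $B_p(K)^\perp = \Ker(\partial_{p+1}(K)^*)$ via the adjoint relation (taking $d=\partial^* c$ to get $\partial^* c = 0$), then intersect with $Z_p(K)=\Ker(\partial_p(K))$. The paper does not prove this itself but refers to Basu--Cox, and your self-contained argument is the standard one and fully suffices.
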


\begin{proof}
See \cite{Basu-Cox}.
\end{proof}

\begin{remark}
\label{rem:harmonic}
The harmonic homology group
$\mathcal{H}_p(K)$ as defined above is
equal to the kernel of the linear map 
$\Delta_p = \partial_{p+1}\circ \partial_{p+1}^* + \partial_p^*\circ \partial_p$.
The linear map $\Delta_p(K):C_p(K) \rightarrow C_p(K)$ is a discrete analog of the Laplace operator and 
thus it makes sense to call its kernel the space of harmonic cycles.
\end{remark}

\subsubsection{Functoriality of the maps $\mathfrak{f}_p(K)$ under inclusion}
Now suppose $K_1 \subset K_2$ are sub-complexes of the finite simplicial complex $K$. 
Then, $C_p(K_1)$ is a subspace of $C_p(K_2)$. 

\begin{proposition}
\label{prop:functorial}
The restriction of $\mathrm{proj}_{B_p(K_2)^\perp}$ to $\mathcal{H}_p(K_1)$ gives a linear map
\[
\mathfrak{i}_p= \mathrm{proj}_{B_p(K_2)^\perp}|_{\mathcal{H}_p(K_1)} : \mathcal{H}_p(K_1) \rightarrow \mathcal{H}_p(K_2),
\]
which makes the following diagram commute
\[
\xymatrix{
\HH_p(K_1) \ar[r]^{i_p} \ar[d]^{\mathfrak{f}_p(K_1)} & \HH_p(K_2) \ar[d]^{\mathfrak{f}_p(K_2)} \\
\mathcal{H}_p(K_1) \ar[r]^{\mathfrak{i}_p} &\mathcal{H}_p(K_2)
}
\]
where $i_p: \HH_p(K_1) \rightarrow \HH_p(K_2)$ is the map induced by the inclusion $K_1 \hookrightarrow K_2$.
\end{proposition}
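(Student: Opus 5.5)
We need to prove Proposition~\ref{prop:functorial}. Two things have to be shown: that $\mathfrak{i}_p$ actually lands in $\mathcal{H}_p(K_2)$, and that the square commutes.

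\emph{Step 1: the map is well defined.} Let $h \in \mathcal{H}_p(K_1)$. By definition $h \in Z_p(K_1)$, and since $C_p(K_1) \subset C_p(K_2)$ with the boundary map of $K_2$ restricting to that of $K_1$, we also have $h \in Z_p(K_2)$. Next, $B_p(K_2) \subset Z_p(K_2)$ because $\partial_p \circ \partial_{p+1} = 0$. So we can write
\[
h = b + \mathrm{proj}_{B_p(K_2)^\perp}(h), \qquad b \in B_p(K_2) \subset Z_p(K_2).
\]
Hence $\mathrm{proj}_{B_p(K_2)^\perp}(h) = h - b$ is a cycle of $K_2$ orthogonal to $B_p(K_2)$, that is, it lies in $\mathcal{H}_p(K_2)$. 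Linearity of $\mathfrak{i}_p$ is inherited from the orthogonal projection. One point needs care here: the inner product on $C_p(K_1)$ must be the restriction of the one on $C_p(K_2)$. This holds for the standard inner product \eqref{eqn:standard}, since both are restrictions of the standard inner product on $C_p(K)$.

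\emph{Step 2: commutativity.} Take a class $z + B_p(K_1) \in \HH_p(K_1)$ with $z \in Z_p(K_1)$. Going down then right gives $\mathfrak{i}_p(\mathrm{proj}_{B_p(K_1)^\perp}(z))$. Set $h = \mathrm{proj}_{B_p(K_1)^\perp}(z)$; then $z - h \in B_p(K_1) \subset B_p(K_2)$. Going right then down gives $\mathfrak{f}_p(K_2)(z + B_p(K_2)) = \mathrm{proj}_{B_p(K_2)^\perp}(z)$. Since $\mathrm{proj}_{B_p(K_2)^\perp}$ kills $B_p(K_2)$, and $z - h \in B_p(K_2)$,
\[
\mathrm{proj}_{B_p(K_2)^\perp}(z) = \mathrm{proj}_{B_p(K_2)^\perp}(h) = \mathfrak{i}_p(h),
\]
so the two composites agree.

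The only real subtlety is the inclusion $B_p(K_1) \subset B_p(K_2)$ together with the compatibility of the inner products; both are immediate for subcomplexes of $K$ with the standard inner product. Well-definedness of $\mathfrak{f}_p$ on classes is already part of Proposition~\ref{prop:f}, so no further obstacle remains.
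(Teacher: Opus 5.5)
Your proof is correct and complete. The paper gives no argument for this proposition and only cites \cite{Basu-Cox}; your direct verification is the natural one, using $Z_p(K_1)\subset Z_p(K_2)$, the fact that projecting off $B_p(K_2)\subset Z_p(K_2)$ keeps a cycle a cycle, and $B_p(K_1)\subset B_p(K_2)$ for commutativity, with compatible inner products noted.
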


\begin{proof}
    See \cite{Basu-Cox}.
\end{proof}

\subsection{Persistent homology and barcodes}
\label{subsec:persistent}
There are many equivalent ways of defining barcodes of filtrations
(see for example the book \cite{Edelsbrunner-Harer2010}).
We use in this paper the definitions introduced in \cite{Basu-Karisani}, since they allow us to define harmonic analogs of barcodes in a seamless manner.

Let $N > 0$, and $T$ denote the ordered set $[0,N]$. 
Let
$\mathcal{F} = (K_t)_{t \in T}$,  be a tuple of sub-complexes of a  finite simplicial complex $K$, such that
$s \leq t \Rightarrow K_s \subset K_t$.    We call $\mathcal{F}$ a filtration
of the  simplicial complex $K$.

\begin{notation}
\label{not:inclusion}

  For $s, t \in T,   s \leq t$, and $p \geq 0$, we let $i_p^{s, t} : \HH_p (K_s) \longrightarrow
  \HH_p (K_t)$, denote the homomorphism induced by the inclusion $K_s
  \hookrightarrow K_t$.
\end{notation}

\begin{definition}[Persistent homology groups]
\label{def:persistent}
 For each triple $(p, s, t)  \in \Z_{\geq 0} \times T  \times T$ with 
$s \leq t$ 
  the   
  \emph{persistent homology  group},       
  $\HH_p^{s, t} (\mathcal{F})$ is defined by
  \begin{eqnarray*}
    \HH_p^{s, t} (\mathcal{F}) & = &  \mathrm{Im} (i_p^{s, t}).
  \end{eqnarray*}
  Note that $\HH_p^{s, t} (\mathcal{F}) \subset \HH_p (K_t)$,    
  and 
  $\HH_p^{s, s}(\mathcal{F}) = \HH_p (K_s)$.
\end{definition}

\subsubsection{Barcodes of filtrations}
The following definitions are taken from \cite{Basu-Karisani} (see also the references therein and \cite[Theorem 1]{Ghrist-Henselman}).
We follow  the same notation as above and first define certain subspaces of the homology
groups $\HH_p(K_s), s \in T, p \geq 0$.

\begin{definition}
\label{def:barcode}
Suppose $T = [0,N] \subset \mathbb{N}$. Then for
$0 \leq s < t \leq N$, 
and $p \geq 0$,
\begin{eqnarray*}
M^{s,t}_p(\mathcal{F}) &=& (i^{s,t}_p)^{-1}(\HH^{s-1,t}_p(\mathcal{F})), 
\end{eqnarray*}
and
\begin{eqnarray*}
P^{s,t}_p(\mathcal{F}) &=& M^{s,t}_p(\mathcal{F})/M^{s,t-1}_p(\mathcal{F}), \\
P^{s,\infty}_p(\mathcal{F}) &=&  
\HH_p(K_s) / M^{s,N}_p(\mathcal{F}).
\end{eqnarray*}
\end{definition}

\begin{definition}[Persistent multiplicity, barcode, simple bars]
\label{def:barcode2}
We will denote for $s \in T,  t \in T \cup \{\infty \}$,
\begin{equation}
\label{eqn:def:barcode:multiplicity}
\mu^{s,t}_p(\mathcal{F}) = \dim P^{s,t}_p(\mathcal{F}),
\end{equation}
and call $\mu^{s,t}_p(\mathcal{F})$ the \emph{persistent multiplicity of $p$-dimensional cycles born at time $s$ and dying at time $t$ if $t \neq \infty$,   or never dying in case $t = \infty$}.

Finally, we will call the set
\[
\mathbf{B}_p(\mathcal{F}) = \{(s,t;\mu^{s,t}_p(\mathcal{F})) \mid \mu^{s,t}_p(\mathcal{F}) > 0\}
\]
\emph{the $p$-dimensional barcode associated to the filtration $\mathcal{F}$}. 
\end{definition}

\begin{definition}[Simple bars]
\label{def:simple}
We call
$b = (s,t;\mu^{s,t}_p(\mathcal{F})) \in \mathbf{B}_p(\mathcal{F})$ a 
\emph{bar of $\mathcal{F}$ of multiplicity $\mu^{s,t}_p(\mathcal{F})$}. 
If $\mu^{s,t}_p(\mathcal{F})=1$, we  call $b$ a \emph{simple} bar.
\end{definition}

We need the following more stringent property on the bars of a barcode introduced in \cite{Basu-Cox} 
 (that is generically satisfied in 
filtrations arising from taking sub-level complexes of admissible functions
on finite simplicial complexes).

\begin{definition}[Generic bars] \cite{Basu-Cox}
\label{def:generic}
 We will say that a bar ${b} = (s,t;\mu^{s,t}_p(\mathcal{F})) \in \mathbf{B}_p(\mathcal{F})$ is \emph{generic} if it satisfies the following two conditions.
\begin{enumerate}
    \item  \label{itemlabel:generic:1}
    $b$ is simple, i.e. $\mu^{s,t}_p(\mathcal{F}) =1$;
    \item 
    \label{itemlabel:generic:2}
    for every $t' \in T \cup \{\infty\},  t' > s, t' \neq t$, $\mu^{s,t'}_p(\mathcal{F}) = 0$ (so no other bar in $\mathbf{B}_p(\mathcal{F})$ has birth time $s$).
\end{enumerate}
\end{definition}

\begin{remark}
    All bars appearing in the barcode of a filtration induced by generic admissible maps on a simplicial complex $K$ will be generic \cite{Basu-Cox}. So this assumption is not very restrictive.
\end{remark}

\subsection{Harmonic persistent homology and harmonic barcodes}
\label{subsec:harmonic-peristent-homology}
We now define the harmonic versions of persistent homology groups. These will all be subspaces
of $C_p(K), p \geq 0$.

Using Proposition~\ref{prop:functorial} we have for each $s,t \in T,s \leq t$, a linear map
\[
\mathfrak{i}_p^{s,t} := \mathrm{proj}_{B_p(K_t)^\perp}|_{\mathcal{H}_p(K_s)} : \mathcal{H}_p(K_s) \rightarrow \mathcal{H}_p(K_t),
\]

which makes the following diagram commute
\begin{equation}
\label{eqn:commutative}
\xymatrix{
\HH_p(K_s) \ar[r]^{i_p^{s,t}} \ar[d]^{\mathfrak{f}_p(K_s)} & \HH_p(K_t) \ar[d]^{\mathfrak{f}_p(K_t)} \\
\mathcal{H}_p(K_s) \ar[r]^{\mathfrak{i}_p^{s,t}} &\mathcal{H}_p(K_t)
}.
\end{equation}

\begin{definition}[Harmonic persistent homology subspaces]
\label{def:harmonic-persistent}
For each triple $(p, s, t)  \in \Z_{\geq 0} \times T  \times T$ with 
$s \leq t$ 
  the   
  \emph{harmonic persistent homology  subspace},       
  $\mathcal{H}_p^{s, t} (\mathcal{F})$ is defined by
  \begin{eqnarray*}
    \mathcal{H}_p^{s, t} (\mathcal{F}) & = &  \mathrm{Im} (\mathfrak{i}_p^{s, t}(\mathcal{F})) \subset C_p(K).
  \end{eqnarray*}
\end{definition}

\subsubsection{Harmonic barcodes of  filtrations}
We now give the harmonic analogs of the above spaces. They are all subspaces of $C_p(K)$ (in fact, of 
the various harmonic homology spaces $\mathcal{H}_p(K_s), s \in T$).

\begin{definition}[Harmonic barcode of a filtration]
\label{def:harmonic-barcode}
For $s \leq t$, and $p \geq 0$, we define
for
$0 \leq s < t \leq N$, 
and $p \geq 0$,
\begin{eqnarray*}
\mathcal{M}^{s,t}_p(\mathcal{F}) &=& (\mathfrak{i}^{s,t}_p)^{-1}(\mathcal{H}^{s-1,t}_p(\mathcal{F})), 
\end{eqnarray*}
and 
\begin{eqnarray*}
\mathcal{P}^{s,t}_p(\mathcal{F}) &=& 
\mathcal{M}^{s,t}_p(\mathcal{F})\cap\mathcal{M}^{s,t-1}_p(\mathcal{F})^\perp,\\ 
\mathcal{P}^{s,\infty}_p(\mathcal{F}) &=&  
\mathcal{H}_p(K_s) \cap \mathcal{M}^{s,N}_p(\mathcal{F})^\perp.
\end{eqnarray*}
\end{definition}

The vector spaces defined in Definition~\ref{def:harmonic-barcode} are isomorphic to
the corresponding ones in Definitions~\ref{def:barcode}.
Following the same notation as in Definition~\ref{def:harmonic-barcode} we have the 
following proposition.

\begin{proposition}
\label{prop:harmonic-barcode}
The map $\mathfrak{f}_p(K_s)$ induces isomorphisms:
\begin{eqnarray*}
\label{eqn:prop:harmonic-barcode:1}
\mathcal{M}^{s,t}_p(\mathcal{F}) &\cong & {M}^{s,t}_p(\mathcal{F}), \\ 
\label{eqn:prop:harmonic-barcode:3}
\mathcal{P}^{s,t}_p(\mathcal{F}) &\cong& {P}^{s,t}_p(\mathcal{F}), \\
\label{eqn:prop:harmonic-barcode:4}
\mathcal{P}^{s,\infty}_p(\mathcal{F}) &\cong&  {P}^{s,\infty}_p(\mathcal{F}).
\end{eqnarray*}
\end{proposition}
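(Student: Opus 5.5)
The plan is to transport everything along the isomorphism $\mathfrak{f}_p(K_s)\colon \HH_p(K_s)\to \mathcal{H}_p(K_s)$ of Proposition~\ref{prop:f}, using the commutative square \eqref{eqn:commutative} for the pairs $(s,t)$ and $(s-1,t)$. I will also use that the inclusion maps compose: $i_p^{s,t}\circ i_p^{s-1,s}=i_p^{s-1,t}$, and likewise $\mathfrak{i}_p^{s,t}\circ \mathfrak{i}_p^{s-1,s}=\mathfrak{i}_p^{s-1,t}$. The harmonic version follows from the ordinary one by conjugating with the isomorphisms $\mathfrak{f}_p$, since those squares commute.

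\textbf{Step 1.} First I would show that $\mathfrak{f}_p(K_t)$ maps $\HH_p^{s-1,t}(\mathcal{F})$ isomorphically onto $\mathcal{H}_p^{s-1,t}(\mathcal{F})$. This is immediate from \eqref{eqn:commutative} for the pair $(s-1,t)$: we have $\mathfrak{f}_p(K_t)\circ i_p^{s-1,t}=\mathfrak{i}_p^{s-1,t}\circ \mathfrak{f}_p(K_{s-1})$, and $\mathfrak{f}_p(K_{s-1})$ is onto. The same argument applies to $\mathrm{Im}(i_p^{s,t})$.

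\textbf{Step 2.} Next I would prove $\mathfrak{f}_p(K_s)(M^{s,t}_p(\mathcal{F}))=\mathcal{M}^{s,t}_p(\mathcal{F})$. Take $x\in \HH_p(K_s)$ and set $h=\mathfrak{f}_p(K_s)(x)$. Then $\mathfrak{i}_p^{s,t}(h)=\mathfrak{f}_p(K_t)(i_p^{s,t}(x))$. By Step 1 and the injectivity of $\mathfrak{f}_p(K_t)$, this lies in $\mathcal{H}_p^{s-1,t}(\mathcal{F})$ if and only if $i_p^{s,t}(x)\in \HH_p^{s-1,t}(\mathcal{F})$. So taking preimages commutes with the isomorphisms, which gives the first isomorphism, including the case $t-1$ in place of $t$.

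\textbf{Step 3.} For $\mathcal{P}^{s,t}_p(\mathcal{F})$, note that $M^{s,t-1}_p\subset M^{s,t}_p$, since anything dying by $t-1$ also dies by $t$. Hence $\mathcal{M}^{s,t-1}_p\subset \mathcal{M}^{s,t}_p$ as subspaces of the Euclidean space $\mathcal{H}_p(K_s)$. The orthogonal complement $\mathcal{M}^{s,t}_p\cap(\mathcal{M}^{s,t-1}_p)^\perp$ is a complement of $\mathcal{M}^{s,t-1}_p$ inside $\mathcal{M}^{s,t}_p$. It therefore maps isomorphically onto the quotient $\mathcal{M}^{s,t}_p/\mathcal{M}^{s,t-1}_p$ under the quotient map.

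Composing with the isomorphism $\mathcal{M}^{s,t}_p/\mathcal{M}^{s,t-1}_p\cong M^{s,t}_p/M^{s,t-1}_p$ from Step 2 gives $\mathcal{P}^{s,t}_p\cong P^{s,t}_p$. The case $t=\infty$ is identical, using $\mathcal{H}_p(K_s)$ and $\mathcal{M}^{s,N}_p$.

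\textbf{Main obstacle.} The only delicate point is the containment $M^{s,t-1}_p\subset M^{s,t}_p$, and the corresponding harmonic statement. This needs $i_p^{t-1,t}(\HH_p^{s-1,t-1})\subset \HH_p^{s-1,t}$, which follows from functoriality. Step 2 then transfers it to the harmonic side. The induced map on $\mathcal{P}$ is an isomorphism, but it need not literally be the restriction of $\mathfrak{f}_p(K_s)$ applied to a chosen complement; that is why the statement says ``induces''.
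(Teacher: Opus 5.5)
Your proof is correct. The paper states this proposition without proof (it is imported from Basu--Cox), and the intended argument is the one you give: transport the preimage and quotient constructions along the isomorphisms $\mathfrak{f}_p$ using the commutative square \eqref{eqn:commutative}, then identify $\mathcal{M}^{s,t}_p\cap(\mathcal{M}^{s,t-1}_p)^\perp$ with the quotient $\mathcal{M}^{s,t}_p/\mathcal{M}^{s,t-1}_p$.
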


In analogy with Definition~\ref{def:barcode2} we now define harmonic barcodes of filtrations. 
\begin{definition}[Harmonic barcodes]
\label{def:harmonic-barcode2}
We will call the set
\[
\mathbf{\mathcal{B}}_p(\mathcal{F}) = \{(s,t;\mathcal{P}^{s,t}_p(\mathcal{F})) \mid  \mathcal{P}^{s,t}_p(\mathcal{F}) \neq  0\}
\]
\emph{the $p$-dimensional harmonic barcode associated to the filtration $\mathcal{F}$}.
For ${b} = (s,t;\mu^{s,t}_p(\mathcal{F})) \in {B}_p(\mathcal{F})$, we will call the subspace
$\mathcal{P}^{s,t}_p(\mathcal{F}) \subset \mathcal{H}_p(K_s)$, the harmonic homology subspace associated to
$b$.
\end{definition}

\begin{definition}[Harmonic representative]
\label{def:harmonic-rep}
Given a simple bar $b = (s,t,1)$ we will call any non-zero cycle in $\mathcal{P}^{s,t}_p(\mathcal{F})$
\emph{a harmonic representative of $b$}.
\end{definition}

\subsection{Essential simplices
}
\label{subsec:essential}
We now make precise the notion of essential simplices referred to previously.

Let $K$ be a finite simplicial complex and 
$\mathcal{F} = (K_t)_{t \in T}$
denote a filtration
of $K$.

Let $\phi_p^s = \phi_p(K_s): Z_p(K_s) \rightarrow \HH_p(K_s)$ be the canonical surjection. 
For every $s,t\in T, s < t$ we denote,
\begin{eqnarray}
\label{eqn:tildeM}
\widetilde{M}_p^{s,t}(\mathcal{F}) &=& (\phi_p^s)^{-1}(M_p^{s,t}(\mathcal{F})).
\end{eqnarray}

\begin{definition}[Support of a chain]
\label{def:support}
For $z = \sum_{\sigma \in K^{\sqbracket{p}}}c_\sigma \cdot \sigma \in C_p(K)$, we denote 
$\supp(z) = \{\sigma \in K^{\sqbracket{p}} \mid c_\sigma \neq 0\}$, and call $\supp(z)$ the
\emph{support of $z$}.
\end{definition}


\begin{definition}
\label{def:rep}
Let $b = (s,t; 1) \in \mathbf{B}_p(\mathcal{F})$ be a simple bar
of $\mathcal{F}$ (see Definition~\ref{def:barcode2}).
We define
\begin{eqnarray*}
\Rep(b) &=&  Z_p(K_s) \setminus 
\widetilde{M}^{s,N}_p(\mathcal{F})
\mbox{ if $t = \infty$} \\
&=& \widetilde{M}^{s,t}_p(\mathcal{F}) \setminus \widetilde{M}^{s,t-1}_p(\mathcal{F}) \mbox{ else}.
\end{eqnarray*}
We call $\Rep(b)$ \emph{the set of cycles representing the bar $b$}.
More precisely, 
for $z \in Z_p(K_s)$, $z \in \Rep(b)$, if and only if $z$ represents a non-zero element
in $P^{s,t}_p(\mathcal{F})$.
\end{definition}

We
now arrive at a key definition from \cite{Basu-Cox}.
\begin{definition}[The set of essential simplices associated to a simple bar]
\label{def:essential}
Let $b = (s,t;1) \in \mathbf{B}_p(\mathcal{F})$ be a simple bar
of $\mathcal{F}$.
We define
\begin{equation}
\label{eqn:def:essential}
\Sigma(b) = \bigcap_{z \in \Rep(b)} \supp(z).
\end{equation}
We will call $\Sigma(b)$ \emph{the set of essential simplices of $b$}.
\end{definition}

\begin{remark}
\label{rem:essential-empty}
\begin{figure}
    \centering
    \includegraphics[scale=0.50]{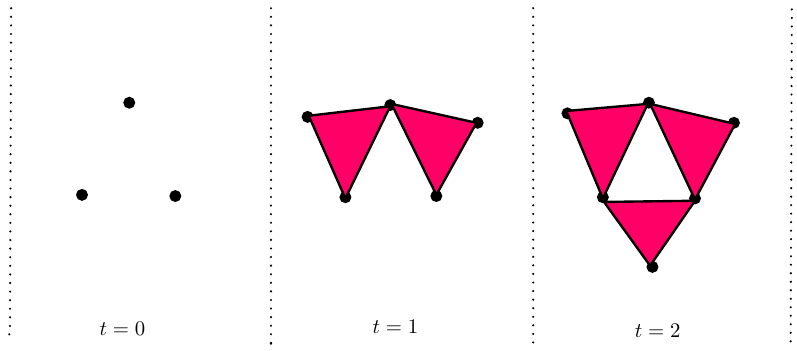}
    \caption{Empty set of essential edges}
    \label{fig:essential-empty}
\end{figure}
Note that the set of essential simplices of a bar can be empty. For example, the unique bar,
$(2,\infty;1) \in \mathbf{B}_1(\mathcal{F})$ of the filtration $\mathcal{F}$ shown in 
Figure~\ref{fig:essential-empty} has no essential simplices (edges). None of the edges 
are indispensable for obtaining a representative of the unique non-zero homology class in dimension one
that is born at $t=2$.
However, this situation cannot occur if the filtration is simplex-wise (see \cite{essential}). 
In that case the last simplex added
at the time a bar is created is always in the set of essential simplices of the bar. The filtration in Figure~\ref{fig:essential-empty} is not a simplex-wise filtration.
\end{remark}

\begin{remark}
   \label{rem:def:essential:efficiency} 
Eqn.~\eqref{eqn:def:essential} does not immediately give an effective way to test membership in $\Sigma(b)$, we will address the question of efficiently testing membership in $\Sigma(b)$ for any generic bar $b$ (see Remark~\ref{rem:efficiency}). 
\end{remark}
\subsection{Example}
\label{subsec:Example}

Before proceeding further we discuss an example (taken from \cite{Basu-Cox}) which illustrates the notion
of harmonic representatives and essential simplices. 
A bar $b$ in the barcode of a filtration is described by a triple $(s,t;\mu)$,
where $s$ denotes the birth time, $t$ the death time and $\mu$ the multiplicity 
(see Definition~\ref{def:barcode2}).

\begin{example}
\label{eg:essential}
\begin{figure}
    \centering
    \includegraphics[scale=0.65]{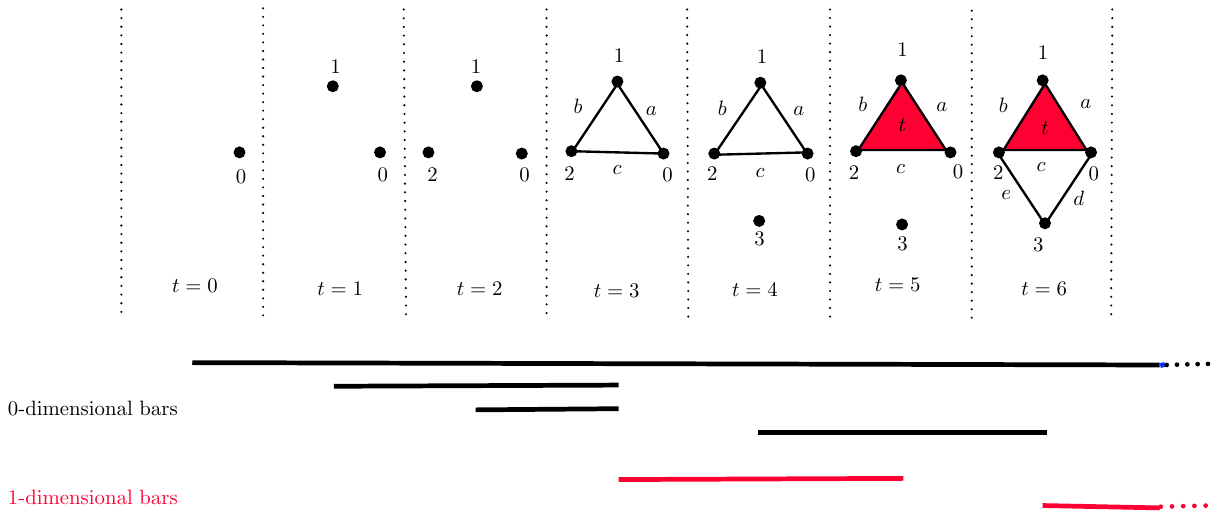}
    \caption{Barcode of a filtration}
    \label{fig:essential}
\end{figure}
Let $K$ be the simplicial complex defined by:
\begin{eqnarray*}
K^{\sqbracket{0}} &=& \{[0],[1],[2],[3] \}, \\
K^{\sqbracket{1}} &=& \{a =[0,1], b= [1,2], c= [0,2], d = [0,3], e= [2,3] \}, \\
K^{\sqbracket{2}} &=& \{ t = [0,1,2] \}.
\end{eqnarray*}
For $p=0,1,2$, we choose the standard inner product on $C_p(K)$ (see \eqref{eqn:standard}).

Let $\FF$ be the following filtration on the $K$:
\[
\emptyset\subset \{0\}\subset\{0,1\}\subset\{0,1,2\}\subset \{0,1,2,a,b,c\}\subset \{0,1,2,3,a,b,c\}
\]
\[
\subset \{0,1,2,3,a,b,c,t\}\subset \{0,1,2,3,a,b,c,d,e,t\}.
\]

For simplicity, we assume that vertex $\{0\}$ is added at time 0, and each complex in the filtration occurs at time 1 greater than the complex preceding it (see Figure~\ref{fig:essential}).
It is clear that all of the bars of the barcode of this filtration are simple.
The corresponding harmonic persistent homology 
subspaces  are listed in the following table. Note that since all the bars are simple, all these
subspaces have dimension $1$.
\begin{center}
\begin{tabular}{|c|c|}
\hline
    \multicolumn{2}{|c|}{$p=0$} \\
    \hline
     $\mathcal{P}_0^{0,\infty}(\mathcal{F})$& $\spn\{0\}$ \\
     \hline
     $\mathcal{P}_0^{1,3}(\FF)$ & $\spn\{1\}$ \\
     \hline
     $\mathcal{P}_0^{2,3}(\FF)$ & $\spn\{2\}$ \\
     \hline
     $\mathcal{P}_0^{4,6}(\FF)$ & $\spn\{3\}$ \\
     \hline
\end{tabular}
\end{center}
\begin{center}
\begin{tabular}{|c|c|}
\hline
    \multicolumn{2}{|c|}{$p=1$} \\
    \hline
     $\mathcal{P}_1^{3,5}(\mathcal{F})$& $\spn\{a+b-c\}$ \\
     \hline
     $\mathcal{P}_1^{6,\infty}(\FF)$ & $\spn\left\{a+b+2c-3d+3e\right\}$ \\
     \hline
\end{tabular}
\end{center}

For $p = 1$, the set of essential  simplices for each bar is listed below.
\begin{eqnarray}
\label{eqn:eg:essential:1}
\Sigma((3,5;1)) &=& \{a,b,c\}, \\
\label{eqn:eg:essential:2}
\Sigma((6,\infty;1)) &=& \{d,e\}.
\end{eqnarray}

\end{example}

We observe that the harmonic representative 
$\mathcal{P}_1^{6,\infty}(\FF)$
of the (generic) bar $(6,\infty;1)$ in Example~\ref{eg:essential} has the property that the absolute value of the coefficients of the essential simplices
$d,e$ of this bar (see \eqref{eqn:eg:essential:2})
are strictly greater than the absolute values of the non-essential ones 
$a,b,c$.
Also the coefficients of the essential simplices $\{a,b,c\}$ in each harmonic representative of the harmonic bar $\mathcal{P}_1^{3,5}(\mathcal{F})$, as well as
those of the essential simplices $\{d,e\}$ 
in each harmonic representative of the harmonic bar $\mathcal{P}_1^{6,\infty}(\FF)$
are equal to each other in absolute value.
 
The importance of the coefficients of various simplices in harmonic representatives (called harmonic weights in \cite{Gurnari-et-al}) in  genomic applications was observed in \emph{loc. cit.}.
We also know from \cite{Basu-Cox} that harmonic representatives maximizes the essential content amongst all  representatives. Taking one step further, 
and in view of the observations made above with respect to the harmonic representatives in Example~\ref{eg:essential},
it is reasonable to make the following (somewhat optimistic) conjecture.

\begin{conjecture}
\label{conj:essential}
Let $K$ be a finite simplicial complex and $\mathcal{F} = (K_t)_{t \in T}$ denote a filtration
of $K$. Suppose $p \geq 0$, and  
let $b = (s,t;1) \in \mathbf{B}_p(\mathcal{F})$ be a simple bar.
Let $z_0 = \sum_{\sigma} c_\sigma\cdot \sigma$
be a harmonic representative of $b$. 
\hide{
Then for $\sigma \in \Sigma(b), \tau \in K^{[p]}\setminus\Sigma(b)$,
\[
|c_\tau| < |c_\sigma|.
\]
}
Then for $\sigma \in \Sigma(b), \tau \in K^{[p]}, \tau \neq \sigma$,
\[
|c_\tau| \leq  |c_\sigma|,
\]
with equality if and only if $\tau \in \Sigma(b)$.
\end{conjecture}

Surprisingly, it turns out that under very mild conditions on the bar $b$, Conjecture~\ref{conj:essential} is true in dimension one.
We prove the following theorem which is the main result of the paper.

\begin{theorem}
\label{thm:main}
Let $\mathcal{F}$ denote a filtration $K_0 \subset K_1 \subset \cdots \subset K_N$ of finite simplicial complexes.  Suppose $p \in \{0,1\}$, and  
let $b = (s,t;1) \in \mathbf{B}_p(\mathcal{F})$ be a generic bar.
Let $z_0 = \sum_{\sigma} c_\sigma\cdot \sigma$
be a harmonic representative of $b$. 
\hide{
Then for $\sigma \in \Sigma(b), \tau \in K^{[p]}\setminus\Sigma(b)$,
\[
|c_\tau| < |c_\sigma|.
\]
}
Then for $\sigma \in \Sigma(b), \tau \in K^{[p]}, \tau \neq \sigma$,
\[
|c_\tau| \leq |c_\sigma|,
\]
with equality if and only if $\tau \in \Sigma(b)$.
\end{theorem}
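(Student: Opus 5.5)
The plan is to reduce the statement to linear algebra on $Z_p(K_s)$, and then, for $p=1$, to a maximum principle for electrical flows on the graph $K_s^{(1)}$.

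\textbf{Step 1: $Z_p(K_s)$ and the hyperplane $W$.} Write $W=\widetilde{M}^{s,t-1}_p(\mathcal{F})$, reading $W=\widetilde{M}^{s,N}_p(\mathcal{F})$ when $t=\infty$. I first claim $Z_p(K_s)=\widetilde{M}^{s,t}_p(\mathcal{F})$ for $t$ finite. Genericity (Definition~\ref{def:generic}) says every class of $\HH_p(K_s)$ is a combination of the class of $b$ and classes in the image of $\HH_p(K_{s-1})$. The latter lie in $M^{s,t'}_p$ for every $t'$, and the class of $b$ lies in $M^{s,t}_p$. Since $b$ is simple, $W$ is then a hyperplane in $Z_p(K_s)$, and $\Rep(b)=Z_p(K_s)\setminus W$.

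Next, by Propositions~\ref{prop:f}, \ref{prop:proj} and \ref{prop:harmonic-barcode}, the line $\mathcal{P}^{s,t}_p(\mathcal{F})$ is the orthogonal complement of $W$ inside $Z_p(K_s)$. Here I use that $B_p(K_s)\subseteq W$ and that $\mathcal{M}^{s,t-1}_p$ is the image of $W$ under projection. Hence $z_0$ spans $Z_p(K_s)\cap W^\perp$. Moreover $c_\sigma=\la z_0,\sigma\ra$, and $c_\sigma$ vanishes for simplices outside $K_s$.

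\textbf{Step 2: essential simplices as a hyperplane condition.} For a $p$-simplex $\sigma$, consider the coordinate functional $z\mapsto c_\sigma(z)$ on $Z_p(K_s)$. The simplex $\sigma$ is essential iff every $z\notin W$ has $c_\sigma(z)\neq 0$, i.e. iff $Z_p(K_s)\cap\{c_\sigma=0\}\subseteq W$. Since $W$ is a hyperplane, this holds iff the functional is nonzero and $Z_p(K_s)\cap\{c_\sigma=0\}=W$.

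Consequently, for essential $\sigma$, $z_0/c_\sigma$ is the unique minimum-norm element of the affine space $\{z\in Z_p(K_s): c_\sigma(z)=1\}$. This is the variational characterization: $z_0/c_\sigma$ lies in that space and is orthogonal to its direction space $W$. Also, two simplices $\sigma,\tau$ with nonzero functionals have the same kernel iff their functionals are proportional on $Z_p(K_s)$.

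\textbf{Step 3: the case $p=1$ via electrical flows.} Fix an essential edge $\sigma=[a,b]$ and normalize $c_\sigma=1$. A $1$-cycle with coefficient $1$ on $\sigma$ is the edge $\sigma$ together with a unit flow from $b$ to $a$ in the graph $G=K_s^{(1)}\setminus\{\sigma\}$. By Step 2, minimizing the norm means $z_0$ restricted to $G$ is the unit electrical flow (with unit resistances), given by potential differences of a harmonic potential $\phi$ with boundary values at $a,b$. The maximum principle puts all potentials between $\phi(a)$ and $\phi(b)$.

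For an edge $\tau=[u,v]$, consider the set $S$ of vertices with potential at least $\phi(u)$ when $\phi(u)<\phi(v)$. Every edge of $G$ crossing the cut $S$ carries current in the same direction, and the net flow across the cut is $1$. Hence $|c_\tau|\le 1$, with equality iff $\tau$ is the only edge crossing this cut with nonzero current.

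\textbf{Step 4: the equality case (main obstacle).} I must show that $|c_\tau|=1$ holds iff $\tau$ lies on every $a$--$b$ path in $G$. By Step 2, that path condition is exactly $\{c_\tau=0\}\cap Z_1(K_s)=W$, i.e. essentiality of $\tau$. The first thing I would try is the following.

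If some $a$--$b$ path in $G$ avoids $\tau$, it crosses the level cut at some other edge $e$. I need to exclude the degenerate possibility that all such crossing edges carry zero current. Such an edge would have equal potentials at its endpoints, so I would replace $S$ by the cut at a slightly perturbed level. A level-set or connectivity argument on the harmonic potential should then produce a strictly current-carrying crossing edge different from $\tau$, giving $|c_\tau|<1$.

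The converse is easy. If $\tau$ is on every path, it is a bridge of $G$ separating $a$ from $b$. All the unit current must then pass through it, and its functional is proportional to $c_\sigma$. Equality among essential edges follows the same way.

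\textbf{Step 5: the case $p=0$.} Here $Z_0(K_s)=C_0(K_s)$, and the same hyperplane argument identifies $z_0$, via Steps 1--2, with a minimum-norm $0$-chain with prescribed value on an essential vertex and orthogonal to $W$. The vertex coefficients are then constant on the component created at time $s$ and zero or smaller elsewhere. I would check this by direct computation.
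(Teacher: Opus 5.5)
Your overall route is the paper's. Your Step~2 is a compact version of Lemmas~\ref{lem:linear}--\ref{lem:simplified}; in effect it shows $W=\ker(c_\sigma|_{Z_1(K_s)})=Z_1(G)$. The minimum-norm characterization is the one in the proof of Theorem~\ref{thm:simplified}, and the potential/cut argument is Lemma~\ref{lem:graph}. Your ``all current passes through an $a$--$b$ bridge'' argument for essential $\tau$ is a clean substitute for the paper's proportional-functionals-on-a-simple-cycle argument.

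However, the step that carries the content is not actually done. The cut in Step~3, $S=\{\phi\ge\phi(u)\}$, contains both $u$ and $v$, so $\tau$ does not cross it and you get no bound on $c_\tau$. Step~4 then ends with ``should then produce''. The missing device, used in Lemma~\ref{lem:graph}, is a strict level: take $A=\{\phi>c\}$ with $\phi(u)<c<\phi(v)$. Every edge crossing $A\mid A^c$ has endpoints on opposite sides of $c$, hence strictly different potentials and nonzero current in the same direction. So your degenerate case cannot occur, and no perturbation is needed. Since $\phi$ is constant on components of $G$ not containing $a,b$, an edge with $c_\tau\neq 0$ lies in the $a$--$b$ component. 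There the maximum principle gives $\phi(a)\ge\phi(v)>c>\phi(u)\ge\phi(b)$. Thus $\sum_{e\text{ crossing}}|c_e|=1$, and any $a$--$b$ path in $G$ avoiding $\tau$ crosses at some $e\neq\tau$ with $|c_e|>0$, forcing $|c_\tau|<1$.

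Step~5 also does not prove the $p=0$ case. Knowing that $z_0$ is constant on the new component does not settle the equality clause, which would then require every vertex of that component to be essential. The missing observation follows in one line from your own Step~2. Since $Z_0(K_s)=C_0(K_s)$, the minimum-norm element of $\{z\in C_0(K_s): z(\sigma)=1\}$ is $\sigma$ itself, so $z_0=c_\sigma\,\sigma$. Hence $c_\tau=0$ for all $\tau\neq\sigma$. No $\tau\neq\sigma$ can be essential either, since that would force $z_0\parallel\tau$ as well. Equivalently, an essential vertex is orthogonal to $B_0(K_s)$, hence isolated, so the new component is $\{\sigma\}$. The paper reaches the same conclusion differently: each vertex in $\supp(z_0)$ is itself a representative of $b$, so $\Sigma(b)=\emptyset$ unless $|\supp(z_0)|=1$.
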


\begin{remark}
\label{rem:restrictions}
    The restriction on $p$ as well as
    the genericity assumption are both required.
    We construct a sequence (one for each $p$) of counter-examples to Conjecture~\ref{conj:essential} for $p>1$ (see Section~\ref{sec:counterexample}). For $p=1$, we
    show that Property~\eqref{itemlabel:generic:1} in the definition of genericity is needed for the inequality
    in Theorem~\ref{thm:main} to be non-vacuous,
     and also provide counter-examples if Property~\eqref{itemlabel:generic:2} in the definition of genericity is violated (see Section~\ref{sec:appendix}).
     Also note that the case $p=0$ is almost vacuous. So the main strength of Theorem~\ref{thm:main} lies in the case $p=1$.
\end{remark}

\subsection{Prior and related work}
We briefly review in this section prior and related work on persistent harmonic homology.
Persistent harmonic cohomology has being mentioned before \cite{talk:Lieutier} (see also \cite{Chen-et-al-2021}),
but the 
emphasis is more on manifold-learning rather than on simplicial filtrations. Memoli et al.\  \cite{Memoli-et-al} also studies persistent homology groups,
defining them in terms of the Laplace operator (see Remark~\ref{rem:harmonic}) 
and gives efficient
algorithms for computing them. They also establish interesting connections with spectral graph theory and
prove certain stability results on the eigenvalues of the Laplace operator when applied to a simplicial
filtration.
More recently, the paper \cite{gulen2025grassmannianpersistencediagramsspecial} defines  Grassmannian persistence diagrams to persistent Laplacians and establishes an isomorphism with harmonic barcodes. It would interesting to interpret the results of the current paper in their language. 
Harmonic homology as attracted the attention of the quantum information theorists. 
Super-polynomial speed ups for quantum algorithms for computing harmonic homology subspaces appear in \cite{Hayakawa-et-al-2026} (see also \cite{gyurik-et-al}).

\medskip
The rest of the paper is organized as follows. In Section~\ref{sec:main}, we prove Theorem~\ref{thm:main}. In Section~\ref{sec:counterexample} we construct counterexamples  to Conjecture~\ref{conj:essential} in two steps.
In Subsection~\ref{subsec:counter:2}, we first construct a counter-example with $p=2$ and in Subsection~\ref{subsec:counter:p}, we extend this counterexample to all $p \geq 2$. In Section~\ref{sec:appendix} we 
provide counter-examples if the genericity hypothesis is violated. Finally, in Section~\ref{sec:conclusion} we discuss some future work.

\section{Proof of Theorem~\ref{thm:main}}
\label{sec:main}
In this section we prove Theorem~\ref{thm:main}. The proof involves several steps. In Subsection~\ref{subsec:relative} we reduce the 
proof of Theorem~\ref{thm:main}
to the special case of filtrations of length two satisfying an extra condition (Theorem~\ref{thm:simplified}). 
For filtrations of length two and for generic bars, we give a simpler characterization of essential simplices (see Definition~\ref{def:relative-essential}) and prove that it agrees
with the original definition (Lemma~\ref{lem:simplified}). This reduces the proof of Theorem~\ref{thm:main} to proving dominance of the coefficients of
the relatively essential simplices in harmonic representatives in dimension one, which we formulate as Theorem~\ref{thm:simplified}.
In Subsection~\ref{subsec:simplified} we prove Theorem~\ref{thm:simplified}. 
(Note that Theorem~\ref{thm:simplified} is the core mathematical result of the paper; Theorem~\ref{thm:main} follows from the reduction established in Subsection~\ref{subsec:relative}.)

\subsection{Reduction to length two filtrations}
\label{subsec:relative}
In this section, we reduce the general case of Theorem~\ref{thm:main}
to studying certain filtrations $K' \subset K$ of length two satisfying 
certain condition (see Eqn. \eqref{eqn:two-step}). 
We define a notion of \emph{relative essential simplices} (Definition~\ref{def:relative-essential}) and prove (Lemma~\ref{lem:simplified}) that under assumption of genericity of bars (Definition~\ref{def:generic}), 
this notion agrees with the previous definition of essential simplices (Definition~\ref{def:essential}) (with $K=K_s$ and $K' = K_{s-1}$ for a generic bar $b = (s,t;1)$). 
The alternative definition happens to be a much simpler characterization of being essential than the original definition, and is easier to verify and use (see Remark~\ref{rem:efficiency}). 
As a result it makes possible the use of certain graph Laplacian arguments that play a key role in the proof of Theorem~\ref{thm:simplified}. 

The following proposition is the starting point of the reduction mentioned above.
\begin{proposition}
    \label{prop:simplified}
    Let $K$ be a finite simplicial complex and  
let $\mathcal{F}$ denote a finite filtration
$K_0 \subset \cdots \subset K_N= K$.
By convention we will assume that $K_s = \emptyset$ for $s <0$ and 
$K_t = K$ for $t \geq N$.
    Let $s \in T$, $t \in T \cup \{\infty\}$ and $s < t$.
Suppose that the bar $b = (s,t;1) \in \mathbf{B}_p(\mathcal{F})$ is generic.
Then, 
    \[
    \mathcal{P}_p^{s,t}(\mathcal{F}) = \mathcal{H}_p(K_s) \cap (\mathrm{Im} \; \mathfrak{i}_p^{s-1,s})^\perp,
    \]
and 
\[
\dim \mathcal{P}_p^{s,t}(\mathcal{F}) = 1.
\]
\end{proposition}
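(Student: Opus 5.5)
The plan is to show that $\mathcal{P}_p^{s,t}(\mathcal{F})$ is contained in the space $V := \mathcal{H}_p(K_s) \cap (\mathrm{Im}\; \mathfrak{i}_p^{s-1,s})^\perp$, and that both spaces are one-dimensional; equality then follows.

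\emph{Dimension of $\mathcal{P}_p^{s,t}(\mathcal{F})$.} By Proposition~\ref{prop:harmonic-barcode},
\[
\dim \mathcal{P}_p^{s,t}(\mathcal{F}) = \dim P_p^{s,t}(\mathcal{F}) = \mu_p^{s,t}(\mathcal{F}) = 1,
\]
since $b$ is simple.

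\emph{Dimension of $V$.} Here $\mathrm{Im}\;\mathfrak{i}_p^{s-1,s}$ is a subspace of $\mathcal{H}_p(K_s)$, so
\[
\dim V = \dim \mathcal{H}_p(K_s) - \dim \mathrm{Im}\;\mathfrak{i}_p^{s-1,s}.
\]
The diagram \eqref{eqn:commutative} commutes and its vertical maps are isomorphisms (Proposition~\ref{prop:f}). Hence this equals $\dim \HH_p(K_s) - \dim \HH_p^{s-1,s}(\mathcal{F})$. Write $M_p^{s,s}(\mathcal{F}) := (i_p^{s,s})^{-1}(\HH_p^{s-1,s}(\mathcal{F})) = \HH_p^{s-1,s}(\mathcal{F})$, extending Definition~\ref{def:barcode} to $t=s$. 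Then there is a chain of subspaces
\[
\HH_p^{s-1,s}(\mathcal{F}) = M_p^{s,s} \subset M_p^{s,s+1} \subset \cdots \subset M_p^{s,N} \subset \HH_p(K_s).
\]
These inclusions hold because $i_p^{s,t'+1} = i_p^{t',t'+1}\circ i_p^{s,t'}$ and $i_p^{t',t'+1}$ maps $\HH_p^{s-1,t'}$ into $\HH_p^{s-1,t'+1}$. The successive quotients are exactly $P_p^{s,t'}(\mathcal{F})$ for $t' = s+1,\dots,N,\infty$. Therefore
\[
\dim \HH_p(K_s) - \dim \HH_p^{s-1,s}(\mathcal{F}) = \sum_{t'>s} \mu_p^{s,t'}(\mathcal{F}).
\]
By genericity (both conditions of Definition~\ref{def:generic}) this sum is $1$, so $\dim V = 1$.

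\emph{Inclusion $\mathcal{P}_p^{s,t}(\mathcal{F}) \subset V$.} By definition $\mathcal{P}_p^{s,t}(\mathcal{F})$ lies in $\mathcal{H}_p(K_s)$. It is orthogonal to $\mathcal{M}_p^{s,t-1}(\mathcal{F})$, or to $\mathcal{M}_p^{s,N}(\mathcal{F})$ when $t=\infty$. So it suffices to show that $\mathrm{Im}\;\mathfrak{i}_p^{s-1,s} \subset \mathcal{M}_p^{s,t'}(\mathcal{F})$ for every $t' \ge s$. Take $h = \mathfrak{i}_p^{s-1,s}(h')$. Functoriality gives $\mathfrak{i}_p^{s,t'}\circ \mathfrak{i}_p^{s-1,s} = \mathfrak{i}_p^{s-1,t'}$; this follows from \eqref{eqn:commutative} and $i^{s,t'}\circ i^{s-1,s} = i^{s-1,t'}$, transported by the isomorphisms $\mathfrak{f}_p$. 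Hence $\mathfrak{i}_p^{s,t'}(h) \in \mathcal{H}_p^{s-1,t'}(\mathcal{F})$, that is, $h \in \mathcal{M}_p^{s,t'}(\mathcal{F})$. For $t = s+1$ we use the analogous convention $\mathcal{M}_p^{s,s} = \mathrm{Im}\;\mathfrak{i}_p^{s-1,s}$, where the claim is immediate. The case $s=0$ is trivial, since $K_{-1}=\emptyset$.

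Combining the inclusion with the two dimension counts gives the equality. The only delicate point is bookkeeping: matching the quotients in the chain of $M_p^{s,t'}$ with the multiplicities, including the boundary conventions $t'=s$ and $t'=\infty$. Once that is done, the genericity hypothesis converts the count directly into $\dim V = 1$.
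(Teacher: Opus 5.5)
Your proof is correct. It rests on the same fact as the paper's proof: genericity kills every successive quotient of the chain $\HH_p^{s-1,s}(\mathcal{F}) = M_p^{s,s}(\mathcal{F}) \subset \cdots \subset M_p^{s,N}(\mathcal{F}) \subset \HH_p(K_s)$ except $P_p^{s,t}(\mathcal{F})$. Where you differ is in how you draw the conclusion from it.

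The paper converts the vanishing quotients into subspace equalities. It shows that $M_p^{s,t-1}(\mathcal{F})$ (resp.\ $M_p^{s,N}(\mathcal{F})$) equals $\HH_p^{s-1,s}(\mathcal{F})$ and that $M_p^{s,t}(\mathcal{F}) = \HH_p(K_s)$. It then reads off $\mathcal{P}_p^{s,t}(\mathcal{F})$ from Definition~\ref{def:harmonic-barcode}, leaving the transfer to the harmonic side implicit.

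You instead argue in three steps. First, you telescope dimensions to get $\dim V = 1$. Second, you prove the containment $\mathcal{P}_p^{s,t}(\mathcal{F}) \subset V$ using $\mathfrak{i}_p^{s,t'} \circ \mathfrak{i}_p^{s-1,s} = \mathfrak{i}_p^{s-1,t'}$. Third, you close by comparing dimensions via Proposition~\ref{prop:harmonic-barcode}.

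Your route has two advantages. It makes the passage from the $M$'s to the harmonic $\mathcal{M}$'s explicit. It also handles the top step $\HH_p(K_s) \supset M_p^{s,N}(\mathcal{F})$, i.e.\ $\mu_p^{s,\infty}(\mathcal{F}) = 0$ when $t \neq \infty$, the same way as the other steps; the paper's case analysis over $s < t' \le N$ leaves that step implicit.

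The paper's route has its own advantage: it produces the subspace equalities themselves, which Lemma~\ref{lem:W} later quotes from this proof. Your argument yields them just as easily, since each vanishing $\mu_p^{s,t'}(\mathcal{F})$ in your sum means $M_p^{s,t'}(\mathcal{F}) = M_p^{s,t'-1}(\mathcal{F})$.
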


\begin{proof}
We have a filtration of subspaces, 
\[
\HH_p(K_s) \supset M^{s,N}_p(\mathcal{F}) \supset \cdots  \supset M^{s,t}_p(\mathcal{F})  \supset M^{s,t-1}_p(\mathcal{F}) \supset \cdots \supset 
M^{s,s+1}_p(\mathcal{F}) \supset M^{s,s}_p(\mathcal{F}) = \HH_p^{s-1,s}(\mathcal{F}).
\]
We separate the two cases, $t = \infty$ and $t \neq \infty$.
\begin{enumerate}
    \item 
    \label{itemlabel:proof:prop:simplified:1}
    Case $t = \infty$. In this case all the inclusions 
    \[
    M^{s,N}_p(\mathcal{F}) \supset \cdots  
    \cdots \supset 
M^{s,s+1}_p(\mathcal{F}) \supset M^{s,s}_p(\mathcal{F}) = \HH_p^{s-1,s}
    \]
    are equalities. Otherwise let $t', N \geq t' > s$ be the largest index such that 
    \[
    M^{s,t'}_p(\mathcal{F})  \supsetneq  M^{s,t'-1}_p(\mathcal{F}).
    \]
    But this implies that $\mu_p^{s,t'} > 0$, and since $t' \neq \infty$,
    this will contradict 
    Property~\eqref{itemlabel:generic:2} in Definition~\ref{def:generic} 
    and the fact that $b$ is assumed to be generic.
    It now follows from Property~\eqref{itemlabel:generic:1} in Definition~\ref{def:generic} and the definition of
    $\mathcal{P}_p^{s,\infty}(\mathcal{F})$ (Definition~\ref{def:harmonic-barcode})  that
    \[
    \mathcal{P}_p^{s,\infty}(\mathcal{F}) = \mathcal{H}_p(K_s) \cap (\mathrm{Im} \; \mathfrak{i}_p^{s-1,s})^\perp,
    \]
and 
\[
\dim \mathcal{P}_p^{s,\infty}(\mathcal{F}) = 1.
\]
    in this case.
    \item 
     \label{itemlabel:proof:prop:simplified:2}
    Case $t \neq \infty$. In this case we claim that 
    all the containments other than $M^{s,t}_p(\mathcal{F})  \supset M^{s,t-1}_p(\mathcal{F})$, in the sequence
    \[
\HH_p(K_s) \supset M^{s,N}_p(\mathcal{F}) \supset \cdots  \supset M^{s,t}_p(\mathcal{F})  \supset M^{s,t-1}_p(\mathcal{F}) \supset \cdots \supset 
M^{s,s+1}_p(\mathcal{F}) \supset M^{s,s}_p(\mathcal{F}) = \HH_p^{s-1,s}(\mathcal{F}).
\]
are equalities.
Otherwise let $t', s < t' \leq N, t' \neq t $ be an index such that 
    \[
    M^{s,t'}_p(\mathcal{F})  \supsetneq  M^{s,t'-1}_p(\mathcal{F}).
    \]
    But this implies that $\mu_p^{s,t'} > 0$, and since $t' \neq t$,
    this will contradict 
    Property~\ref{itemlabel:generic:2} in Definition~\ref{def:generic} 
    and the fact that $b$ is assumed to be generic.

    It now follows from Property~\eqref{itemlabel:generic:1} in Definition~\ref{def:generic} and the definition of
    $\mathcal{P}_p^{s,t}(\mathcal{F})$ (Definition~\ref{def:harmonic-barcode})  that
    \[
    \mathcal{P}_p^{s,t}(\mathcal{F}) = \mathcal{H}_p(K_s) \cap (\mathrm{Im} \; \mathfrak{i}_p^{s-1,s}))^\perp,
    \]
and 
\[
\dim \mathcal{P}_p^{s,t}(\mathcal{F}) = 1.
\]
in this case.
\end{enumerate}

\end{proof}


Let $K$ be a finite simplicial complex and $K' \subset K$ a sub-complex,
and $i:K' \subset K$ the inclusion. Suppose that
\begin{equation}
\label{eqn:two-step}
\dim \HH_p(K)/ i_p(\HH_p(K')) = 1.
\end{equation}

Now let \(h\in C_p(K), h \neq 0\) be a nonzero chain spanning the one-dimensional space
\[
\Harm_p(K)\cap \Harm_p(K')^\perp.
\]

\begin{remark}
\label{rem:K'}
We remark that under the assumption $\dim \HH_p(K)/ i_p(\HH_p(K')) = 1$,
\begin{equation}
\label{eqn:rem:K'}
\Harm_p(K)\cap \Harm_p(K')^\perp = \Harm_p(K)\cap \mathfrak{i}_p(\Harm_p(K'))^\perp.
\end{equation}

To see this observe that for $k' \in \Harm_p(K')$, $\mathfrak{i}_p(k') = \mathrm{proj}_{B_p(K)^\perp}(k')$. Thus, $k' - \mathfrak{i}_p(k') \in B_p(K)$.

Let $h_1 \in \Harm_p(K)\cap \Harm_p(K')^\perp$.
Since $h_1 \in \Harm_p(K)$, $h_1 \in B_p(K)^\perp$.
Therefore, $\la h_1, k'\ra = \la h_1, \mathfrak{i}_p(k') \ra$.

Hence,
\[
h_1 \perp \mathfrak{i}_p(\Harm_p(K')) \Leftrightarrow h_1 \perp \Harm_p(K').
\]
\end{remark}

Let
\begin{equation}
\label{eqn:W}
W_p:=B_p(K)+Z_p(K').
\end{equation}
Clearly,
\[
W_p \subseteq Z_p(K).
\]

Thus
$
h\in\Harm_p(K),
h\perp \Harm_p(K')
$.

\begin{definition}[Relative essential simplex]
\label{def:relative-essential}
We call a simplex \(\sigma\) of \(K^{[p]}\) \emph{relatively essential}
with respect to \(h\) if
\[
\langle h+w,\sigma\rangle\neq 0
\qquad
\text{for every }w\in W_p.
\]
Equivalently,
\[
\langle h+b+z',\sigma\rangle\neq 0
\qquad
\text{for every }b\in B_p(K),\ z'\in Z_p(K').
\]

We denote by  \(S_E(h)\)  the set of relatively essential simplices with respect to $h$, and by
\(S_N(h) = K^{[p]} \setminus S_E(h)\). 
\end{definition}

Here is an alternative characterization of being relatively essential. 
\begin{lemma}[Alternative characterization of relative essentiality]
\label{lem:linear}
For \(\sigma \in K^{[p]}\), the following are equivalent (see Definition~\ref{def:relative-essential}):
\[
\sigma\in S_E(h)
\]
and
\[
\langle h,\sigma\rangle\neq 0
\quad\text{and}\quad
\sigma\perp W_p.
\]
\end{lemma}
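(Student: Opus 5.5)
The plan is to prove the two implications separately, treating \(W_p\) as a linear subspace of \(C_p(K)\). The key fact is that the linear functional \(w \mapsto \langle w,\sigma\rangle\) on \(W_p\) is either identically zero or surjective onto \(\R\).

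\emph{Reverse direction.} Suppose \(\langle h,\sigma\rangle\neq 0\) and \(\sigma\perp W_p\). Then for every \(w\in W_p\),
\[
\langle h+w,\sigma\rangle=\langle h,\sigma\rangle+\langle w,\sigma\rangle=\langle h,\sigma\rangle\neq 0 .
\]
Hence \(\sigma\in S_E(h)\) by Definition~\ref{def:relative-essential}.

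\emph{Forward direction.} Suppose \(\sigma\in S_E(h)\).

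First, take \(w=0\in W_p\). Definition~\ref{def:relative-essential} then gives \(\langle h,\sigma\rangle\neq 0\) directly.

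Second, suppose for contradiction that \(\sigma\) is not orthogonal to \(W_p\). Then there is \(w_0\in W_p\) with \(\langle w_0,\sigma\rangle\neq 0\). Put
\[
w=-\frac{\langle h,\sigma\rangle}{\langle w_0,\sigma\rangle}\, w_0 .
\]
Since \(W_p=B_p(K)+Z_p(K')\) is a sum of subspaces, it is closed under scalar multiplication, so \(w\in W_p\). By construction \(\langle h+w,\sigma\rangle=0\), which contradicts \(\sigma\in S_E(h)\). Therefore \(\sigma\perp W_p\).

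There is no real obstacle here. The only point to state explicitly is that \(W_p\) is a linear subspace, so scalar multiples of its elements stay in it. The equivalent formulation in Definition~\ref{def:relative-essential}, quantifying over pairs \(b\in B_p(K)\) and \(z'\in Z_p(K')\), is the same statement, because every element of \(W_p\) has the form \(b+z'\).
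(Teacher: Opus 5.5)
Your proposal is correct and follows the paper's own proof. Both arguments take \(w=0\) to get \(\langle h,\sigma\rangle\neq 0\), rescale a witness \(w_0\in W_p\) with \(\langle w_0,\sigma\rangle\neq 0\) so that \(\langle h+w,\sigma\rangle=0\), and prove the converse by the one-line computation; your remark that \(W_p\) is a linear subspace, so the rescaled \(w_0\) stays in it, states explicitly a point the paper uses without comment.
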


\begin{proof}
Suppose first that \(\sigma\in S_E(h)\). Taking \(w=0\) gives
$
\langle h,\sigma\rangle\neq 0
$.
If there existed \(w_0\in W_p\) such that
$
\langle w_0,\sigma\rangle\neq 0
$,
then, for
\[
t=-\frac{\langle h,\sigma\rangle}{\langle w_0,\sigma\rangle},
\]
we would have
$
\langle h+t w_0,\sigma\rangle=0
$,
contradicting relative essentiality. Therefore \(\sigma\perp W_p\).

Conversely, suppose
\[
\langle h,\sigma\rangle\neq 0
\qquad\text{and}\qquad
\sigma\perp W_p.
\]
Then for every \(w\in W_p\),
\[
\langle h+w,\sigma\rangle
=
\langle h,\sigma\rangle+\langle w,\sigma\rangle
=
\langle h,\sigma\rangle
\neq 0.
\]
Hence \(\sigma\in S_E(h)\).
\end{proof}


We also have:
\begin{lemma}
\label{lem:orthogonality}
\[
h\perp W_p.
\]
\end{lemma}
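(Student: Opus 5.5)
We need to show $h \perp W_p = B_p(K) + Z_p(K')$, and it suffices to check the two summands separately, by linearity of the inner product.

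\emph{First summand.} By assumption $h$ spans $\Harm_p(K)\cap \Harm_p(K')^\perp$, so in particular $h \in \Harm_p(K) = Z_p(K)\cap B_p(K)^\perp$. This gives $h \perp B_p(K)$ directly.

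\emph{Second summand.} Let $z' \in Z_p(K')$; we must show $\la h, z'\ra = 0$. I would decompose $z'$ orthogonally inside $C_p(K')$ using the Hodge-type splitting $Z_p(K') = \Harm_p(K') \oplus B_p(K')$. This splitting holds because $\Harm_p(K') = Z_p(K')\cap B_p(K')^\perp$ and $B_p(K')\subset Z_p(K')$, so $Z_p(K')$ is the orthogonal sum of $B_p(K')$ and its orthogonal complement within $Z_p(K')$. Proposition~\ref{prop:f} applied to $K'$ says the same thing: $z' - \mathrm{proj}_{B_p(K')^\perp}(z') \in B_p(K')$, and the projection lies in $\Harm_p(K')$.

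So write $z' = k' + b'$ with $k' \in \Harm_p(K')$ and $b' \in B_p(K')$. Then:
\begin{itemize}
\item $\la h, k'\ra = 0$, since $h \perp \Harm_p(K')$ by the choice of $h$. Equivalently, via Remark~\ref{rem:K'}, $h \perp \mathfrak{i}_p(\Harm_p(K'))$.
\item $\la h, b'\ra = 0$, since $B_p(K') \subset B_p(K)$ (because $K'\subset K$ makes $\partial_{p+1}(K')$ the restriction of $\partial_{p+1}(K)$) and $h \perp B_p(K)$ by the first step.
\end{itemize}
Hence $\la h, z'\ra = 0$, and combining both summands gives $h\perp W_p$.

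The only point needing care is the decomposition of $Z_p(K')$, together with the observation that the inner product on $C_p(K')$ is the restriction of the standard inner product on $C_p(K)$, since both come from orthonormal simplex bases. With that in place there is no real obstacle; the lemma is essentially bookkeeping.
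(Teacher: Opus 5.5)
Your proof is correct and matches the paper's argument: $h\perp B_p(K)$ because $h$ is harmonic, and for $z'\in Z_p(K')$ the Hodge splitting $z'=b'+k'$ with $b'\in B_p(K')\subseteq B_p(K)$ and $k'\in\Harm_p(K')$ gives $\la h,z'\ra=0$. The only addition is that you justify the splitting of $Z_p(K')$ directly, where the paper just cites the Hodge decomposition.
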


\begin{proof}
Since
$
h\in\Harm_p(K)
$,
we have
$
h\perp B_p(K)
$.
It remains to show that
$
h\perp Z_p(K')
$.

Let
$
z'\in Z_p(K')
$.
By the Hodge decomposition on \(K'\) (see for example \cite{Lim2020}), there exist
$
b'\in B_p(K'),
k'\in\Harm_p(K')
$,
such that
$
z'=b'+k'
$.

Since
$
B_p(K')\subseteq B_p(K)
$,
we have
$
h\perp b'
$.

Since
$
h\perp \Harm_p(K')
$,
we have
$
h\perp k'
$.

Therefore
\[
\langle h,z'\rangle
=
\langle h,b'\rangle+\langle h,k'\rangle
=
0.
\]

Hence
$
h\perp Z_p(K')
$.

Since
$
W_p=B_p(K)+Z_p(K')
$,
we conclude that
$
h\perp W_p
$.
\end{proof}

\begin{lemma}
\label{lem:W}
    Suppose that $b=(s,t;1) \in B_p(\mathcal{F})$ be a generic bar in a finite filtration $\mathcal{F}$. 
    Let $K = K_s, K' = K_{s-1}$ and $W_p$ be the subspace defined in 
    Eqn.~\eqref{eqn:W}.
    Then,
    \begin{eqnarray*}
    W_p &=& 
    \widetilde{M}^{s,N}_p(\mathcal{F}) \text{ if $t = \infty$},\\
    &=& \widetilde{M}^{s,t-1}_p(\mathcal{F}) \text{ if $t \neq \infty$}.
    \end{eqnarray*}
\end{lemma}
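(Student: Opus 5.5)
We unwind the definitions of $\widetilde{M}$ and compare them with $W_p = B_p(K_s) + Z_p(K_{s-1})$. Recall that $\widetilde{M}^{s,t'}_p(\mathcal{F}) = (\phi_p^s)^{-1}(M^{s,t'}_p(\mathcal{F}))$ and $M^{s,t'}_p(\mathcal{F}) = (i_p^{s,t'})^{-1}(\HH^{s-1,t'}_p(\mathcal{F}))$. In the proof of Proposition~\ref{prop:simplified} we saw that genericity of $b$ forces all inclusions in the chain
\[
\HH_p(K_s)\supset M^{s,N}_p(\mathcal{F})\supset\cdots\supset M^{s,s}_p(\mathcal{F}) = \HH_p^{s-1,s}(\mathcal{F})
\]
to be equalities, except the single jump at $t$ (or at the top, $\HH_p(K_s)\supset M^{s,N}_p$, when $t=\infty$). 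Hence in the case $t=\infty$ we get $M^{s,N}_p(\mathcal{F}) = \HH^{s-1,s}_p(\mathcal{F})$. In the case $t\neq\infty$ we get $M^{s,t-1}_p(\mathcal{F}) = M^{s,s}_p(\mathcal{F}) = \HH^{s-1,s}_p(\mathcal{F})$. In both cases it therefore suffices to prove
\[
(\phi_p^s)^{-1}\bigl(\HH^{s-1,s}_p(\mathcal{F})\bigr) = B_p(K_s) + Z_p(K_{s-1}).
\]

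This identity is elementary. The group $\HH^{s-1,s}_p(\mathcal{F}) = \mathrm{Im}(i_p^{s-1,s})$ consists of the classes $z' + B_p(K_s)$ with $z'\in Z_p(K_{s-1})$, because the inclusion-induced map sends $[z']_{K_{s-1}}$ to $[z']_{K_s}$.

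For the inclusion $\supseteq$: if $z = b' + z'$ with $b'\in B_p(K_s)$ and $z'\in Z_p(K_{s-1})$, then $z\in Z_p(K_s)$, and $\phi_p^s(z) = [z'] \in \mathrm{Im}(i_p^{s-1,s})$.

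For the inclusion $\subseteq$: if $z\in Z_p(K_s)$ and $\phi_p^s(z) = i_p^{s-1,s}([z'])$ for some $z'\in Z_p(K_{s-1})$, then $z - z'\in B_p(K_s)$. Thus $z\in W_p$.

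The only step needing care is the claim that the intermediate $M$'s collapse. The main obstacle is confirming that the equalities established in the proof of Proposition~\ref{prop:simplified} hold at the level of the $M$'s, before the harmonic identification. This is the case: that argument compared dimensions of consecutive $M^{s,t'}_p$ and used only the vanishing of $\mu^{s,t'}_p$ for $t'\neq t$. I would restate this briefly, including the boundary index $t'=s+1$ and the convention $M^{s,s}_p = \HH^{s-1,s}_p$, and then conclude by applying $(\phi_p^s)^{-1}$.
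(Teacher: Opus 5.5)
Your proposal is correct and follows essentially the same route as the paper. You use the collapse of the chain of $M^{s,t'}_p(\mathcal{F})$ from the proof of Proposition~\ref{prop:simplified} to identify the relevant $\widetilde{M}$ with $(\phi_p^s)^{-1}(\mathrm{Im}\, i_p^{s-1,s})$, and then prove $(\phi_p^s)^{-1}(\mathrm{Im}\, i_p^{s-1,s}) = B_p(K_s) + Z_p(K_{s-1})$ by the same two-inclusion diagram chase the paper carries out.
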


\begin{proof}
   In the case $t = \infty$. It follows from  
   Eqn. \eqref{eqn:tildeM}
   and Case~\ref{itemlabel:proof:prop:simplified:1} in the proof of 
    Proposition~\ref{prop:simplified} that in this case,
    \begin{equation}
    \label{eqn:proof:lem:W:1}
    \widetilde{M}^{s,N}_p(\mathcal{F}) 
    = (\phi_p^s)^{-1}(\HH^{s-1,s}_p(\mathcal{F})) = (\phi_p^s)^{-1}(i_p^{s-1,s}(\HH_p(K_{s-1}))).
    \end{equation}

    If $t \neq \infty$, 
    then  it follows from 
    Eqn. \eqref{eqn:tildeM}
    and Case~\ref{itemlabel:proof:prop:simplified:2} in the proof of 
    Proposition~\ref{prop:simplified} that
    \begin{equation}
    \label{eqn:proof:lem:W:2}
    \widetilde{M}^{s,t-1}_p(\mathcal{F}) = 
    (\phi_p^s)^{-1}(M^{s,t-1}_p(\mathcal{F})) = (\phi_p^s)^{-1}(i_p^{s-1,s}(\HH_p(K_{s-1}))).
    \end{equation}
    
    We also have the following commutative diagram.
    \begin{equation}
    \label{eqn:diagram}
      \xymatrix{
    B_p(K_{s-1}) \ar@{^{(}->}[r]\ar@{^{(}->}[d] & B_p(K_s) \ar@{^{(}->}[d] \\
    Z_p(K_{s-1}) \ar@{^{(}->}[r]\ar@{->>}[d]^{\phi_p^{s-1}} & Z_p(K_s) \ar@{->>}[d]^{\phi_p^s}\\
    \HH_p(K_{s-1})\ar[r]^{i_p^{s-1,s}} & \HH_p(K_s)
    }  
    \end{equation}

In view of \eqref{eqn:proof:lem:W:1} and \eqref{eqn:proof:lem:W:2}
it suffices to prove that 
\[
W_p = Z_p(K_{s-1}) + B_p(K_s) = (\phi_p^s)^{-1}(i_p^{s-1,s}(\HH_p(K_{s-1}))).
\]
We first prove that
\[
W_p = Z_p(K_{s-1}) + B_p(K_s) \subset (\phi_p^s)^{-1}(i_p^{s-1,s}(\HH_p(K_{s-1}))).
\]

Suppose that $z \in Z_p(K_{s-1})$. Then,
\[
i^{s-1,s}_p \circ \phi^{s-1}_p(z) = \phi^{s}_p(z)
\]
from the commutativity of \eqref{eqn:diagram}, which shows that 
\[
z \in (\phi_p^s)^{-1}(i_p^{s-1,s}(\HH_p(K_{s-1}))).
\]
This shows that 
\[
Z_p(K_{s-1}) \subset (\phi_p^s)^{-1}(i_p^{s-1,s}(\HH_p(K_{s-1}))).
\]

We now show that $B_p(K_{s}) \subset (\phi_p^s)^{-1}(i_p^{s-1,s}(\HH_p(K_{s-1})))$. Clearly, if $b \in B_p(K_s)$, then $\phi_p^s(b) = 0$, and 
$0 \in i_p^{s-1,s}(\HH_p(K_{s-1}))$. Therefore, 
$b \in (\phi_p^s)^{-1}(i_p^{s-1,s}(\HH_p(K_{s-1})))$.

Conversely, suppose that 
\[
z \in (\phi_p^s)^{-1}(i_p^{s-1,s}(\HH_p(K_{s-1}))).
\]

Then, 
\[
\phi_p^s(z) \in (i_p^{s-1,s}(\HH_p(K_{s-1}))).
\]
Using a one-step diagram chase in \eqref{eqn:diagram} there exists $z' \in Z_p(K_{s-1})$ such that $\phi_p^s(z) = \phi_p^s(z')$. But then $z - z' \in B_p(K_s)$, implying that $z \in W_p$.
\end{proof}

\begin{lemma}
\label{lem:tildeM-harmonictildeM}
With the same notation as above:
\[
\widetilde{M}^{s,t}_p(\mathcal{F}) = (\mathfrak{f}_p(K_s)\circ \phi_p^s)^{-1}(\mathcal{M}_p^{s,t}(\mathcal{F})).
\]
\end{lemma}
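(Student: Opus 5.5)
We need to show $\widetilde{M}^{s,t}_p(\mathcal{F}) = (\mathfrak{f}_p(K_s)\circ \phi_p^s)^{-1}(\mathcal{M}_p^{s,t}(\mathcal{F}))$. Here $\widetilde{M}^{s,t}_p = (\phi_p^s)^{-1}(M^{s,t}_p)$, so the claim reduces to
\[
(\phi_p^s)^{-1}(M^{s,t}_p(\mathcal{F})) = (\phi_p^s)^{-1}\bigl(\mathfrak{f}_p(K_s)^{-1}(\mathcal{M}^{s,t}_p(\mathcal{F}))\bigr).
\]
Since $\mathfrak{f}_p(K_s)$ is an isomorphism (Proposition~\ref{prop:f}), it therefore suffices to prove the equality of subspaces of $\HH_p(K_s)$
\[
\mathfrak{f}_p(K_s)(M^{s,t}_p(\mathcal{F})) = \mathcal{M}^{s,t}_p(\mathcal{F}).
\]
This is precisely the content of the first isomorphism in Proposition~\ref{prop:harmonic-barcode}, read as saying that the isomorphism is \emph{induced by} $\mathfrak{f}_p(K_s)$, i.e. that it is the restriction of $\mathfrak{f}_p(K_s)$. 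To keep the argument self-contained, I will verify this equality directly from the definitions.

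The first step is to transport persistent images. Using the commutative square \eqref{eqn:commutative} for the pair $(s-1,t)$, together with the fact that $\mathfrak{f}_p(K_{s-1})$ is surjective, we get
\[
\mathfrak{f}_p(K_t)(\HH^{s-1,t}_p(\mathcal{F})) = \mathfrak{f}_p(K_t)\, i_p^{s-1,t}(\HH_p(K_{s-1})) = \mathfrak{i}_p^{s-1,t}\, \mathfrak{f}_p(K_{s-1})(\HH_p(K_{s-1})) = \mathcal{H}^{s-1,t}_p(\mathcal{F}).
\]

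The second step is to transport preimages. Let $x \in \HH_p(K_s)$. Then $x \in M^{s,t}_p$ if and only if $i_p^{s,t}(x) \in \HH^{s-1,t}_p$. Because $\mathfrak{f}_p(K_t)$ is injective, this holds if and only if $\mathfrak{f}_p(K_t)\, i_p^{s,t}(x) \in \mathcal{H}^{s-1,t}_p$. By commutativity of \eqref{eqn:commutative} for $(s,t)$, the left side equals $\mathfrak{i}_p^{s,t}(\mathfrak{f}_p(K_s)(x))$. Hence $x \in M^{s,t}_p$ if and only if $\mathfrak{f}_p(K_s)(x) \in \mathcal{M}^{s,t}_p$. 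Since $\mathfrak{f}_p(K_s)$ is a bijection, this gives $\mathfrak{f}_p(K_s)(M^{s,t}_p) = \mathcal{M}^{s,t}_p$, and applying $(\phi_p^s)^{-1}$ to both sides yields the lemma.

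The only delicate points are bookkeeping. First, one needs the boundary conventions: when $s=0$ we take $K_{-1}=\emptyset$, and for $t \geq N$ we take $K_t = K$; both cases are consistent with the convention in Proposition~\ref{prop:simplified}. Second, one needs commutativity of \eqref{eqn:commutative} for arbitrary pairs $s' \le t'$, not only adjacent ones. This follows from Proposition~\ref{prop:functorial} applied to $K_{s'} \subset K_{t'}$. I expect no real obstacle beyond checking carefully which injectivity or surjectivity is used at each step.
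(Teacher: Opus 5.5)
Your argument is correct and is exactly the unwinding that the paper's one-line proof has in mind. You reduce via Eqn.~\eqref{eqn:tildeM} to $M^{s,t}_p(\mathcal{F})=\mathfrak{f}_p(K_s)^{-1}(\mathcal{M}^{s,t}_p(\mathcal{F}))$, and verify this from Definitions~\ref{def:barcode} and~\ref{def:harmonic-barcode} using the commutative squares of Proposition~\ref{prop:functorial}, surjectivity of $\mathfrak{f}_p(K_{s-1})$ and injectivity of $\mathfrak{f}_p(K_t)$. The only slip is cosmetic: $\mathfrak{f}_p(K_s)(M^{s,t}_p)$ and $\mathcal{M}^{s,t}_p$ are subspaces of $\mathcal{H}_p(K_s)$, not of $\HH_p(K_s)$.
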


\begin{proof}
    Follows from Definitions~\ref{def:barcode}, \ref{def:harmonic-barcode},
    Eqn.~\eqref{eqn:tildeM} and Proposition~\ref{prop:functorial}.
\end{proof}

\begin{lemma}
\label{lem:h}
    With the same hypothesis as Lemma~\ref{lem:W}, suppose that
    $h \in \mathcal{P}^{s,t}_p(\mathcal{F}) \subset C_p(K_s)$ is a harmonic representative of $b$.  
    Then
    \begin{eqnarray*}
    \mathrm{span}(h) \oplus  W_p &=& Z_p(K_s) \text{ if $t = \infty$},\\
        &=& \widetilde{M}^{s,t}_p(\mathcal{F}) \text{ else}.
    \end{eqnarray*}
\end{lemma}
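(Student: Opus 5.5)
We prove the statement by comparing dimensions and then checking that $h$ and $W_p$ intersect trivially. Throughout, $K=K_s$ and $K'=K_{s-1}$.

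\textbf{Step 1: $W_p$ has codimension one in the target.}
By Lemma~\ref{lem:W} (and its proof), $W_p=(\phi_p^s)^{-1}\bigl(i_p^{s-1,s}(\HH_p(K_{s-1}))\bigr)$. We treat the two cases separately.

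\emph{Case $t=\infty$.} Case~\ref{itemlabel:proof:prop:simplified:1} of the proof of Proposition~\ref{prop:simplified} shows that $M^{s,N}_p(\mathcal{F})=\HH^{s-1,s}_p(\mathcal{F})$. By Proposition~\ref{prop:harmonic-barcode} together with simplicity of $b$,
\[
\dim \HH_p(K_s)/M^{s,N}_p(\mathcal{F}) = \dim P^{s,\infty}_p(\mathcal{F}) = 1 .
\]
Since $\phi_p^s$ is surjective, taking preimages preserves codimension. Hence $W_p=\widetilde M^{s,N}_p(\mathcal{F})$ has codimension one in $Z_p(K_s)$.

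\emph{Case $t\neq\infty$.} Here $W_p=\widetilde M^{s,t-1}_p(\mathcal{F})$, and $P^{s,t}_p(\mathcal{F})=M^{s,t}_p(\mathcal{F})/M^{s,t-1}_p(\mathcal{F})$ is one-dimensional. Pulling back by the surjection $\phi_p^s$, we get that $\widetilde M^{s,t-1}_p(\mathcal{F})$ has codimension one in $\widetilde M^{s,t}_p(\mathcal{F})$.

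\textbf{Step 2: $h$ lies in the target but not in $W_p$.}
By Lemma~\ref{lem:orthogonality}, $h\perp W_p$. This lemma requires $h\in\Harm_p(K)\cap\Harm_p(K')^\perp$. By Proposition~\ref{prop:simplified}, $h\in\Harm_p(K_s)\cap(\mathrm{Im}\,\mathfrak{i}_p^{s-1,s})^\perp$, and Remark~\ref{rem:K'} identifies this with $\Harm_p(K)\cap\Harm_p(K')^\perp$, since equation~\eqref{eqn:two-step} holds by Step 1. Because $h\neq 0$ and $h\perp W_p$, we get $h\notin W_p$, so $\mathrm{span}(h)\cap W_p=0$ and the sum is direct and orthogonal.

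It remains to place $h$ in the target. If $t=\infty$, then $h\in\Harm_p(K_s)\subset Z_p(K_s)$ and there is nothing to check. If $t\neq\infty$, we use Lemma~\ref{lem:tildeM-harmonictildeM}. Since $h$ is harmonic, Proposition~\ref{prop:proj} gives $\mathfrak{f}_p(K_s)\circ\phi_p^s(h)=h$. Also $h\in\mathcal{P}^{s,t}_p(\mathcal{F})\subset\mathcal{M}^{s,t}_p(\mathcal{F})$. Therefore $h\in\widetilde M^{s,t}_p(\mathcal{F})$.

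\textbf{Step 3: conclusion.}
In each case $\mathrm{span}(h)\oplus W_p$ is a subspace of the target of dimension $\dim W_p+1$. By Step 1 this equals the dimension of the target, so the two coincide.

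The only delicate point is the codimension count in Step 1: each codimension statement on the homology side must transfer to the cycle side. This works because full preimages under a surjective linear map preserve quotient dimensions. All other steps are direct applications of the cited earlier results.
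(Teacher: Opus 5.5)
Your proof is correct. It has the same skeleton as the paper's: $W_p$ has codimension one in the target, $h$ lies in the target but not in $W_p$, and a dimension count finishes. You establish the two main ingredients differently.

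\begin{itemize}
\item \emph{Codimension.} The paper pulls back the codimension-one inclusion $\mathcal{M}^{s,N}_p\subset\mathcal{H}_p(K_s)$ (resp.\ $\mathcal{M}^{s,t-1}_p\subset\mathcal{M}^{s,t}_p$) along the orthogonal projection $\mathfrak{f}_p(K_s)\circ\phi_p^s$, using Lemma~\ref{lem:tildeM-harmonictildeM}. You pull back along $\phi_p^s$ from the homology side, which is just the definition \eqref{eqn:tildeM}. That is slightly more direct.
\item \emph{$h\notin W_p$.} The paper argues that $\mathfrak{f}_p(K_s)\circ\phi_p^s(h)=h$ is a nonzero vector orthogonal to $\mathcal{M}^{s,N}_p$ (resp.\ $\mathcal{M}^{s,t-1}_p$), so $h$ is not in the preimage $W_p$. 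You instead prove the stronger fact $h\perp W_p$, going through Proposition~\ref{prop:simplified}, Remark~\ref{rem:K'} and Lemma~\ref{lem:orthogonality}.
\end{itemize}

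Your route yields an \emph{orthogonal} direct sum, which is what the proof of Theorem~\ref{thm:simplified} later exploits. The cost is routing through more auxiliary results, with genericity entering a second time via Proposition~\ref{prop:simplified}. The paper's argument stays entirely inside the harmonic-barcode formalism and needs only the defining orthogonality of $\mathcal{P}^{s,t}_p(\mathcal{F})$.

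One small imprecision: you say \eqref{eqn:two-step} ``holds by Step 1.'' When $t\neq\infty$, Step 1 only gives $\dim M^{s,t}_p/M^{s,t-1}_p=1$. For \eqref{eqn:two-step} you also need $M^{s,t}_p=\HH_p(K_s)$, which follows from genericity since $\mu^{s,\infty}_p=0$. This is harmless, because the argument in Remark~\ref{rem:K'} never uses \eqref{eqn:two-step}: it works for any $h_1\in B_p(K)^\perp$.
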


\begin{proof}
\begin{enumerate}
    \item Case $t=\infty$. 
    Using the fact that $\mu^{s,\infty}_p(\mathcal{F}) = 1$, we have that 
    \begin{equation}
    \label{eqn:proof:lem:h:1}
    \dim \mathcal{H}_p(K_s)/\mathcal{M}^{s,N}_p(\mathcal{F})=
    1.
    \end{equation}

    Now observe that
    $\mathcal{M}_p^{s,N}(\mathcal{F}) \subset  \mathcal{H}_p(K_s)$, and 
    that the map 
    \[
      \mathfrak{f}_p(K_s) \circ \phi_p(K_s): Z_p(K_s) \rightarrow \mathcal{H}_p(K_s)
    \]
    is the orthogonal projection on to the subspace 
    $\mathcal{H}_p(K_s) \subset Z_p(K_s)$ (see Proposition~\ref{prop:proj}).
    Now $Z_p(K_s)$ (resp. $\widetilde{M}^{s,N}_p(\mathcal{F})$) is the inverse image 
    of $\mathcal{H}_p(K_s)$ (resp. $\mathcal{M}^{s,N}_p(\mathcal{F})$) under $\mathfrak{f}_p(K_s) \circ \phi_p(K_s)$ (Lemma~\ref{lem:tildeM-harmonictildeM}). The restriction $\mathfrak{f}_p(K_s) \circ \phi_p(K_s)\restriction_{\mathcal{H}_p(K_s)}$ is the identity map; hence,
    \[
    h \in Z_p(K_s), h \not\in  W_p = \widetilde{M}^{s,N}_p(\mathcal{F}),
    \]
    since $h \in \mathcal{H}_p(K_s) \cap \mathcal{M}^{s,N}(\mathcal{F})^\perp$.
    Since,
    $\mathcal{M}^{s,N}_p(\mathcal{F})$ is a codimension one subspace of $\mathcal{H}_p(K_s)$ (using \eqref{eqn:proof:lem:h:1}),
    $\widetilde{M}^{s,N}_p(\mathcal{F})$ is also a codimension one subspace of $Z_p(K_s)$ (pull-back under orthogonal projection), and we obtain that
    $Z_p(K_s) = \mathrm{span}(h) + W_p$, and the sum is direct since $h \not\in W_p$.
    \item Case $t \neq \infty$. 
    Using the fact that $\mu^{s,t}_p(\mathcal{F}) = 1$, we have that 
    \begin{equation}
    \label{eqn:proof:lem:h:2}
    \dim M^{s,t}(\mathcal{F})/M^{s,t-1}_p(\mathcal{F}) = 
    \dim \mathcal{M}^{s,t}(\mathcal{F})/\mathcal{M}^{s,t-1}_p(\mathcal{F})=
    1.
    \end{equation}

    Now observe that
    $\mathcal{M}_p^{s,t-1}(\mathcal{F}) \subset \mathcal{M}_p^{s,t}(\mathcal{F}) \subset \mathcal{H}_p(K_s)$, and 
    that the map 
    \[
    \mathfrak{f}_p(K_s) \circ \phi_p(K_s): Z_p(K_s) \rightarrow \mathcal{H}_p(K_s)
    \]
    is the orthogonal projection on to the subspace 
    $\mathcal{H}_p(K_s) \subset Z_p(K_s)$ (see Proposition~\ref{prop:proj}).
    Now $\widetilde{M}^{s,t}_p(\mathcal{F})$ (resp. $\widetilde{M}^{s,t-1}_p(\mathcal{F})$) is the inverse image 
    of $\mathcal{M}^{s,t}_p(\mathcal{F})$ (resp. $\mathcal{M}^{s,t-1}_p(\mathcal{F})$) under $\mathfrak{f}_p(K_s) \circ \phi_p(K_s)$ (Lemma~\ref{lem:tildeM-harmonictildeM}). The restriction $\mathfrak{f}_p(K_s) \circ \phi_p(K_s)\restriction_{\mathcal{M}^{s,t}_p(\mathcal{F})}$ is the identity map; hence,
    \[
    h \in \widetilde{M}^{s,t}_p(\mathcal{F}), h \not\in  W_p = \widetilde{M}^{s,t-1}_p(\mathcal{F}),
    \]
    since $h \in \mathcal{M}^{s,t}(\mathcal{F}) \cap \mathcal{M}^{s,t-1}(\mathcal{F})^\perp$.
    Since,
    $\mathcal{M}^{s,t-1}_p(\mathcal{F})$ is a codimension one subspace of $\mathcal{M}^{s,t}_p(\mathcal{F})$ (using \eqref{eqn:proof:lem:h:2}),
    $\widetilde{M}^{s,t-1}_p(\mathcal{F})$ is also a codimension one subspace of $\widetilde{M}^{s,t}_p(\mathcal{F})$ (pull-back under orthogonal projection), and we obtain that
    $\widetilde{M}^{s,t}(\mathcal{F}) = \mathrm{span}(h) + W_p$ and the sum is direct since $h \not\in W_p$.
\end{enumerate}
\end{proof}

We have a general definition of the set of essential simplices, $\Sigma(b)$, for any simple bar $b$ (Definition~\ref{def:essential}). We then defined the set of relatively essential simplices, $S_E(h)$, of a harmonic representative $h$ of a simple bar in a filtration $K' \subset K$ of length two (Definition~\ref{def:relative-essential}). 
The next lemma shows that these two sets coincide in the case when $h$ is a harmonic representative of a \emph{generic} (Definition~\ref{def:generic}) bar $b$.

\begin{lemma}
\label{lem:simplified}
    Suppose that $b=(s,t;1) \in B_p(\mathcal{F})$ be a generic bar in a finite filtration $\mathcal{F}$. 
    Let $h \in \mathcal{P}^{s,t}_p(\mathcal{F}) \subset C_p(K_s)$ be a harmonic representative of $b$.  Let $K = K_s, K' = K_{s-1}$.
    Then, 
    \begin{eqnarray*}
        \Sigma(b) &=& S_E(h).
    \end{eqnarray*}
\end{lemma}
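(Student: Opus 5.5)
By Lemma~\ref{lem:h}, $\Rep(b)$ is a coset-type set. Set $V = Z_p(K_s)$ if $t=\infty$ and $V = \widetilde{M}^{s,t}_p(\mathcal{F})$ otherwise. Then $\Rep(b) = V \setminus W_p$, using Lemma~\ref{lem:W} to identify the subtracted set with $W_p$, and $V = \mathrm{span}(h)\oplus W_p$. Consequently every $z\in\Rep(b)$ can be written uniquely as $z = \lambda h + w$ with $\lambda\neq 0$ and $w\in W_p$. Conversely, every such chain with $\lambda \neq 0$ lies in $V\setminus W_p = \Rep(b)$. So I will use the description
\[
\Rep(b) = \{\lambda h + w \mid \lambda\in\R\setminus\{0\},\ w\in W_p\}
\]
throughout, and the proof reduces to comparing supports of such chains with Definition~\ref{def:relative-essential}.

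\emph{Inclusion $S_E(h)\subset\Sigma(b)$.} Let $\sigma\in S_E(h)$ and $z=\lambda h+w\in\Rep(b)$. Then $z = \lambda(h + \lambda^{-1}w)$ with $\lambda^{-1}w\in W_p$. By Definition~\ref{def:relative-essential}, $\langle h+\lambda^{-1}w,\sigma\rangle\neq 0$, hence $\langle z,\sigma\rangle\neq0$ and $\sigma\in\supp(z)$. Since $z$ was arbitrary, $\sigma\in\Sigma(b)$.

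\emph{Inclusion $\Sigma(b)\subset S_E(h)$.} Let $\sigma\in\Sigma(b)$. For any $w\in W_p$, the chain $h+w$ lies in $\Rep(b)$ (take $\lambda=1$), so $\sigma\in\supp(h+w)$, i.e.\ $\langle h+w,\sigma\rangle\neq 0$. This is exactly the condition of Definition~\ref{def:relative-essential}, so $\sigma\in S_E(h)$. (Lemma~\ref{lem:linear} could alternatively be used to phrase both directions as ``$\langle h,\sigma\rangle\neq0$ and $\sigma\perp W_p$'', but it is not needed.)

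The only delicate step is the identification of $\Rep(b)$ in the first paragraph. It requires checking that Definition~\ref{def:rep}, applied to a generic bar, subtracts exactly $W_p$: $\widetilde{M}^{s,N}_p(\mathcal{F})$ when $t=\infty$ and $\widetilde{M}^{s,t-1}_p(\mathcal{F})$ otherwise, which is Lemma~\ref{lem:W}. It also requires the direct sum decomposition of Lemma~\ref{lem:h}, so that ``not in $W_p$'' is equivalent to ``nonzero $h$-component''. Genericity enters only through these two lemmas, via Proposition~\ref{prop:simplified}.
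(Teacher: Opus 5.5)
Your proposal is correct and is essentially the paper's proof. Both arguments use Lemma~\ref{lem:W} and Lemma~\ref{lem:h} to write every representative uniquely as $\lambda h + w$ with $\lambda \neq 0$ and $w \in W_p$, and then check both inclusions directly against Definition~\ref{def:relative-essential}; the only cosmetic difference is that you describe $\Rep(b)$ uniformly, whereas the paper treats $t=\infty$ and $t\neq\infty$ separately for $\Sigma(b)\subset S_E(h)$.
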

\begin{proof}

We first prove that $S_E(h) \subset \Sigma(b)$. Suppose that $\sigma \in S_E(h)$, and $z \in \Rep(b)$. Then, using Lemma~\ref{lem:h}, there is a unique expression 
$z = z_1 + z_2$, $z_1 = c\cdot h\in \mathrm{span}(h)$, $z_2 \in W_p$.
Moreover, since $\Rep(b) \cap W_p = \emptyset$, $c \neq 0$.
Since $\sigma \in S_E(h)$, $\langle c^{-1} z, \sigma\rangle \neq 0$ which implies that $\sigma \in \supp(z)$. This proves that $\sigma \in \Sigma(b)$.
    
We break the proof of $\Sigma(b) \subset  S_E(h)$ into two cases.
\begin{enumerate}
\item Case $t = \infty$.
Let $\sigma\in \Sigma(b)$. Then,
    $\sigma \in \supp(z)$ for every $z \in \Rep(b) = Z_p(K_s) \setminus \widetilde{M}^{s,N}(\mathcal{F})$. Let $w \in W_p = \widetilde{M}^{s,N}(\mathcal{F})$. Then, $h + w \in \Rep(b)$ using Lemma~\ref{lem:h}, and 
    $\sigma \in \supp(h+w) \Leftrightarrow \langle h+w,\sigma\rangle \neq 0$.
    So $\sigma \in S_E(h)$.

\item Case $t \neq \infty$. Let $\sigma\in \Sigma(b)$. Then,
    $\sigma \in \supp(z)$ for every $z \in \Rep(b) = \widetilde{M}^{s,t}(\mathcal{F}) \setminus \widetilde{M}^{s,t-1}(\mathcal{F})$. Let $w \in W_p = \widetilde{M}^{s,t-1}(\mathcal{F})$. Then, $h + w \in \Rep(b)$ using Lemma~\ref{lem:h}, and 
    $\sigma \in \supp(h+w) \Leftrightarrow \langle h+w,\sigma\rangle \neq 0$.
    So $\sigma \in S_E(h)$.
\end{enumerate}
\end{proof}

\begin{remark}
\label{rem:efficiency}
    We note that Lemmas~\ref{lem:linear} and \ref{lem:simplified} can serve as a basis for an \emph{efficient algorithm for computing the set $\Sigma(b)$} for a generic bar $b$. The original definition of $\Sigma(b)$ involves taking the intersection of the supports of all cycles in $\Rep(b)$ which is an infinite set. On the other hand Lemma~\ref{lem:linear} reduces the membership question of a simplex $\sigma$ in $S_E(h)$ to a finite dimensional linear algebra problem. Lemma~\ref{lem:simplified} then implies that for a generic bar $b$ with
     harmonic representative $h$, $\sigma \in S_E(h) \Leftrightarrow \sigma \in \Sigma(b)$. An implementation of the algorithm based on the above observation will be a very useful addition to any harmonic persistent homology software package. 
\end{remark}

In view of Proposition~\ref{prop:simplified} and Lemma~\ref{lem:simplified}, in order to prove Theorem~\ref{thm:main} it suffices to prove the following theorem.

\begin{theorem}
\label{thm:simplified}
Let \(K\) be a finite simplicial complex and \(K'\subseteq K\) a subcomplex.
Assume
\begin{equation}
\label{eqn:thm:simplified}
\dim_{\mathbb R} H_1(K)/i_1(H_1(K'))=1.
\end{equation}
Let $h$ be a non-zero element of \(\mathcal H_1(K)\) spanning
\(\mathcal H_1(K)\cap \mathfrak{i}_1(\mathcal H_1(K'))^\perp\).
\hide{
Then for every \(\sigma\in S_E(h)\) and every \(\tau\in S_N(h)\),
\[
|\langle h,\tau\rangle|<|\langle h,\sigma\rangle|.
\]
}
Then for every \(\sigma\in S_E(h)\) and every \(\tau\in K^{[1]}, \tau \neq \sigma\),
\[
|\langle h,\tau\rangle| \leq |\langle h,\sigma\rangle|,
\]
with equality if and only if $\tau \in S_E(h)$.
\end{theorem}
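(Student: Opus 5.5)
The plan is to rephrase everything in terms of the graph $G$ given by the $1$-skeleton of $K$. In this language $Z_1(K)$ is the cycle space of $G$. We know $h\in Z_1(K)$, and by Lemma~\ref{lem:orthogonality} also $h\perp W_1$. The hypothesis \eqref{eqn:thm:simplified} gives that $W_1$ has codimension one in $Z_1(K)$, which is the argument of Lemma~\ref{lem:h}. Hence $Z_1(K)=\mathrm{span}(h)\oplus W_1$ is an orthogonal decomposition. Let $\lambda:Z_1(K)\to\R$ be the linear functional with $z-\lambda(z)h\in W_1$; explicitly $\lambda(z)=\langle h,z\rangle/\|h\|^2$. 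By Lemma~\ref{lem:linear}, an edge $\sigma$ is in $S_E(h)$ iff $\langle h,\sigma\rangle\ne 0$ and $\sigma\perp W_1$. In that case we get, for every $z\in Z_1(K)$,
\[
\langle z,\sigma\rangle=\lambda(z)\langle h,\sigma\rangle .
\]

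\emph{Step 1 (all essential coefficients are equal in absolute value).} The cycle space is spanned by simple cycles, and $\lambda\neq 0$ on $Z_1(K)$. So there is a simple cycle $C$ with $\lambda(C)\ne0$. For every essential $\sigma$ the displayed identity gives $\langle C,\sigma\rangle\neq0$, so $\sigma\in C$ with coefficient $\pm1$. Therefore $|\langle h,\sigma\rangle|=1/|\lambda(C)|=:c$ for all $\sigma\in S_E(h)$. Running the same argument with an arbitrary simple cycle $C$ (when $S_E(h)\ne\emptyset$) shows that $\lambda(C)\in\{0,\pm 1/c\}$. Moreover $\lambda(C)\ne0$ holds iff $C$ contains some essential edge, iff $C$ contains all of them.

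\emph{Step 2 (domination via a conformal cycle decomposition).} Write the circulation $h$ as $h=\sum_i\alpha_iC_i$ with $\alpha_i>0$, where each $C_i$ is a simple directed cycle conformal to $h$, i.e.\ $C_i$ agrees in sign with $h$ on each of its edges. This is the standard flow decomposition. No $C_i$ can lie in $W_1$: otherwise $0=\langle h,C_i\rangle=\sum_{e\in C_i}|h_e|>0$. Hence every $C_i$ has $\lambda(C_i)\neq0$, so by Step 1 every $C_i$ contains all essential edges. Conformality at an essential $\sigma$ forces $\lambda(C_i)=+1/c$. Summing gives $|h_\sigma|=\sum_i\alpha_i=c$. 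For any edge $\tau$ we then have $|h_\tau|=\sum_{i:\tau\in C_i}\alpha_i\le\sum_i\alpha_i=c$, with equality iff $\tau$ lies in every $C_i$. Together with Step 1 this proves the inequality and the ``if'' direction of the equality statement.

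\emph{Step 3 (equality forces essentiality) is the main obstacle.} Suppose $|h_\tau|=c$, so that $\tau$ lies in every $C_i$ with a consistent orientation; we must show $\tau\perp W_1$. Suppose instead that some simple cycle $D\in W_1$ passes through $\tau$. My first attempt is a cut argument, following the hint about ``an elementary graph-theoretic cut''. The common edges of all the $C_i$, i.e.\ the edges with $|h_e|=c$, should behave like a cut separating the flow. Concretely, form $C_1\mp D$ with the sign chosen so that $\tau$ cancels. This is a cycle with $\lambda=1/c$ and $\tau\notin\supp$, so its conformal decomposition yields a simple cycle $E$ with $\lambda(E)\neq0$ that avoids $\tau$. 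Then $\langle h,E\rangle=\lambda(E)\|h\|^2=\langle h,C_1\rangle=\sum_{e\in C_1}|h_e|$. I would try to contradict this using $|h_e|\le c$ and the fact that $E$ must route the full flow value $c$ around $\tau$. If this stalls, the fallback is the variational characterization: $h$ is the minimum-norm chain with the boundary prescribed on $K\setminus K'$ modulo boundaries. Then a cut of $G$ through $\tau$ and avoiding $D$ would allow rerouting part of the flow off $\tau$ and strictly decreasing $\|h\|$, which is a contradiction.
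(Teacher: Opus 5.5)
Your Steps 1 and 2 are correct. Step 2 reaches the weak inequality by a genuinely different route. The paper uses a level-set cut of a Laplacian potential; you use a conformal decomposition $h=\sum_i\alpha_iC_i$. The observation $\lambda(C_i)=\sum_{e\in C_i}|h_e|/\|h\|^2>0$ forces every $C_i$ through every essential edge, so $|h_\tau|\le\sum_i\alpha_i=c$. Step 1 is essentially the paper's argument for $\tau\in S_E(h)$.

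\textbf{The gap is Step 3.} The implication $|h_\tau|=c\Rightarrow\tau\in S_E(h)$, i.e.\ strict inequality for non-essential edges, is the real content of the theorem, and it is left unproved.

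Your first attempt uses four facts: all conformal cycles pass through $\tau$; all coefficients are at most $c$; some simple cycle $E\ni\sigma$ avoids $\tau$ and differs from $C_1$ by an element of $W_1$; and $\langle h,E\rangle=\langle h,C_1\rangle$. The first three hold equally for $z=C_1$ itself. That $z$ is a representative ($z\in Z_1(K)\setminus W_1$) with $|\langle z,\tau\rangle|=|\langle z,\sigma\rangle|$ even though $\tau$ is non-essential. So those facts cannot yield a contradiction. The fourth is merely $h\perp E-C_1$, one instance of $h\perp W_1$, and you derive nothing from it.

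The fallback is too vague to count. The variational characterization you quote is not correctly formulated (the usable one is given below), and no reroute is specified. Moreover, a cut containing $\tau$ but avoiding a cycle $D$ through $\tau$ cannot exist, since a cycle meets every cut in an even number of edges.

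\textbf{The missing idea is to read $h\perp W_1$ as the existence of a potential.}
\begin{enumerate}
\item Your identity $\langle z,\sigma\rangle=\lambda(z)\langle h,\sigma\rangle$ gives $W_1=\{z\in Z_1(K):\langle z,\sigma\rangle=0\}=Z_1(G')$, where $G'=G\setminus\{\sigma\}$.
\item This also justifies your simple cycle $D\in W_1$ through $\tau$: $\tau\not\perp W_1$ by Lemma~\ref{lem:linear}, and $Z_1(G')$ is spanned by simple cycles.
\item Hence the restriction of $h$ to the edges of $G'$ lies in $Z_1(G')^\perp=\operatorname{im}\partial_{G'}^*$. That is, $\langle h,e\rangle=\varphi(q)-\varphi(p)$ for every edge $e:p\to q$ of $G'$. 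This is exactly the paper's statement that $h/\langle h,\sigma\rangle-\sigma$ is the minimum-norm chain on $G'$ with boundary $a-b$.
\end{enumerate}

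With $\varphi$ in hand, your Step 3 closes quickly.
\begin{enumerate}
\item Orient $\tau:u\to v$ so that $\langle h,\tau\rangle>0$, and pick $c_0$ with $\varphi(u)<c_0<\varphi(v)$.
\item When you traverse a conformal $C_i$ in its direction, $\varphi$ strictly increases along the path $C_i\setminus\sigma$. So at most one edge of that path has endpoints on opposite sides of the level $c_0$.
\item The path $D\setminus\tau$ runs from $v$ to $u$ inside $G'$, so it contains an edge $e\neq\tau$ with endpoints on opposite sides of $c_0$.
\item Then $\langle h,e\rangle\neq0$, so $e\in C_j$ for some $j$.
\item If $\tau$ lay on every $C_i$, then $C_j\setminus\sigma$ would contain two such edges, a contradiction.
\end{enumerate}
The paper packages the same mechanism as the cut estimate of Lemma~\ref{lem:graph}. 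There the maximum principle replaces your monotone-path argument in placing $a$ and $b$ on opposite sides of the level cut.
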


\begin{remark}
    Note that Theorem~\ref{thm:simplified} only needs assumption \eqref{eqn:thm:simplified} and does not need the notion of genericity of bars. Later we will use Theorem~\ref{thm:simplified} to prove Theorem~\ref{thm:main}, and the genericity assumption will enter through Proposition~\ref{prop:simplified} and Lemma~\ref{lem:simplified}.
\end{remark}

\subsection{Proof of Theorem~\ref{thm:simplified}}
\label{subsec:simplified}

In this section we prove Theorem~\ref{thm:simplified}. 
We use the notation and hypothesis of Theorem~\ref{thm:simplified} throughout this section.
The proof uses the fact that normalized harmonic representatives are minimum-energy flows with prescribed boundary. Such flows are graph Laplacian potentials. The maximum principle then converts coefficient estimates into cut estimates.
We summarize below the key steps which implements the above strategy. 
\begin{enumerate}[label=\arabic*.]

\item
We characterize the normalized harmonic representative ($h$ multiplied by $\frac{1}{\la h,\sigma \ra}$), as the unique norm-minimizer of a certain translate $\mathcal{A}_\sigma$ of $W_1$ in $C_1(K)$.
\item
Minimizing norm over $\mathcal{A}_\sigma$ is then shown to be equivalent to minimizing 
norm over all of $C_1(G')$ (where $G'$ is the $1$-skeleton of $K$ minus the edge $\sigma$) subject to the constraint $\partial_1 z = \partial_1 \sigma$.
\item
We prove that for such a minimizer $z$, $|\la z , \tau \ra| < 1$ for any edge $\tau$ that is not a bridge in $G'$
(Lemma~\ref{lem:graph}). 
\item
Finally for edges $\tau \in S_N(h), \la h,\tau\ra \neq 0$, we show that $\tau$ cannot be a bridge edge (Lemma~\ref{lem:bridge}), which is sufficient to complete the proof.  
\end{enumerate}

In what follows we are going to identify for any finite simplicial complex $K$,
the $i$-th chain group $C_i(K)$, with its dual vector space $C^i(K)$, via the 
isomorphism 
\[
C_i(K) \rightarrow C^i(K), \qquad \varphi \longmapsto \la \cdot, \varphi \ra.
\]

We need the following result which is well-known in the theory of electrical networks where it is mostly expressed in terms of electrical currents and resistance (see for example \cite{Doyle-Snell-book,Lyons-Peres-book}). We avoid the electrical terminology and include a proof for the sake of completeness.

\begin{lemma}[Minimum-norm prescribed-boundary chains]
\label{lem:graph}
Let \(G\) be a finite oriented graph. Let \(\sigma\) be an oriented edge of
\(G\), with
$
\partial\sigma=b-a
$,
and let
$
G'=G\setminus\{\sigma\}
$.
Suppose 
\(x\in C_1(G')\) is the unique minimum-norm solution of
$
\partial_{G'} x=a-b
$,
assuming the solution set is non-empty.
Then, for every edge \(\tau\) of \(G'\) that is not a bridge separating \(a\)
from \(b\),
$
|\langle x,\tau\rangle|<1
$.
In particular, if \(\tau\) lies in a cycle of \(G'\), then
$
|\langle x,\tau\rangle|<1
$.
\end{lemma}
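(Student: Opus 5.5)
The minimum-norm solution $x$ is orthogonal to the cycle space $Z_1(G')$. Such an $x$ is a potential flow: there is a function $u$ on the vertices of $G'$ with $\langle x,\tau\rangle = u(\text{head}(\tau)) - u(\text{tail}(\tau))$ for every edge $\tau$. The plan is to obtain $u$ by the usual Lagrange multiplier argument and then bound flows across edges by a cut argument.

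\textbf{Step 1 (potential form).} Solutions of $\partial_{G'} x = a-b$ form the affine space $x_0 + Z_1(G')$, so the minimizer satisfies $x \perp Z_1(G')$. Since $Z_1(G') = \Ker\partial$, we get $x \in (\Ker \partial)^\perp = \mathrm{Im}\,\partial^*$, so $x = \partial^* u$ for some $u \in C_0(G')$. Under the identification of chains with cochains, this says $x(\tau) = u(\text{head}) - u(\text{tail})$ for each oriented edge $\tau$. Then $\partial\partial^* u = a-b$ means $u$ is harmonic, i.e.\ equal to the average of its neighbours, at every vertex other than $a$ and $b$. It is also constant on each connected component not containing $a,b$, where $x$ vanishes. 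With the sign convention here, $b$ has net inflow $-1$ (it is a source) and $a$ is a sink, so $x$ flows from $b$ to $a$, and $u$ increases along the flow.

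\textbf{Step 2 (maximum principle).} Work on the component $C$ containing $a$ and $b$; they lie in the same component because the solution set is non-empty. By the maximum principle, $u$ attains its extremes on $C$ at $a$ and $b$. Set $m=u(b)\le M=u(a)$; if $m=M$ then $x=0$, contradicting $a\neq b$. For $c\in(m,M)$, the superlevel set $S_c=\{v\in C: u(v)\ge c\}$ contains $a$ but not $b$. Because $\partial x = a-b$, the net flow of $x$ across the cut $\delta(S_c)$ is exactly $1$, and every edge of the cut carries flow in the same direction, into $S_c$. Hence every cut edge $\tau$ has $|x(\tau)|\le 1$, with equality iff $\tau$ is the only edge of $\delta(S_c)$ carrying nonzero flow.

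\textbf{Step 3 (strictness).} Fix an edge $\tau=[p,q]$ with $x(\tau)\neq 0$, say $u(p)<u(q)$; edges with zero flow trivially satisfy $|x(\tau)|<1$. Choose $c$ in the open interval $(u(p),u(q))\cap(m,M)$, which is nonempty since $m\le u(p)<u(q)\le M$, and avoiding the finitely many values of $u$. Then $\tau\in\delta(S_c)$. Suppose $|x(\tau)|=1$. Then every other edge of $\delta(S_c)$ has zero flow, and since $c$ avoids all values of $u$, a zero-flow edge has equal potentials at its ends and cannot cross the cut. So $\tau$ is the only edge of $C$ between $S_c\ni a$ and $C\setminus S_c\ni b$, i.e.\ $\tau$ is a bridge separating $a$ from $b$. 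This contradicts the hypothesis, so $|x(\tau)|<1$. An edge lying in a cycle of $G'$ is never a bridge, which gives the final sentence of the lemma.

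The main obstacle is Step 3: choosing $c$ so that no zero-flow edge crosses the cut, which is exactly what lets equality force the bridge property. Taking $c$ generic (avoiding all potential values) handles this. Signs are the other point needing care, but the bound concerns absolute values and all cut edges point the same way, so any orientation convention works.
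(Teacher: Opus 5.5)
Your proposal is correct and follows the paper's proof essentially step for step. You write the minimizer as $x=\partial^*u$, use the maximum principle on the component containing $a,b$ to put the extremes of $u$ at $a$ and $b$, and take a level-set cut between the endpoint potentials of $\tau$; the flow across that cut sums to $1$ with every crossing edge contributing positively, so equality forces $\tau$ to be the only crossing edge and hence an $a$--$b$ bridge. Your generic choice of $c$ is harmless but unnecessary: any edge crossing a level-set cut already has distinct endpoint potentials and so nonzero flow, which is how the paper concludes.
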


\begin{proof}
Let
$D$ denote  $\partial_{G'}:C_1(G')\to C_0(G')
$
(the boundary map of \(G'\)). 
The assumption that there exists $x$ such that
$Dx=a-b$, implies in particular that $[a-b] \in B_0(G')$, and so 
$[a] = [b] \in \HH_0(G')$. Hence,
$a,b$ belong to the same connected component of $G'$.

The affine space of solutions is
$
\{u\in C_1(G'):Du=a-b\}
$.
The unique minimum-norm solution \(x\) is orthogonal to the homogeneous
solution space
$
\ker D
$.
Indeed, if \(c\in\ker D\), then \(x+\lambda c\) is again a solution for every
\(\lambda\in\R\). Since \(x\) minimizes the norm, the function
$
\lambda\mapsto \|x+\lambda c\|^2
$
has derivative zero at \(\lambda=0\), so
$
\langle x,c\rangle=0
$,
Therefore
$
x\in(\ker D)^\perp
$.

Since we are in a finite-dimensional inner product space,
$
(\ker D)^\perp=\im D^*
$.
Hence there exists a function
$
\varphi\in C_0(G')
$
such that
$
x=D^*\varphi
$.

For an oriented edge \(e:u\to v\), this means
$
\langle x,e\rangle
=
\varphi(v)-\varphi(u)
$.

Now let \(\tau:u\to v\) be an edge of \(G'\). If
$
\langle x,\tau\rangle=0
$,
then there is nothing to prove. So assume
$
\langle x,\tau\rangle\neq 0
$.

After reversing the orientation of \(\tau\) if necessary, we may assume
$
\langle x,\tau\rangle>0
$.
Thus
$
\varphi(v)-\varphi(u)>0.
$

Choose a real number \(c\) such that
$
\varphi(u)<c<\varphi(v)
$,
and define the vertex subset
\[
A=\{p\in V(G'):\varphi(p)>c\}.
\]

Then \(v\in A\) and \(u\notin A\), so \(\tau\) crosses the cut
$
A\mid A^c
$.

We claim that \(a\in A\) and \(b\notin A\).

To see this, observe that
$
Dx=a-b
$,
and 
\(x=D^*\varphi\). So 
$
DD^*\varphi=a-b
$.
At every vertex \(p\neq a,b\), we have
$
(DD^*\varphi)(p)=0
$.
Equivalently, \(\varphi(p)\) is the average of the values of \(\varphi\) on
the neighbors of \(p\)
(The operator \(DD^*\) is the graph Laplacian.) 

Let $C$ be the connected component of $G'$ containing $a,b$, and let $S$
$\operatorname{argmax}$ set of $\varphi\restriction_C$.
Then for all vertices $p$ of $C$, with $p\neq a,b$, $\varphi(p)$ equals the average value of the neighbors and so $S$ is closed under passing to neighbors at such vertices. Since $(DD^*\varphi)(b) = -1$, $\varphi(b)$ is strictly below the average of its neighbors, $b \not\in S$.  This implies that $a \in S$, and so $\max_C \varphi = \varphi(a)$.
By a symmetric argument $\min_C \varphi = \varphi(b)$.

Since \(\varphi(v)>\varphi(u)\), the component containing \(\tau\) is not a
component on which \(\varphi\) is constant.
On any  component $C$ of $G'$ such that $a,b \not\in C$, we have
$DD^*\varphi\restriction_C = 0$ and hence $||D^*\varphi||_C^2 = 0$, and so $\varphi$ is constant on $C$. 
Therefore,
the component containing \(\tau\) is the component
containing \(a\) and \(b\). On this component, the maximum occurs at \(a\) and
the minimum occurs at \(b\). Hence
\[
\varphi(a)\geq \varphi(v)>c>\varphi(u)\geq \varphi(b),
\]
so
\[
a\in A,
\qquad
b\notin A.
\]

Let \(\chi_A\in C_0(G')\) be the characteristic function of \(A\). Since
$
Dx=a-b
$,
we get
\[
\langle Dx,\chi_A\rangle
=
\langle a-b,\chi_A\rangle
=
1.
\]
By adjointness,
\[
\langle Dx,\chi_A\rangle
=
\langle x,D^*\chi_A\rangle.
\]
Thus
$
\langle x,D^*\chi_A\rangle=1
$.

For an oriented edge \(e:p\to q\),
\[
\langle D^*\chi_A,e\rangle
=
\chi_A(q)-\chi_A(p).
\]
This is nonzero exactly when \(e\) crosses the cut \(A\mid A^c\).

Moreover, because \(A\) is a strict upper level set of \(\varphi\), every edge
crossing the cut has its \(\varphi\)-value larger on the \(A\)-side and smaller
on the \(A^c\)-side. Therefore each nonzero summand in
\[
\langle x,D^*\chi_A\rangle
=
\sum_e \langle x,e\rangle\langle D^*\chi_A,e\rangle
\]
is positive, and is equal to
$
|\langle x,e\rangle|
$.

Consequently,
\[
1
=
\sum_{e\text{ crosses }A\mid A^c}
|\langle x,e\rangle|.
\]

Since \(\tau\) is one of the edges crossing this cut, we obtain
\begin{equation}
\label{eqn:proof:lem:graph:1}
|\langle x,\tau\rangle|\leq 1.
\end{equation}

If equality held, then \(\tau\) would be the only edge crossing the cut
\(A\mid A^c\). 
If $e = (p,q)$ was another edge crossing the cut, then $\varphi(q) > c \geq \varphi(p)$, and hence $\la x, e \ra > 0$. So for every crossing edge $|\la x, e \ra| > 0$. This together with Eqn. \eqref{eqn:proof:lem:graph:1}, 
and the assumption $|\langle x,\tau\rangle|= 1$ imply that $\tau$ is the only crossing edge. But then deleting \(\tau\) will disconnect \(a\) from
\(b\), and so \(\tau\) will be an \(a\)-\(b\) bridge.

Therefore, if \(\tau\) is not an \(a\)-\(b\) bridge, then
\[
|\langle x,\tau\rangle|<1.
\]

Finally, if \(\tau\) lies in a cycle of \(G'\), then \(\tau\) is not a bridge.
Hence
$
|\langle x,\tau\rangle|<1
$.
\end{proof}

\begin{lemma}[Cycle coefficient implies non-bridge]
\label{lem:bridge}
Let \(G\) be a finite graph, and let \(c\in Z_1(G)\). If
\[
\langle c,\tau\rangle\neq 0
\]
for an edge \(\tau\), then \(\tau\) is not a bridge of \(G\).
\end{lemma}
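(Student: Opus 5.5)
The plan is to use the standard characterization of bridges via cuts and argue by contradiction. Suppose $\tau$ is a bridge of $G$, oriented as $\tau: u \to v$. Removing $\tau$ disconnects $u$ from $v$. Let $A$ be the vertex set of the connected component of $G\setminus\{\tau\}$ containing $v$. Then $u \notin A$, and $\tau$ is the \emph{only} edge of $G$ with exactly one endpoint in $A$. Indeed, any other such edge would lie in $G\setminus\{\tau\}$ and join a vertex of $A$ to a vertex outside $A$, contradicting that $A$ is a connected component there.

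Next I will pair the cycle with the coboundary of the characteristic function, exactly as in the proof of Lemma~\ref{lem:graph}. Let $\chi_A \in C_0(G)$ be the indicator of $A$, and let $D=\partial_G$. For an oriented edge $e: p\to q$ we have $\langle D^*\chi_A, e\rangle = \chi_A(q)-\chi_A(p)$. This is nonzero only for edges crossing the cut $A \mid A^c$, so $D^*\chi_A = \pm \tau$; with the chosen orientation, $D^*\chi_A = \tau$.

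Since $c \in Z_1(G)$, adjointness gives
\[
\langle c, \tau\rangle = \langle c, D^*\chi_A\rangle = \langle Dc, \chi_A\rangle = 0,
\]
which contradicts $\langle c,\tau\rangle \neq 0$. Hence $\tau$ is not a bridge.

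Equivalently, a cycle has zero net flow across every cut, and a bridge is by itself a whole cut. There is no real obstacle here; the only care needed is the orientation sign and the observation that the cut consists of $\tau$ alone. If "bridge" is meant in the component-relative sense, i.e. deleting $\tau$ increases the number of components, the same argument applies within the component containing $\tau$.
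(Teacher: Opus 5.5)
Your proof is correct and follows the paper's own argument. As in the paper, you pair the cycle $c$ with $\partial^*\chi_A$, where $\chi_A$ is the indicator of one side of the cut cut out by the bridge; this coboundary is $\pm\tau$, so $\langle c,\tau\rangle=\pm\langle \partial c,\chi_A\rangle=0$. Your choice of $A$ as the component of $G\setminus\{\tau\}$ containing $v$ is slightly more careful than the paper's phrasing, since it also covers a disconnected $G$ directly.
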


\begin{proof}
Suppose \(\tau\) were a bridge. Removing \(\tau\) separates the vertices of
\(G\) into two nonempty subsets \(A\) and \(A^c\), with \(\tau\) the only edge
crossing the cut.

Let \(\chi_A\in C_0(G)\) be the characteristic function of \(A\). Since
$
c\in Z_1(G)
$,
we have
$
\partial c=0
$.
Therefore
\[
0
=
\langle \partial c,\chi_A\rangle
=
\langle c,\partial^*\chi_A\rangle.
\]
But \(\partial^*\chi_A\) is supported only on edges crossing the cut, and
\(\tau\) is the only such edge. Thus
\[
\langle c,\partial^*\chi_A\rangle
=
\pm \langle c,\tau\rangle.
\]
Hence
$
\langle c,\tau\rangle=0,
$
contradicting the assumption. Therefore \(\tau\) is not a bridge.
\end{proof}

\begin{proof}[Proof of Theorem~\ref{thm:simplified}]
Fix
\[
\sigma\in S_E(h).
\]
By the characterization of relative essentiality (Lemma~\ref{lem:linear}),
\[
\langle h,\sigma\rangle\neq 0
\qquad\text{and}\qquad
\sigma\perp W_1.
\]

Set
\[
h_\sigma:=\langle h,\sigma\rangle,
y:=\frac{h}{h_\sigma}.
\]
Then
$
\langle y,\sigma\rangle=1
$.

Let
$
\ell_\sigma:Z_1(K)\to\R
$
be the linear functional defined by 
\[
\ell_\sigma(z)=\langle z,\sigma\rangle.
\]

Since
$
\sigma\perp W_1
$,
the functional \(\ell_\sigma\) vanishes on \(W_1\). Hence it descends to a
linear functional
$
\overline{\ell}_\sigma:
Z_1(K)/W_1
\to
\R
$.

Moreover,
\[
\overline{\ell}_\sigma([h])
=
\langle h,\sigma\rangle
=
h_\sigma
\neq 0.
\]

It follows from the hypothesis \eqref{eqn:thm:simplified} that
$
\dim Z_1(K)/W_1=1
$,
and hence 
the functional \(\overline{\ell}_\sigma\) is injective. Therefore
$
\ker\ell_\sigma=W_1.
$

Equivalently,
\[
W_1
=
\{z\in Z_1(K):\langle z,\sigma\rangle=0\}.
\]

Define the affine space
\[
\mathcal A_\sigma
=
\{z\in Z_1(K):\langle z,\sigma\rangle=1\}.
\]

Since
$
\langle y,\sigma\rangle=1
$,
we have
$
y\in\mathcal A_\sigma.
$

If
$
z\in\mathcal A_\sigma
$,
then
$
\langle z-y,\sigma\rangle=0
$
and
$
z-y\in Z_1(K)
$.
Therefore
$
z-y\in W_1
$.
Hence
$
\mathcal A_\sigma=y+W_1
$.

Using Lemma~\ref{lem:orthogonality},
$
h\perp W_1
$,
which implies that 
$
y\perp W_1
$.
Therefore \(y\) is the unique minimum-norm element of \(\mathcal A_\sigma\):
\[
y
=
\argmin
\left\{
\|z\|^2:
z\in Z_1(K),\ \langle z,\sigma\rangle=1
\right\}.
\]

Let
$
G=K^{(1)}
$,
and let
$
G'=G\setminus\{\sigma\}
$.

Let
$
\partial\sigma=b-a
$.

For any
$
z\in\mathcal A_\sigma
$,
the chain
$
z-\sigma
$
is supported on \(G'\), and
\[
\partial(z-\sigma)
=
-\partial\sigma
=
a-b.
\]

Since the coefficient of \(\sigma\) is fixed to be \(1\), minimizing
$
\|z\|^2
$
over \(z\in\mathcal A_\sigma\) is equivalent to minimizing
$
\|z-\sigma\|^2
$
among chains on \(G'\) with boundary \(a-b\).

Thus
$
x:=y-\sigma
$
is the unique minimum-norm chain in \(C_1(G')\) satisfying
$
\partial x=a-b
$.

We now break the proof into two cases.
\begin{enumerate}
\item Case $\tau\in S_N(h)$.

There are now two sub-cases.
\begin{enumerate}
\item
Suppose
\[
\langle h,\tau\rangle=0.
\]
Then
\[
|\langle h,\tau\rangle|
=
0
<
|\langle h,\sigma\rangle|,
\]
because \(\sigma\in S_E(h)\) implies
$
\langle h,\sigma\rangle\neq 0
$.

\item
Now suppose 
$
\langle h,\tau\rangle\neq 0.
$

Since
$
\tau\in S_N(h)
$,
the edge \(\tau\) is not relatively essential. Because its \(h\)-coefficient is
nonzero, 
Lemma~\ref{lem:linear}
implies that
$
\tau\not\perp W_1
$.
Thus there exists
$
w\in W_1
$
such that
$
\langle w,\tau\rangle\neq 0
$.

Since
$
\sigma\in S_E(h),
$
we have
$
\sigma\perp W_1.
$
Therefore
$
\langle w,\sigma\rangle=0.
$

Also,
$
w\in W_1\subseteq Z_1(K)
$,
and 
so \(w\) is a cycle in \(K\). Since its \(\sigma\)-coefficient is zero, \(w\)
is a cycle in \(G'\). Since
$
\langle w,\tau\rangle\neq 0
$,
Lemma~\ref{lem:bridge}
implies that \(\tau\) is not a bridge of \(G'\).

By 
Lemma~\ref{lem:graph}
applied to
$
x=y-\sigma
$,
we get
$
|\langle x,\tau\rangle|<1
$.

Since \(\tau\neq\sigma\), and \(x=y-\sigma\), we have
\[
\langle x,\tau\rangle
=
\langle y,\tau\rangle.
\]
Hence
$
|\langle y,\tau\rangle|<1
$.

Finally,
$
y=\frac{h}{\langle h,\sigma\rangle}
$, and
hence
\[
\langle y,\tau\rangle
=
\frac{\langle h,\tau\rangle}{\langle h,\sigma\rangle}.
\]
Thus
\[
\left|
\frac{\langle h,\tau\rangle}
     {\langle h,\sigma\rangle}
\right|
<1.
\]
Equivalently,
\[
|\langle h,\tau\rangle|
<
|\langle h,\sigma\rangle|.
\]
\end{enumerate}
The two cases exhaust all
$
\tau\in S_N(h)
$.

\item Case $\tau \in S_E(h)$. 
Suppose that $z \in Z_1(K)$. We claim that 
\begin{equation}
\label{eqn:proof:thm:simplified:0}
  \la z, \sigma\ra \neq 0  \Rightarrow \la z, \tau\ra \neq 0
\end{equation}
Indeed $\tau \perp W_1$ and $\la h, \tau \ra \neq 0$. Moreover,  
$Z_1(K) = \operatorname{span}(h) \oplus W_1$, and hence
$z = c h + w$ for some $c \in \mathbb{R}, w \in W_1$. Since $\sigma \perp W_1$,
$c \neq 0$. This implies that 
$\la z , \tau\ra = c \cdot \la h,\tau\ra \neq 0$ as well. 

Since $\operatorname{ker}(\ell_\sigma) = \operatorname{ker}(\ell_\tau) = W_1$,
and $W_1$ is of codimension $1$ in $Z_1(K)$, the linear functional 
\begin{equation}
\label{eqn:proof:thm:simplified}
\ell_\sigma = \lambda \ell_\tau,
\end{equation}
for some $\lambda \neq 0$. 
Since $\la h, \sigma \ra \neq 0$ and $h \in Z_1(K)$, Lemma~\ref{lem:bridge} implies that $\sigma$ is not a bridge in $G$. So there exists a simple cycle $z_1$
(i.e.
$z_1 \in Z_1(K)$ with all coefficients in $\{0,1,-1\}$), with $\sigma$ in its support. Then 
$\ell_\sigma(z_1) = \la z_1,\sigma\ra = \pm 1$, and hence $\ell_\tau(z_1) = \la z_1,\tau\ra  \neq 0$ using \eqref{eqn:proof:thm:simplified:0}. Therefore,
$\ell_\tau(z_1) \in \{1,-1\}$. 
Comparing with \eqref{eqn:proof:thm:simplified} we get that $\lambda \in \{1,-1\}$.
Evaluating $\ell_\sigma$ and $\ell_\tau$ at $h$, we obtain
$\la h, \tau\ra = \pm \la h, \sigma\ra$.
\end{enumerate}
Therefore the theorem is proved.
\end{proof}

\begin{proof}[Proof of Theorem~\ref{thm:main}]
There are two cases to consider.
\begin{enumerate}
\item
Case $p=0$. 
To see this suppose that 
$b = (s,t;1) \in \mathbf{B}_0(\mathcal{F})$ is a generic bar,
and $h = \sum_{\sigma \in K^{[0]}} c_\sigma \cdot \sigma$ is 
a harmonic representative of $\mathcal{P}^{s,t}_0$. 
\hide{
Each $0$-dimensional simplex (i.e. vertex) $\sigma$ belonging to  $\supp(h)$,
belongs to a connected component of $K_s$ disjoint from $K_{s-1}$.
To see this observe that $h \perp W_0 \supset Z_0(K_{s-1}) = C_0(K_{s-1})$
using Lemma~\ref{lem:orthogonality}. So $\supp(h) \cap K_{s-1}^{[0]} = \emptyset$. 
}

First note that $h \perp W_0$ (Lemma~\ref{lem:orthogonality}).
Each $0$-dimensional simplex (i.e. vertex) $\sigma$ belonging to  $\supp(h)$,
has the property that the corresponding $0$-cycle $\sigma \in Z_0(K_s)$ belongs to $\Rep(b)$.

To see this first observe that
$\la h, \sigma \ra \neq 0$ and $h \perp W_0$ together gives $\sigma \not\in W_0$.

Moreover, if $t \neq \infty$, $ \widetilde{M}_0^{s,t} = Z_0(K_s)$, and $\widetilde{M}_0^{s,t-1} = W_0$, and if $t = \infty$, $\widetilde{M}_0^{s,N} = W_0$ (see Lemma~\ref{lem:h}). 
In either case $\sigma \in \Rep(b) = Z_0(K_s) \setminus W_0$.

Thus, if $\supp(h)$ has more than one element, then the set $\Sigma(b) = \bigcap_{z \in \Rep(b)} \supp(z) \subset \bigcap_{\sigma \in \supp(h)} \{\sigma\}$ is empty, and there is nothing to prove. Otherwise, $\supp(h)$ has exactly one element,  and in this case the theorem is obviously true.
\item
Case $p=1$.
It follows from Theorem~\ref{thm:simplified}, 
 Proposition~\ref{prop:simplified} and Lemma~\ref{lem:simplified}, that
 for each $\sigma \in \Sigma(b) = S_E(h)$, and $\tau \in K_s^{[1]}, \tau \neq \sigma$,
 \begin{equation}
 \label{eqn:proof:thm:main}
   |\la h, \sigma\ra| \geq  |\la h, \tau \ra|  
 \end{equation} 
 with equality if and only if $\tau \in S_E(h)$. Notice that for $\tau \in K^{[1]} \setminus K_s^{[1]}$, $|\la h, \tau \ra| = 0$ and so inequality~\eqref{eqn:proof:thm:main} holds for all such $\tau$.  
\end{enumerate}
This completes the proof of Theorem~\ref{thm:main}.
\end{proof}

\section{Constructing a counterexample to the coefficient dominance conjecture in the general case.}
\label{sec:counterexample}
\subsection{Counter-example for $p=2$}
\label{subsec:counter:2}
The following construction provides a simplicial complex $J$ that contains the real projective plane $\mathbb{R}P^2$ as a subcomplex and 
has the same homology groups (over $\mathbb{Z}$) as the $2$-sphere $\mathbb{S}^2$.

\subsubsection{Triangulation of $\mathbb{R}P^2$}
We begin with a triangulation of the real projective plane $\mathbb{R}P^2$:

Let the vertex set be $V_0 = \{0, 1, 2, 3, 4, 5\}$.
The facets of $\mathbb{R}P^2$ are
   \[
   [0, 1, 2], [0, 2, 3], [0, 3, 4], [0, 4, 5], [0, 1, 5], [1, 2, 4], [1, 3, 4], [1, 3, 5], [2, 3, 5], [2, 4, 5].
   \]

\subsubsection{Möbius Strip Subcomplex}

The complex $\mathbb{R}P^2$ can be decomposed into a Möbius strip $M$ and a 
$2$-disk (the ``cap''). The Möbius strip subcomplex $M \subset \mathbb{R}P^2$ is formed by the facets that do not contain the vertex $0$. The facets of $M$ are
    $$[1, 2, 4], [1, 3, 4], [1, 3, 5], [2, 3, 5], [2, 4, 5].$$

The first homology group $\HH_1(M;\mathbb{Z}) \cong \mathbb{Z}$ is generated by the cycle $c = [1,3] + [3,5] - [2,5] + [2,4] - [1,4]$ crossing through the Möbius strip.

In $\mathbb{R}P^2$, the homology class $[c]$ generates the torsion subgroup $H_1(\mathbb{R}P^2;\mathbb{Z}) \cong \mathbb{Z}/2\mathbb{Z}$.

\subsubsection{Attaching a Disk to Kill Torsion}
To obtain a simplicial complex homologically equivalent to $\mathbb{S}^2$, we must kill the torsion in $\HH_1(\mathbb{R}P^2;\mathbb{Z})$. This is achieved by attaching a $2$-disk along the core cycle $c$.

\noindent We introduce a new vertex $6$ and triangulate the disk attached along $c$. Since $c$ is a cycle of length $5$, we require $5$ new triangles connecting vertex $6$ to the edges of $c$.

\noindent The new facets (triangles) are:
$$[1, 3, 6], [3, 5, 6], [2, 5, 6], [2, 4, 6], [1, 4, 6].$$

\subsubsection{The simplicial complex $J$}
The facets of $J$ are
$$[0, 1, 2], [0, 1, 5], [0, 2, 3], [0, 3, 4], [0, 4, 5], [1, 2, 4], [1, 3, 4],$$
$$[1, 3, 5], [1, 3, 6], [1, 4, 6], [2, 3, 5], [2, 4, 5], [2, 4, 6], [2, 5, 6],[3, 5, 6],$$
and note that we have obtained $J$ by only adding simplices to a triangulation of $\mathbb{R}P^2$.
\subsubsection*{$\HH_*(J;\mathbb{Z}) \cong \HH_*(\mathbb{S}^2;\mathbb{Z})$}
\begin{itemize}
    \item \textbf{$\HH_0(J;\mathbb{Z}) \cong \mathbb{Z}$:} $J$ is path-connected by construction.
    \item \textbf{$\HH_1(J;\mathbb{Z}) = 0$:} The torsion generator of $\mathbb{R}P^2$ was the core cycle $c$. By attaching the disk along $c$, we make $c$ a boundary, killing the $\mathbb{Z}/2\mathbb{Z}$ torsion subgroup. Thus $\HH_1(J;\mathbb{Z}) = 0$.
    \item \textbf{$\HH_2(J;\mathbb{Z}) \cong \mathbb{Z}$:} This follows directly from Euler characteristic being $2$ (by counting simplices). 
    Since $\chi(J) = 2$ and $\HH_1(J;\mathbb{Z}) = 0$, we must have $\HH_2(J;\mathbb{Z}) \cong \mathbb{Z}$.
\end{itemize}

Consequently, $J$ has the same homology groups as $\mathbb{S}^2$ while containing $\mathbb{R}P^2$ as a subcomplex.

Alternatively, we can check using a direct calculation that
\[
\card(J^{[0]}) =7,\qquad \card(J^{[1]}) =20,\qquad \card(J^{[2]})=15,
\]
and,
\[
\operatorname{rank}\partial_1=6,
\qquad
\operatorname{rank}\partial_2=14.
\]

Thus,

\[
\dim \HH_1(J)=20-6-14=0,
\]

and

\[
\dim \HH_2(J)=15-14=1.
\]

We will now add a tetrahedron, $[2,4,5,6]$, to the complex $J$ so that the boundary subspace (through Hodge decomposition) of $C_2$ of the resulting complex is non-trivial. Let us call the complex obtained by adding such simplex and all its missing facets as $\widetilde{J}_2$. The tetrahedron chosen is such that we do not introduce a new generator in homology (effectively just slightly thickening a section of our complex). 

We have
\[
\card(\widetilde{J}_2^{[0]})=7,\card(\widetilde{J}_2^{[1]})=20, \card(\widetilde{J}_2^{[2]})=16, \card(\widetilde{J}_2^{[3]})=1,
\]
and
\[
\operatorname{rank}\partial_2=14,
\operatorname{rank}\partial_3=1.
\]

Then 
\[
\dim \HH_2(\widetilde{J}_2)=(16-14)-1=1.
\]

An elementary computation yields a generator 
\[
\begin{split}
h= - 4 [0, 1, 2]+ 4 [0, 1, 5]- 4 [0, 2, 3]- 4 [0, 3, 4]- 4 [0, 4, 5]+ 4 [1, 2, 4]+ 4 [1, 3, 4]+ 4 [1, 3, 5]\\- 8 [1, 3, 6]+ 8 [1, 4, 6]+ 4 [2, 3, 5]- 1 [2, 4, 5]- 3 [2, 4, 6]+ 3 [2, 5, 6]- 8 [3, 5, 6]+ 5 [4, 5, 6].
\end{split}
\]

Note that
$
\partial_2 h=0
$,
and
$
h\perp B_2(\widetilde{J}_2)=\operatorname{Im}\partial_3
$.

Indeed, with

\[
T=[2,4,5,6],
\]

one has

\[
\partial T=[4,5,6]-[2,5,6]+[2,4,6]-[2,4,5].
\]

Using the coefficients of $h$,
\[
\langle h,\partial T\rangle= 5-3-3+1 = 0.
\]

Note that the projection of the simplex $\tau = [4,5,6]$ onto 
\[
B_2(\widetilde{J}_2)=\operatorname{Im}\partial_3(\widetilde{J}_2)\subset C_2(\widetilde{J}_2)
\]
is non-trivial (that is, $\tau \not\perp B$). We also have that $|\langle h, \tau\rangle| = 5 $, while for other simplices that are orthogonal to $B$ such as $\sigma = [0,1,2]$ we have  $|\langle h, \sigma \rangle| = 4$, a smaller coefficient than that of $\tau$. 
\hide{
Thus we have shown that the following is true:

\begin{theorem}\label{thm:base_case}
    There exists a complex $K_2$ of dimension $3$ such that
    \begin{enumerate}[label=(\alph*)]
        \item $\dim \HH_2(K_2) = 1$,
        \item For $h$ a harmonic cycle, there exist simplices $\sigma\perp B$ and $\tau\not\perp B$, where $B$ is the space of $2$-dimensional boundaries, such that $|\langle h, \sigma \rangle| < |\langle h,\tau \rangle|$.
    \end{enumerate}
\end{theorem}
Taking the two-step filtration \(\varnothing\subset K_2\), the unique nonzero
class in \(\HH_2(K_2)\) gives a simple generic infinite bar. The
displayed harmonic cycle is a harmonic representative of this bar.
}

We have the following theorem.
\begin{theorem}\label{thm:base_case}
    There exists a complex $\widetilde{J}_2$ of dimension $3$ such that
    \begin{enumerate}[label=(\alph*)]
        \item $\dim \HH_2(\widetilde{J}_2) = 1$,
        \item For the filtration $\FF$, $\emptyset = K_0  \subset K_1 = \widetilde{J}_2$, there exists a generic bar $b=(1,\infty;1) \in \mathbf{B}_2(\FF)$, with a harmonic representative $h$, and $\sigma \in \Sigma(b), \tau \in \widetilde{J}_2^{[2]} \setminus \Sigma(b)$ 
        such that $|\langle h, \sigma \rangle| < |\langle h,\tau \rangle|$.
    \end{enumerate}
\end{theorem}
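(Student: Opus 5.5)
We take the two-step filtration $\emptyset = K_0 \subset K_1 = \widetilde{J}_2$ and verify each claim of Theorem~\ref{thm:base_case} from the computations above. Part (a) is the rank computation already carried out: $\operatorname{rank}\partial_2 = 14$ and $\operatorname{rank}\partial_3 = 1$ with $16$ two-simplices give $\dim \HH_2(\widetilde{J}_2) = 1$.

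For part (b), we first check that $b=(1,\infty;1)$ is a bar and is generic. Since $K_0=\emptyset$, we have $\HH_2^{0,1}(\FF)=0$. Hence $M^{1,1}_2 = 0$ and $P^{1,\infty}_2 = \HH_2(K_1)$, which has dimension $1$. This is the only $2$-dimensional bar with birth time $1$, so both conditions of Definition~\ref{def:generic} hold. By Proposition~\ref{prop:simplified}, $\mathcal{P}^{1,\infty}_2(\FF) = \mathcal{H}_2(\widetilde{J}_2)$ is one-dimensional, since $\mathrm{Im}\,\mathfrak{i}^{0,1}_2 = 0$.

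We next show that the displayed $h$ spans this space. We verify $\partial_2 h = 0$ edge by edge, a routine check over the $20$ edges. We already have $\langle h, \partial T\rangle = 0$, so $h \perp B_2(\widetilde{J}_2)$ because $B_2 = \spn\{\partial T\}$. By Proposition~\ref{prop:intersection}, $h \in \mathcal{H}_2(\widetilde{J}_2)$, and $h \ne 0$, so $h$ is a harmonic representative of $b$.

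It remains to identify $\Sigma(b)$. By Lemma~\ref{lem:simplified}, $\Sigma(b) = S_E(h)$ with $K=\widetilde{J}_2$ and $K'=\emptyset$. Here $W_2 = B_2(\widetilde{J}_2) + Z_2(\emptyset) = \spn\{\partial T\}$. Lemma~\ref{lem:linear} then says that a $2$-simplex $\rho$ is essential if and only if $\langle h,\rho\rangle \neq 0$ and $\rho \perp \partial T$, that is, if and only if $\rho \notin \{[2,4,5],[2,4,6],[2,5,6],[4,5,6]\}$ and its $h$-coefficient is nonzero.

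We take $\sigma = [0,1,2]$. It is not a face of $T$ and $\langle h,\sigma\rangle = -4 \neq 0$, so $\sigma \in \Sigma(b)$. We take $\tau = [4,5,6]$. It appears in $\partial T$ with coefficient $1$, so $\tau \not\perp W_2$ and hence $\tau \notin \Sigma(b)$. Since $|\langle h,\tau\rangle| = 5 > 4 = |\langle h,\sigma\rangle|$, this gives the required pair.

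The only real obstacle is certifying that $\partial_2 h = 0$, since the rest is structural and follows from the lemmas already proved. We would do this by listing, for each edge, the incident triangles with their signs and summing the coefficients of $h$. For example, at edge $[0,1]$ the incident triangles are $[0,1,2]$ and $[0,1,5]$, whose contributions $-4+4$ cancel, and the remaining edges are handled the same way.
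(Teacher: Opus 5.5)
Your proposal is correct and follows the paper's proof. You show that the single infinite bar of $\emptyset\subset\widetilde{J}_2$ is generic and that $h$ is harmonic because $\partial_2 h=0$ and $h\perp B_2(\widetilde{J}_2)=W_2=\spn\{\partial T\}$; then Lemma~\ref{lem:linear} and Lemma~\ref{lem:simplified} give $[0,1,2]\in\Sigma(b)$ and $[4,5,6]\notin\Sigma(b)$ with $4<5$. The edge-by-edge check of $\partial_2 h=0$ that you defer does hold on all $20$ edges, so nothing is missing.
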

\begin{proof}
The unique nonzero
class in \(\HH_2(\widetilde{J}_2)\) gives a simple generic infinite bar. The
displayed harmonic cycle $h$ is a harmonic representative of this bar, since
$h \perp B_2(\widetilde{J}_2) =  W_2$ (recall definition of $W_2$ from Eqn. \eqref{eqn:W}).
Moreover, using Lemma~\ref{lem:linear},
$\sigma \in S_E(h)$, $\sigma \perp W_2 = B_2(\widetilde{J}_2)$ and $\la h, \sigma\ra \neq 0$, but $\tau \not\in S_E(h)$ since $\tau \not\perp W_2$.
But as shown above, 
\[
4 = |\langle h, \sigma \rangle| < |\langle h,\tau \rangle| = 5.
\]
Finally, using Lemma~\ref{lem:simplified}, we have $S_E(h) = \Sigma(b)$.
\end{proof}

\subsection{Counter-example for $p> 2$}
\label{subsec:counter:p}
\begin{definition}
    Let $A, B$ be simplicial complexes with disjoint set of vertices. The join $A*B$ is defined as the simplicial complex with simplices $\{a\cup b \mid a\in A, b\in B\}$. 
\end{definition}
Let $n\geq 3$. Let $L$ be a triangulation of $\mathbb{S}^{n-3}$. 

\begin{theorem}
\label{thm:counter-example}
    There exists a complex $\widetilde{J}_n$ of dimension $n+1$ such that
    \begin{enumerate}
        \item $\dim \HH_n(\widetilde{J}_n) = 1$,
        
        \item For the filtration $\FF$, $\emptyset = K_0  \subset K_1 = \widetilde{J}_n$, there exists a generic bar $b=(1,\infty;1) \in \mathbf{B}_n(\FF)$, with a harmonic representative $h$, and $\sigma \in \Sigma(b), \tau \in \widetilde{J}^{[n]}_n \setminus \Sigma(b)$ 
        such that $|\langle h, \sigma \rangle| < |\langle h,\tau \rangle|$.
        \hide{
        For a harmonic representative $h$,  there exist simplices $\sigma\perp B$ and $\tau\not\perp B$, where $B$ is the space of $n$-dimensional boundaries, such that $|\langle h, \sigma \rangle| < |\langle h,\tau \rangle|$.
        }
    \end{enumerate}
\end{theorem}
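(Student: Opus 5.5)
The natural approach is to take $\widetilde{J}_n = \widetilde{J}_2 * L$, the join of the complex from Theorem~\ref{thm:base_case} with a triangulation $L$ of $\mathbb{S}^{n-3}$. Since $n\ge 3$, $L$ has dimension $n-3$. Hence $\widetilde{J}_n$ has dimension $3+(n-3)+1=n+1$.

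For homology we use the join formula. Over $\R$, $\widetilde{\HH}_{k+1}(A*B)\cong\bigoplus_{i+j=k}\widetilde{\HH}_i(A)\otimes\widetilde{\HH}_j(B)$. Here $\widetilde{\HH}_*(\widetilde{J}_2)$ is concentrated in degree $2$ and is one-dimensional, and $\widetilde{\HH}_*(L)$ is concentrated in degree $n-3$ and is one-dimensional. Therefore the only nonzero reduced homology is $\HH_n(\widetilde{J}_n)\cong\R$, which gives (1). When $n=3$, $L$ is two points and the join is a suspension, and the formula still applies. Genericity of the bar $(1,\infty;1)$ for the two-step filtration $\emptyset\subset\widetilde{J}_n$ is immediate. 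We have $K'=\emptyset$, so $W_n=B_n(\widetilde{J}_n)$. By Lemma~\ref{lem:linear} and Lemma~\ref{lem:simplified}, $\Sigma(b)=S_E(h)=\{\rho:\la h,\rho\ra\neq0,\ \rho\perp B_n\}$.

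Next we construct the harmonic representative explicitly. Let $h_2$ be the cycle displayed in the $p=2$ example, and let $\ell\in Z_{n-3}(L)$ be a fundamental cycle with coefficients $\pm1$ on every top simplex. Put $h=h_2*\ell$, the chain join, with $\alpha*\beta=\alpha\cup\beta$ and the sign convention that makes $\partial(\alpha*\beta)=\partial\alpha*\beta+(-1)^{\dim\alpha+1}\alpha*\partial\beta$ on reduced chains. Then $h$ is a cycle. Every $n$-simplex of $\widetilde{J}_n$ is $\alpha\cup\beta$ with $\dim\alpha+\dim\beta=n-1$. In the support of $h$ we have $\alpha$ a triangle and $\beta$ a top simplex of $L$, and the coefficient there is $\pm\la h_2,\alpha\ra$. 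So $|\la h,\sigma*\beta\ra|=4$ for $\sigma=[0,1,2]$ and $|\la h,\tau*\beta\ra|=5$ for $\tau=[4,5,6]$.

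It then remains to check two things.

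First, $h\perp B_n$. The $(n+1)$-simplices are joins $\gamma*\beta$ with $(\dim\gamma,\dim\beta)$ equal to $(3,n-3)$ or $(2,n-2)$. The second type is impossible since $\dim L=n-3$; the join grading uses $\dim\emptyset=-1$, but $\gamma$ of dimension $4$ does not exist either. So the only $(n+1)$-simplices are $T*\beta$, where $T=[2,4,5,6]$. The boundary $\partial(T*\beta)$ consists of $\partial T*\beta$ plus terms $T*\partial\beta$. The terms $T*\partial\beta$ lie outside $\supp(h)$ because they contain a tetrahedron. So $\la h,\partial(T*\beta)\ra=\pm\la h_2,\partial T\ra=0$.

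Second, we must check essentiality. Every $n$-coface of $\sigma*\beta$ with $\sigma=[0,1,2]$ has the form $T*\beta$ with $T\supset\sigma$, and $[0,1,2]$ lies in no tetrahedron. Hence $\sigma*\beta\perp B_n$, and since its coefficient is $4\neq0$, it lies in $\Sigma(b)$. By contrast, $\tau*\beta$ appears in $\partial(T*\beta)$ with coefficient $\pm1$, so $\tau*\beta\not\perp B_n$ and it is not essential, yet its coefficient is $5>4$.

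The main obstacle is the sign bookkeeping in the chain-level join. We need $h$ to be a cycle and the join of $\partial T$ with $\beta$ to appear exactly as $\pm\partial T*\beta$. Once a consistent sign convention for the join is fixed, both facts follow from the Leibniz rule, because the cross terms vanish: $\partial\ell=0$ in reduced chains (including the augmentation case $n=3$), and $\partial h_2=0$.
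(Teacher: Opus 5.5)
Your proposal is correct and follows essentially the same route as the paper. Both take the join $\widetilde{J}_2 * L$ with a triangulated $(n-3)$-sphere and use the join K\"unneth formula to get $\dim \HH_n = 1$. Both then use the chain-level join $h = h_2 * \ell$, which is a cycle by the Leibniz rule and is orthogonal to $B_n$ because the only $(n+1)$-simplices are $T*\beta$. Finally, both compare $\sigma*\beta$ with $\tau*\beta$ via Lemmas~\ref{lem:linear} and~\ref{lem:simplified}.

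Your explicit handling of the sign convention and of the augmented case $n=3$ is slightly more careful than the paper's write-up; in that case $T*\partial\beta = \pm T$ is itself an $n$-simplex, but it still lies outside $\supp(h)$. One small slip: the cofaces of $\sigma*\beta$ you need to rule out are $(n+1)$-simplices, not $n$-simplices.
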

\begin{proof}
    Let us define $\widetilde{J}_n = \widetilde{J}_2* L$. 
    The Künneth formula for joins states that
    \[
\widetilde \HH_m(A*B)
\cong
\bigoplus_{r+s=m-1}
\widetilde \HH_r(A)\otimes
\widetilde \HH_s(B)
\]
(where $\widetilde \HH_*(\cdot)$ denotes reduced homology).
Applying it with

\[
A=\widetilde{J}_2,\qquad B=L\simeq S^{n-3}.
\]
we obtain that 
$\dim \HH_n(\widetilde{J}_n) =1 $.

    Let $h'$ be the harmonic cycle in Theorem \ref{thm:base_case}. Let $\omega$ be the fundamental class of $L$. Define 
    \[
    h = \sum_{\substack{
    \alpha\in \widetilde{J}_2^{[2]}\\ \beta\in L^{[n-3]}
    }} \langle h',\alpha \rangle \langle \omega,\beta \rangle \alpha * \beta.
    \]
And note that for all simplices $\eta = \alpha * \beta$ we have $\langle h, \eta\rangle =  \langle h',\alpha \rangle \langle \omega,\beta \rangle$.
To see that $h$ is a harmonic cycle in $\widetilde{J}_n$ we have to show that it is both a cycle and a cocycle.
\begin{itemize}
    \item $h$ is a cycle:
    Recall that the action of the boundary on a simplex of the join is given by
    \[
    \partial (\alpha * \beta)=(\partial\alpha)*\beta +(-1)^{\dim(\alpha) +1}\alpha*(\partial \beta).
    \]
    Thus
    \begin{align*}
    \partial h &= \partial \sum_{\alpha,\beta}  \langle h',\alpha \rangle \langle \omega,\beta \rangle \alpha * \beta\\
    &= \sum_{\alpha,\beta}  \langle h',\alpha \rangle \langle \omega,\beta \rangle \partial ( \alpha * \beta)\\
    &= \sum_{\alpha,\beta}  \langle h',\alpha \rangle \langle \omega,\beta \rangle \left((\partial\alpha)*\beta +(-1)^{\dim(\alpha) +1}\alpha*(\partial \beta)\right)\\
    &= \sum_{\alpha,\beta}  \langle h',\alpha \rangle \langle \omega,\beta \rangle \left((\partial\alpha)*\beta -\alpha*(\partial \beta)\right)\\
     &= \sum_{\alpha,\beta}  \langle h',\alpha \rangle (\partial\alpha) * \langle \omega,\beta \rangle \beta - \langle h',\alpha \rangle \alpha*\langle \omega,\beta \rangle (\partial \beta)\\
     &=\partial h'* \omega - h' * \partial \omega = 0
\end{align*}
    \item $h$ is a cocycle: 
    By definition, 
    $dh=0$ if $\langle dh, \eta\rangle=0$  for all $n+1$ (top dimensional) simplices $\eta$, where $d = \partial^*$. 
In other words,
\[
h\perp B_n(\widetilde{J}_n),
\]
i.e.
\[
\langle h,\partial \eta\rangle=0
\]
for every \((n+1)\)-simplex \(\eta\).
    By construction, the top simplices of $\widetilde{J}_n$ are joins of the only tetrahedron $T$ of $\widetilde{J}_2$ with top simplices of $L$. For a given $\beta$ top dimensional, their boundary evaluates to
    \[
    \partial (T*\beta) = \partial(T)*\beta + T*\partial \beta
    \] 
    We have 
    \begin{align*}
        \langle dh, \eta\rangle &= \langle dh, T*\beta\rangle = \langle h, \partial(T*\beta)\rangle = \langle h,  \partial(T)*\beta + T*\partial \beta \rangle 
    \end{align*}
    By dimensionality, $\langle h, T*\partial \beta \rangle$ is zero, therefore  
    \begin{align*}
        \langle dh, \eta\rangle &= \langle h,  \partial(T)*\beta \rangle \\
        &=\left\langle \sum_{\alpha,\beta'}  \langle h',\alpha \rangle \langle \omega,\beta' \rangle \alpha * \beta',\,  \partial(T)*\beta \right\rangle\\
        &= \sum_{\alpha}  \langle h',\alpha \rangle \langle \omega,\beta \rangle \left\langle\alpha * \beta,\,  \partial(T)*\beta \right\rangle\\
        &= \langle \omega,\beta \rangle  \sum_{\alpha\subseteq \partial T}  \langle h',\alpha \rangle \left\langle \alpha * \beta,\,  \partial(T)*\beta \right\rangle
    \end{align*}

    Since $\partial T = \sum_{\alpha'\subseteq \partial T} \langle \partial T, \alpha'\rangle \alpha'$
\begin{align*}
        \langle dh, \eta\rangle &= \langle \omega,\beta \rangle  \sum_{\alpha\subseteq \partial T}  \langle h',\alpha \rangle \left\langle \alpha * \beta,\,  \sum_{\alpha'\subseteq \partial T} \langle \partial T, \alpha'\rangle \alpha'*\beta \right\rangle\\
        &= \langle \omega,\beta \rangle  \sum_{\alpha\subseteq \partial T}  \langle h',\alpha \rangle \langle \partial T, \alpha \rangle =0.
\end{align*}
\end{itemize}

Now consider the simplices $\sigma = [0,1,2]$ and $\tau = [4,5,6]$ in $\widetilde{J}_2$, and a top-dimensional simplex $\beta_0$ of $L$, noting that $\la \omega,\beta_0\ra = \pm 1$. We have that $\tau * \beta_0$ is not orthogonal to boundaries and $\sigma *\beta_0$ is, as well as 
\[
|\langle h, \tau*\beta_0 \rangle| = 5 > 4 = |\langle h, \sigma*\beta_0 \rangle|.
\]
The rest of the proof is identical to that of Theorem~\ref{thm:base_case}.
\end{proof}

\section{Genericity is required}
\label{sec:appendix}
Recall that the genericity property has two parts which we recall below:

\begin{itemize}
\item Property \eqref{itemlabel:generic:1}: the bar is simple, i.e.
\[
\mu_p^{s,t}(F)=1;
\]
\item Property \eqref{itemlabel:generic:2}: 
no other bar has the same birth time \(s\).
\end{itemize}

\subsection{If Property \eqref{itemlabel:generic:1} does not hold}
The simplicity assumption is necessary for the statement to be non-vacuous. Indeed, if $b=(s,t;m)$ has multiplicity $m>1$, then, under the natural extension of the definition of representatives, 
$\Sigma(b) = \emptyset$.
Moreover, if one tries to choose an arbitrary harmonic representative inside the $m$-dimensional harmonic bar subspace and define essential simplices relative to that choice, coefficient dominance can fail.

\hide{

The coefficient-dominance theorem is stated for simple bars, because only then
the harmonic barcode subspace attached to the bar is one-dimensional and hence
is spanned by a single harmonic representative, unique up to nonzero scalar.

If Property \eqref{itemlabel:generic:1} is omitted, then the theorem is not literally well-defined in
the same form: a bar of multiplicity \(m>1\) has a harmonic barcode subspace of
dimension \(m\), not a distinguished harmonic line.

There are two useful facts.

First, under the natural extension of the definition of representatives to a
bar of multiplicity \(m>1\), the set of essential simplices is empty. Thus the
coefficient-dominance statement becomes vacuous.

Second, if one tries to choose an arbitrary nonzero harmonic vector in the
higher-dimensional harmonic bar subspace and define essential simplices relative
to that chosen vector, then coefficient dominance can fail.
}
We explain both points below.
\subsubsection{Multiplicity greater than one forces the essential set to be empty}

Let \(b=(s,t;m)\) be a bar of multiplicity
$m>1$.
\hide{
Let \(M\) be the corresponding representative vector space and let
\(N\subset M\) be the previous, or null, subspace. Thus
\[
\dim(M/N)=m>1.
\]
}

If $t \neq \infty$, let
\[
M=\widetilde M_p^{s,t}(F),
\qquad
N=\widetilde M_p^{s,t-1}(F),
\]
and 
if $t = \infty$ let
\[
M=Z_p(K_s),
\qquad
N=\widetilde M_p^{s,N}(F).
\]

Then,
$
\Rep(b)=M\setminus N.
$
(see Definition~\ref{def:rep}).

\begin{lemma}
If \(\dim(M/N)>1\), then
\[
\Sigma(b)=\varnothing.
\]
\end{lemma}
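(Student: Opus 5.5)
Fix a simplex $\sigma \in K_s^{[p]}$; we want some $z \in M\setminus N$ with $\la z,\sigma\ra = 0$, which shows $\sigma \notin \Sigma(b)$. We use the linear functional $\ell_\sigma : M \to \R$, $\ell_\sigma(z) = \la z,\sigma\ra$, and consider its kernel $\ker \ell_\sigma \cap M$. This is a subspace of $M$ of codimension at most one.

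\textbf{Main step: a dimension count.} Suppose, for contradiction, that $\ker\ell_\sigma \cap M \subseteq N$. Then
\[
\dim N \ge \dim(\ker \ell_\sigma\cap M) \ge \dim M - 1,
\]
so $\dim(M/N) \le 1$. This contradicts the hypothesis $\dim(M/N) > 1$. Hence $\ker\ell_\sigma\cap M \not\subseteq N$, and we can pick $z \in (\ker\ell_\sigma \cap M)\setminus N$.

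\textbf{Conclusion.} The chosen $z$ lies in $M\setminus N = \Rep(b)$ and satisfies $\sigma \notin \supp(z)$. Therefore $\sigma \notin \bigcap_{z\in\Rep(b)}\supp(z) = \Sigma(b)$. Since $\sigma$ was arbitrary, $\Sigma(b)=\varnothing$.

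Two details need checking. First, simplices outside $K_s^{[p]}$ never appear in supports of elements of $Z_p(K_s)$, so restricting to $\sigma \in K_s^{[p]}$ is harmless. Second, the identity $\Rep(b) = M\setminus N$ is exactly Definition~\ref{def:rep}, under the natural extension to multiplicity $m$ that the paper adopts here.

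There is no real obstacle: the argument is a single codimension estimate. The only care needed is to phrase it as "a hyperplane (or all of $M$) cannot be contained in a subspace of codimension $\ge 2$," which avoids any case split on whether $\ell_\sigma$ vanishes identically on $M$.
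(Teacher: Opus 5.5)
Your proof is correct and essentially matches the paper's: both use the coordinate functional $\ell_\sigma$ on $M$ and a codimension count to show its kernel cannot lie inside $N$, which produces a representative with zero $\sigma$-coefficient. The only difference is that you absorb the case $\ell_\sigma|_M \equiv 0$ into the same dimension estimate, whereas the paper treats that case separately.
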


\begin{proof}
Let \(\sigma\) be any \(p\)-simplex. Define the coordinate functional
\[
\ell_\sigma:M\to \mathbb{R},
\ell_\sigma(z)=\langle z,\sigma\rangle.
\]

If
$
\ell_\sigma\equiv 0
\quad\text{on }M,
$
then every representative has zero \(\sigma\)-coefficient, so \(\sigma\notin
\Sigma(b)\).

Now suppose that
$\ell_\sigma$
is not identically zero on \(M\). 
Then
$
\ker(\ell_\sigma\restriction_M)
$
is a hyperplane in \(M\), so
$
\dim\ker(\ell_\sigma|_M)=\dim M-1.
$
Since
$
\dim(M/N)=m>1,
$
we have
$
\dim N=\dim M-m\leq \dim M-2
$.
Therefore
$
\ker(\ell_\sigma|_M)\not\subseteq N
$.
Hence there exists
$
z\in \ker(\ell_\sigma|_M)\setminus N
$.

This \(z\) lies in
$
M\setminus N=\Rep(b)
$,
but
$
\langle z,\sigma\rangle=0
$.
Thus
$
\sigma\notin\Sigma(b)
$.

Since \(\sigma\) was arbitrary,
\[
\Sigma(b)=\varnothing.
\]
\end{proof}

\begin{remark}
This shows that, without Property \eqref{itemlabel:generic:1}, the usual essential-simplex definition
does not give a nontrivial coefficient-dominance statement. For bars of
multiplicity greater than one, the intersection of supports over all
representatives is empty.
\end{remark}

\subsubsection{A concrete non-simple bar}

We now give a small explicit example showing that a naive \(h\)-level extension
of coefficient dominance fails.

Let \(K_0\) consist of the six vertices
\[
0,1,2,3,4,5.
\]

Let \(K_1=K\) have edges
\[
[01],[02],[12],[03],[13],[04],[45],[05],
\]
and one \(2\)-simplex
\[
[012].
\]

Orient every edge by increasing vertex order.

Consider the two-step filtration
\[
K_0\subset K_1=K.
\]

The graph \(K^{(1)}\) is connected with
$6$
vertices and $8$
edges. Hence its cycle rank is
$
8-6+1=3.
$

The single triangle contributes one independent boundary,
\[
\partial[012]=[12]-[02]+[01].
\]
Therefore
\[
\dim \HH_1(K)=3-1=2.
\]

Since
$
\HH_1(K_0)=0
$,
the barcode in degree \(1\) has one infinite bar of multiplicity \(2\):
$
b=(1,\infty;2).
$

This bar satisfies the analogue of Property \eqref{itemlabel:generic:2}, because no other bar has birth
time \(1\). But it fails Property \eqref{itemlabel:generic:1}, because
$
\mu_1^{1,\infty}(F)=2.
$

\subsubsection{The harmonic subspace}

Define
\[
h_1=-2[01]-[02]+[12]+3[03]-3[13],
\]
and
\[
h_2=[04]+[45]-[05].
\]

We check that both are harmonic \(1\)-cycles in \(K\).

First,
\[
\partial h_1=0.
\]
Indeed, using \(\partial[ij]=[j]-[i]\), the vertex coefficients cancel:
\[
\begin{aligned}
\partial h_1
={}&-2([1]-[0])-([2]-[0])+([2]-[1])\\
&+3([3]-[0])-3([3]-[1])\\
={}&0.
\end{aligned}
\]

Also,
\[
\partial h_2
=
([4]-[0])+([5]-[4])-([5]-[0])
=
0.
\]

Thus
\[
h_1,h_2\in Z_1(K).
\]

Next,
\[
B_1(K)=\mathrm{span}\{\partial[012]\}.
\]
Since
\[
\partial[012]=[12]-[02]+[01],
\]
we have
\[
\langle h_1,\partial[012]\rangle
=
1-(-1)-2
=
0.
\]
Also,
\[
\langle h_2,\partial[012]\rangle=0,
\]
because \(h_2\) is supported on the edges
\[
[04],[45],[05],
\]
which do not occur in \(\partial[012]\).
Therefore
$
h_1,h_2\perp B_1(K)
$,
and so
$
h_1,h_2\in \mathcal{H}_1(K)
$.

Since $
\dim H_1(K)=2
$,
the harmonic bar subspace attached to
$
b=(1,\infty;2)
$
is equal to
$
\mathrm{span}\{h_1,h_2\}
$.

\subsubsection{Failure of a naive \(h\)-level extension}
Although the bar itself has no essential simplices under the natural definition,
one might try the following alternative construction. Choose a nonzero
harmonic vector
$
h\in\mathrm{span}\{h_1,h_2\}
$
and define essential simplices relative to the chosen vector \(h\), rather than
relative to the full multiplicity-two bar.

We show that coefficient dominance fails for this naive extension.

Let
$
0<\varepsilon<2
$,
and set
$
h=h_1+\varepsilon h_2
$.
Explicitly,
\[
h
=
-2[01]-[02]+[12]+3[03]-3[13]
+\varepsilon[04]+\varepsilon[45]-\varepsilon[05],
\]

and
\[
B_1(K)=\mathrm{span}\{[12]-[02]+[01]\}.
\]
Consider the edge
$
\sigma=[04]
$.
This edge does not occur in \(\partial[012]\), so
$
\sigma\perp B_1(K)
$.

Also,
\[
\langle h,\sigma\rangle
=
\langle h,[04]\rangle
=
\varepsilon
\neq 0.
\]
Thus \([04]\) is essential relative to the chosen harmonic vector \(h\).

Now consider the edge
$
\tau=[01]
$.
Since
$
\langle [01],\partial[012]\rangle=1
$,
we have
$
\tau\not\perp B_1(K)
$.
Thus \([01]\) is non-essential relative to \(h\).

But
$
\langle h,[01]\rangle=-2
$.
Therefore
\[
|\langle h,\sigma\rangle|
=
|\langle h,[04]\rangle|
=
\varepsilon,
\]
whereas
\[
|\langle h,\tau\rangle|
=
|\langle h,[01]\rangle|
=
2.
\]

For any
$
0<\varepsilon<2
$,
we get
$
|\langle h,\tau\rangle|
>
|\langle h,\sigma\rangle|
$.

Thus coefficient dominance fails for this naive choice of a harmonic vector
inside the multiplicity-two harmonic bar subspace.

\hide{
\subsubsection{Conclusion}

Removing Property (A) has two consequences.

First, the coefficient-dominance theorem is no longer literally well-defined
as stated, because the harmonic object attached to a multiplicity-\(m\) bar is
an \(m\)-dimensional subspace, not a harmonic line.

Second, under the natural extension of the representative-set definition, if
\[
m>1,
\]
then
\[
\Sigma(b)=\varnothing.
\]
Thus the coefficient-dominance statement becomes vacuous.

Finally, if one instead chooses an arbitrary nonzero harmonic representative
inside the higher-dimensional harmonic bar subspace and defines essential
simplices relative to that choice, coefficient dominance can fail. The explicit
example above gives a bar
\[
b=(1,\infty,2)
\]
satisfying the analogue of Property (B) but not Property (A), and a chosen
harmonic vector
\[
h=h_1+\varepsilon h_2
\]
for which the essential edge
\[
\sigma=[04]
\]
has coefficient magnitude
\[
\varepsilon,
\]
while the non-essential edge
\[
\tau=[01]
\]
has coefficient magnitude
\[
2.
\]
For \(0<\varepsilon<2\),
\[
|\langle h,\tau\rangle|>|\langle h,\sigma\rangle|.
\]
}
Therefore the simplicity assumption is necessary for the coefficient-dominance
theorem to have a non-vacuous, canonical formulation.

\subsection{If Property \eqref{itemlabel:generic:2} does not hold}
In this case, 
we construct a filtration and a simple one-dimensional bar
\[
b=(1,3;1)
\]
such that another bar has the same birth time. Thus \(b\) satisfies 
Property~\eqref{itemlabel:generic:1} but fails Property~\eqref{itemlabel:generic:2}.
For this non-generic simple bar, we show that the harmonic representative
has a non-essential edge with coefficient larger in absolute value than an
essential edge.


Let
\[
K_0\subset K_1\subset K_2\subset K_3
\]
be the following filtration.

Let
\[
K_0=\{[0],[1],[2],[3],[4]\}
\]
be the complex consisting of five isolated vertices.

Let \(K_1\) be obtained from \(K_0\) by adding following seven edges
\[
[01],\ [02],\ [04],\ [13],\ [14],\ [23],\ [24].
\]

Thus \(K_1\) is a connected graph with
$5$ vertices and $7$ edges.
Therefore
\[
\dim H_1(K_1)=7-5+1=3.
\]

Define two cycles in \(K_1\):
\[
n=[02]+[23]-[13]+[14]-[04],
\]
and
\[
d=[01]+[13]-[23]+[24]-[04].
\]

The cycle \(n\) is the oriented cycle
\[
0\to 2\to 3\to 1\to 4\to 0,
\]
and the cycle \(d\) is the oriented cycle
\[
0\to 1\to 3\to 2\to 4\to 0.
\]

Now define \(K_2\) by attaching a cone over the cycle \(n\), with new vertex
\(5\). Concretely, add the edges
\[
[05],\ [15],\ [25],\ [35],\ [45]
\]
and the triangles
\[
[025],\ [235],\ [135],\ [145],\ [045].
\]

Let
\[
D_n=[025]+[235]-[135]+[145]-[045].
\]
Using
\[
\partial[abc]=[bc]-[ac]+[ab]
\]
for \(a<b<c\), one checks that
\[
\partial D_n
=
[02]+[23]-[13]+[14]-[04]
=
n.
\]
Thus the passage from \(K_1\) to \(K_2\) kills the class represented by \(n\).

Finally, define \(K_3\) by attaching a cone over the cycle \(d\), with new
vertex \(6\). Concretely, add the edges
\[
[06],\ [16],\ [26],\ [36],\ [46]
\]
and the triangles
\[
[016],\ [136],\ [236],\ [246],\ [046].
\]

Let
\[
D_d=[016]+[136]-[236]+[246]-[046].
\]
Then
\[
\partial D_d
=
[01]+[13]-[23]+[24]-[04]
=
d.
\]
Thus the passage from \(K_2\) to \(K_3\) kills the class represented by \(d\),
modulo the class already killed by \(n\).


At time \(1\), three independent one-dimensional homology classes are born,
because
\[
\dim H_1(K_1)=3.
\]

At time \(2\), the class represented by \(n\) dies. At time \(3\), the class
represented by \(d\) dies modulo the class already killed by \(n\). One
remaining independent class persists.

Thus the one-dimensional barcode contains
\[
(1,2;1),
(1,3;1),
(1,\infty;1).
\]

Let $b=(1,3;1)$.
The bar $b$ is simple. However, it is not generic, because the distinct bar
$
(1,2;1)
$
has the same birth time \(1\). Thus \(b\) 
fails 
Property~\eqref{itemlabel:generic:2}.


At time \(1\), there are no \(2\)-simplices, so
\[
B_1(K_1)=0.
\]
Consequently every cycle in \(K_1\) is harmonic as a cycle in \(K_1\).

The class killed at time \(2\) is represented by \(n\). Hence
\[
\widetilde{M}_1^{1,2}=\mathrm{span}\{n\}.
\]

The classes killed by time \(3\) are represented by \(n\) and \(d\). Hence
\[
\widetilde{M}_1^{1,3}=\mathrm{span}\{n,d\}.
\]

For the bar \(b=(1,3;1)\), the representative cycles are the elements of
$
\widetilde{M}_1^{1,3}\setminus \widetilde{M}_1^{1,2}
$.
Equivalently,
\[
\Rep(b)
=
\{a d+b n : a\in\mathbb{R}^\times,\ b\in\mathbb{R}\}.
\]

Indeed, the coefficient of \(d\) must be nonzero in order to represent a
nonzero element of
$
M_1^{1,3}/M_1^{1,2}
$


Since \(K_1\) has no \(2\)-simplices, the harmonic representative of \(b\) is
the vector in
$
\mathrm{span}\{n,d\}
$
orthogonal to
$
\mathrm{span}\{n\}
$.

We have
\[
\langle d,n\rangle=-1,
\|n\|^2=5.
\]
Therefore the vector in \(\mathrm{span}\{n,d\}\) orthogonal to \(n\) is
\[
h
=
d-\frac{\langle d,n\rangle}{\|n\|^2}n
=
d+\frac15 n.
\]

Substituting the formulas for \(d\) and \(n\), we get
\[
\begin{aligned}
h
={}&
[01]
+\frac15[02]
-\frac65[04]
+\frac45[13]
+\frac15[14] \\
&-\frac45[23]
+[24].
\end{aligned}
\]

Thus
\[
\langle h,[01]\rangle=1,
\langle h,[24]\rangle=1,
\langle h,[04]\rangle=-\frac65.
\]


We now compute the essential edges of the bar \(b\).
For a general representative
$
a d+b n\in \Rep(b),
 a\neq 0,
$
the edge coefficients are
\[
\begin{array}{c|c}
\text{edge} & \text{coefficient in } a d+b n\\
\hline
[01] & a \\ 

[24] & a \\ 

[04] & -a-b\\

[13] & a-b \\ 

[23] & -a+b  \\ 

[02] & b\\ 

[14] & b
\end{array}
\]

Since \(a\neq 0\), the coefficients of \([01]\) and \([24]\) are nonzero in
every representative. Therefore
\[
[01],[24]\in \Sigma(b).
\]

On the other hand, the coefficient of \([04]\) is
$
-a-b
$,
which can be made zero by choosing
$
b=-a
$.
Hence
\[
[04]\notin \Sigma(b).
\]

Thus
\[
\sigma=[01]\in \Sigma(b),
\tau=[04]\notin \Sigma(b).
\]

For the harmonic representative
$
h=d+\frac15 n,
$
we have
\[
|\langle h,\sigma\rangle|
=
|\langle h,[01]\rangle|
=
1,
\]
whereas
\[
|\langle h,\tau\rangle|
=
|\langle h,[04]\rangle|
=
\frac65.
\]

Therefore
\[
|\langle h,\tau\rangle|
>
|\langle h,\sigma\rangle|.
\]

This contradicts the coefficient-dominance conclusion
$
|\langle h,\tau\rangle|
<
|\langle h,\sigma\rangle|
$
with
$
\sigma\in\Sigma(b),
\tau\notin\Sigma(b).
$

\hide{
The bar
\[
b=(1,3;1)
\]
is simple but not generic, because another bar with the same birth time is
present:
\[
(1,2;1).
\]

For this non-generic simple bar, the harmonic representative is
\[
h
=
[01]
+\frac15[02]
-\frac65[04]
+\frac45[13]
+\frac15[14]
-\frac45[23]
+[24],
\]
and the essential edge
\[
\sigma=[01]
\]
has coefficient magnitude \(1\), while the non-essential edge
\[
\tau=[04]
\]
has coefficient magnitude \(6/5\). Thus
\[
|\langle h,\tau\rangle|>|\langle h,\sigma\rangle|.
\]

Therefore the genericity condition that no other bar has the same birth time
cannot be omitted from the coefficient-dominance theorem.
}

\section{Conclusion and future work}
\label{sec:conclusion}
In this paper we have proved that, for generic one-dimensional bars belonging to the barcode of a finite filtration, every essential edge has strictly larger coefficient, in absolute value, than every non-essential edge in the harmonic representative, and the absolute values of the coefficients of all essential edges are equal. We have also shown that the corresponding statement is not true in general for higher dimensional bars by giving a
sequence of counter-examples. However, our counter-examples are quite special. It would be interesting to investigate under what conditions on the filtration the coefficient dominance theorem holds for higher dimensional homology as well. Another open problem is to investigate the stability of the inequality
in Theorem~\ref{thm:main} under small perturbations of the filtration (along the same lines that stability has been investigated in the context of harmonic persistent homology \cite{Basu-Cox,gulen2025grassmannianpersistencediagramsspecial}).

The coefficient dominance of the essential simplices in harmonic representatives (in dimension one) indicates that the harmonic weights carry important information about the underlying filtration. It would be interesting to exploit this further in practical applications and empirically check the validity of this observation. One first step in this direction was taken in \cite{Gurnari-et-al}. 

\section*{Acknowledgements}
We acknowledge the use of the AI programs \href{https://chatgpt.com}{Chatgpt} and 
\href{https://claude.ai}{Claude}
for assistance with exploring counterexamples during the brainstorming phase, as well as for reviewing the manuscript and offering suggestions during the paper-writing phase. All mathematical proofs, derivations, and final verifications were performed and checked independently by the authors.

\bibliographystyle{amsplain}
\bibliography{bibliography}

@article {Basu-Cox,
    AUTHOR = {Basu, Saugata and Cox, Nathanael},
     TITLE = {Harmonic persistent homology},
   JOURNAL = {SIAM J. Appl. Algebra Geom.},
  FJOURNAL = {SIAM Journal on Applied Algebra and Geometry},
    VOLUME = {8},
      YEAR = {2024},
    NUMBER = {1},
     PAGES = {189--224},
      ISSN = {2470-6566},
   MRCLASS = {55N31},
  MRNUMBER = {4722356},
MRREVIEWER = {Facundo\ M\'emoli},
       DOI = {10.1137/22M1518761},
       URL = {https://doi.org/10.1137/22M1518761},
note={(an extended abstract appears in the
Proceedings of FOCS, 2021)},
}

@article{Eckmann,
	Author = {Beno Eckmann},
	Journal = {Commentarii Mathematici Helvetici},
	Number = {1},
	Pages = {240-255},
	Title = {Harmonische Funktionen und Randwertaufgaben in einem Komplex},
	Volume = {17},
	Year = {1944}}

@inproceedings{essential,
	Author = {Saugata Basu and Filippo Utro and Laxmi Parida},
	Booktitle = {18th International Workshop on Algorithms in Bioinformatics},
	Editor = {Laxmi Parida and Esko Ukkonen},
	Pages = {14:1-14:10},
	Publisher = {Schloss Dagstuhl},
	Series = {Leibniz International Proceedings in Informatics},
	Title = {Essential Simplices in Persistent Homology and Subtle Admixture Detection},
	Volume = {113},
	Year = {2018}}

@article{volume,
	Author = {Ippei Obayashi},
	Journal = {SIAM Journal on Applied Algebra and Geometry},
	Number = {4},
	Pages = {508-534},
	Title = {Volume-Optimal Cycle: TIght Representative Cycle of a Generator in Persistent Homology},
	Volume = {2},
	Year = {2018}}

@incollection {T,
    AUTHOR = {Thom, R.},
     TITLE = {Sur l'homologie des vari\'et\'es alg\'ebriques r\'eelles},
 BOOKTITLE = {Differential and Combinatorial Topology (A Symposium in Honor
              of Marston Morse)},
     PAGES = {255--265},
 PUBLISHER = {Princeton Univ. Press},
   ADDRESS = {Princeton, N.J.},
      YEAR = {1965},
   MRCLASS = {57.31 (57.50)},
  MRNUMBER = {0200942 (34 \#828)},
MRREVIEWER = {R. Bott},
}

@article {Lim2020,
    AUTHOR = {Lim, Lek-Heng},
     TITLE = {Hodge {L}aplacians on graphs},
   JOURNAL = {SIAM Rev.},
  FJOURNAL = {SIAM Review},
    VOLUME = {62},
      YEAR = {2020},
    NUMBER = {3},
     PAGES = {685--715},
      ISSN = {0036-1445},
   MRCLASS = {58A14 (05C50 20G10)},
  MRNUMBER = {4131346},
       DOI = {10.1137/18M1223101},
       URL = {https://doi-org.ezproxy.lib.purdue.edu/10.1137/18M1223101},
}

@article {Basu-Karisani,
    AUTHOR = {Basu, Saugata and Karisani, Negin},
     TITLE = {Persistent homology of semialgebraic sets},
   JOURNAL = {SIAM J. Appl. Algebra Geom.},
  FJOURNAL = {SIAM Journal on Applied Algebra and Geometry},
    VOLUME = {7},
      YEAR = {2023},
    NUMBER = {3},
     PAGES = {651--684},
      ISSN = {2470-6566},
   MRCLASS = {55N31 (14P10 14Q30 68U03)},
  MRNUMBER = {4646856},
       DOI = {10.1137/22M1494415},
       URL = {https://doi.org/10.1137/22M1494415},
}

@book {Edelsbrunner-Harer2010,
    AUTHOR = {Edelsbrunner, Herbert and Harer, John L.},
     TITLE = {Computational topology},
      NOTE = {An introduction},
 PUBLISHER = {American Mathematical Society},
   ADDRESS = {Providence, RI},
      YEAR = {2010},
     PAGES = {xii+241},
      ISBN = {978-0-8218-4925-5},
   MRCLASS = {00-02 (05C10 52-02 55-02 57-02 65D18 68U05)},
  MRNUMBER = {2572029 (2011e:00001)},
MRREVIEWER = {Andrzej Kozlowski},
}

@misc{talk:Lieutier,
author = {Lieutier, A.},
title = {Talk: Persistent Harmonic Forms},
note   = "URL: \url{https://project.inria.fr/gudhi/files/2014/10/Persistent-Harmonic-Forms.pdf}. 
            Last visited on 2021/05/28",
organization = {INRIA},
date = {Oct, 2014},
urldate = {}
}

@article{Chen-et-al-2021,
title = {Evolutionary de {R}ham-{H}odge method},
journal = {Discrete \& Continuous Dynamical Systems - B},
volume = {26},
number = {7},
pages = {3785-3821},
year = {2021},
author = {Jiahui Chen and  Rundong Zhao and Yiying Tong and Guo-Wei Wei},
}

@inproceedings{Dey-2010,
  author    = {Tamal K. Dey and
               Anil N. Hirani and
               Bala Krishnamoorthy},
  editor    = {Leonard J. Schulman},
  title     = {Optimal homologous cycles, total unimodularity, and linear programming},
  booktitle = {Proceedings of the 42nd {ACM} Symposium on Theory of Computing, {STOC}
               2010, Cambridge, Massachusetts, USA, 5-8 June 2010},
  pages     = {221--230},
  publisher = {{ACM}},
  year      = {2010},
  url       = {https://doi.org/10.1145/1806689.1806721},
  doi       = {10.1145/1806689.1806721},
  bibsource = {dblp computer science bibliography, https://dblp.org}
}

@article{lockwood2014, 
    title={Topological Features In Cancer Gene Expression Data}, DOI={10.1142/9789814644730_0012}, 
    journal={Biocomputing 2015}, 
    author={Lockwood, S. and Krishnamoorthy, B.}, 
    year={2014}
}

@article{Memoli-et-al,
	doi = {10.1137/21m1435471},
  
	url = {https://doi.org/10.1137%2F21m1435471},
  
	year = 2022,
	month = {jun},
  
	publisher = {Society for Industrial {\&} Applied Mathematics ({SIAM})},
  
	volume = {4},
  
	number = {2},
  
	pages = {858--884},
  
	author = {Facundo Memoli and Zhengchao Wan and Yusu Wang},
  
	title = {Persistent Laplacians: Properties, Algorithms and Implications},
  
	journal = {{SIAM} Journal on Mathematics of Data Science}
}

@misc{Ghrist-Henselman,
  doi = {10.48550/ARXIV.2112.04927},
  
  url = {https://arxiv.org/abs/2112.04927},
  
  author = {Ghrist, Robert and Henselman-Petrusek, Gregory},
  
  title = {Saecular persistence},
  
  publisher = {arXiv},
  
  year = {2021},
  
  copyright = {Creative Commons Attribution 4.0 International}
}

@article{Gurnari-et-al,
	author = {Gurnari, Davide and Guzm{\'a}n-S{\'a}enz, Aldo and Utro, Filippo and Bose, Aritra and Basu, Saugata and Parida, Laxmi},
	date = {2025/11/06},
	doi = {10.1038/s41598-025-12189-y},
	id = {Gurnari2025},
	isbn = {2045-2322},
	journal = {Scientific Reports},
	number = {1},
	pages = {38836},
	title = {Probing omics data via harmonic persistent homology},
	url = {https://doi.org/10.1038/s41598-025-12189-y},
	volume = {15},
	year = {2025}}

@incollection{hou2024,
  author       = {Tao Hou and
                  Salman Parsa and
                  Bei Wang},
  editor       = {Oswin Aichholzer and
                  Haitao Wang},
  title        = {Tracking the Persistence of Harmonic Chains: Barcode and Stability},
  booktitle    = {41st {I}nternational {S}ymposium on {C}omputational {G}eometry, {S}o{C}{G} 2025,
                  June 23-27, 2025, Kanazawa, Japan},
  series       = {LIPIcs},
  volume       = {332},
  pages        = {58:1--58:16},
  publisher    = {Schloss Dagstuhl - Leibniz-Zentrum f{\"{u}}r Informatik},
  year         = {2025},
  url          = {https://doi.org/10.4230/LIPIcs.SoCG.2025.58},
  doi          = {10.4230/LIPICS.SOCG.2025.58},
  bibsource    = {dblp computer science bibliography, https://dblp.org}
}

@misc{gulen2025grassmannianpersistencediagramsspecial,
      title={Grassmannian Persistence Diagrams: Special Properties in the 1-Parameter Setting}, 
      author={Aziz Burak Gülen and Facundo Mémoli and Zhengchao Wan},
      year={2025},
      eprint={2504.06077},
      archivePrefix={arXiv},
      primaryClass={math.CO},
      url={https://arxiv.org/abs/2504.06077}, 
}

@article{gyurik-et-al,
   title={Provable Quantum Speedups for Computing Persistence in Topological Data Analysis},
   volume={7},
   ISSN={2691-3399},
   url={http://dx.doi.org/10.1103/gvys-hl8h},
   DOI={10.1103/gvys-hl8h},
   number={2},
   journal={PRX Quantum},
   publisher={American Physical Society (APS)},
   author={Gyurik, Casper and Schmidhuber, Alexander and King, Robbie and Dunjko, Vedran and Hayakawa, Ryu},
   year={2026},
   month=June}

@book {Lyons-Peres-book,
    AUTHOR = {Lyons, Russell and Peres, Yuval},
     TITLE = {Probability on trees and networks},
    SERIES = {Cambridge Series in Statistical and Probabilistic Mathematics},
    VOLUME = {42},
 PUBLISHER = {Cambridge University Press, New York},
      YEAR = {2016},
     PAGES = {xv+699},
      ISBN = {978-1-107-16015-6},
   MRCLASS = {60C05 (05C05 05C81 28A80 60J10 60J80 60K35 82B41)},
  MRNUMBER = {3616205},
MRREVIEWER = {Laurent\ Miclo},
       DOI = {10.1017/9781316672815},
       URL = {https://doi.org/10.1017/9781316672815},
}

@book {Doyle-Snell-book,
    AUTHOR = {Doyle, Peter G. and Snell, J. Laurie},
     TITLE = {Random walks and electric networks},
    SERIES = {Carus Mathematical Monographs},
    VOLUME = {22},
 PUBLISHER = {Mathematical Association of America, Washington, DC},
      YEAR = {1984},
     PAGES = {xiv+159},
      ISBN = {0-88385-024-9},
   MRCLASS = {94C05 (60J15 94C15)},
  MRNUMBER = {920811},
MRREVIEWER = {H.\ Kesten},
}

@article{Hayakawa-et-al-2026,
  author       = {Ryu Hayakawa and
                  Kuo{-}Chin Chen and
                  Min{-}Hsiu Hsieh},
  title        = {Quantum Walks on Simplicial Complexes and Harmonic Homology: Application
                  to Topological Data Analysis with Superpolynomial Speedups},
  journal      = {Quantum},
  volume       = {10},
  pages        = {2138},
  year         = {2026},
  url          = {https://doi.org/10.22331/q-2026-06-15-2138},
  doi          = {10.22331/Q-2026-06-15-2138},
  bibsource    = {dblp computer science bibliography, https://dblp.org}
}

\end{document}